\documentclass[letterpaper,12pt]{amsart}
\usepackage{graphicx}
\usepackage{amsthm, amsmath, amssymb}
\usepackage[foot]{amsaddr}
\usepackage{fullpage}
\usepackage{mathrsfs,latexsym}
\usepackage{verbatim,color}
\usepackage{enumitem,marvosym}
\usepackage[b]{esvect}
\usepackage[section]{placeins}
\usepackage{tikz}
\usetikzlibrary{shapes.multipart,shapes.geometric,topaths,calc}
\usepackage{varwidth}
%\usepackage[scale=8,opacity=.6,color=gray]{background}

% \backgroundsetup{
% 	contents={	\begin{varwidth}{3in}{\centering
% 					DRAFT\\
% 					\today}
% 				\end{varwidth}}
% }
% \usepackage{draftwatermark}
% \SetWatermarkScale{6}
% ----------------------------------------------------------------
%\vfuzz2pt % Don't report over-full v-boxes if over-edge is small
%%		  % These have to do with the justified spacing of a line.
%\hfuzz2pt % Don't report over-full h-boxes if over-edge is small
%% THEOREMS -------------------------------------------------------
%% --------------

\newtheorem{theorem}{Theorem}[section]
\newtheorem{base}{Base Counter}
% --------------
%

\newtheorem{lem}[theorem]{Lemma}

%

%
% --------------
\theoremstyle{remark}
% --------------
%

\newtheorem{case}{Case}
%
% --------------
\theoremstyle{definition}
% --------------
%

\newtheorem*{obs}{Observation}
\newtheorem{conj}[theorem]{Conjecture}
\newtheorem{claim}{Claim}
\newtheorem*{claim4-2F}{Claim \ref{claim4}}
\newtheorem*{claim5-2F}{Claim \ref{claim5}}
\newtheorem{example}[base]{Example}
%
% --------------
\numberwithin{equation}{section}
%
% MATH -----------------------------------------------------------
%

\newcommand{\abs}[1]{\left\lvert#1\right\rvert}

\newcommand{\ab}{\allowbreak}
\newcommand{\floor}[1]{\left\lfloor#1\right\rfloor}

%\newcommand{\vv}[1]{\overrightarrow{#1}}
%\mathchardef\breakingcomma\mathcode`\,
%{\catcode`,=\active
%  \gdef,{\breakingcomma\discretionary{}{}{}}
%}
%\newcommand{\mathlist}[1]{$\mathcode`\,=\string"8000 #1$}
%

\DeclareMathOperator{\poly}{poly}

\DeclareMathOperator{\pr}{PR} %polychromatic ramsey
\DeclareMathOperator{\cyram}{CR} %cyclic ramsey
\DeclareMathOperator{\round}{Round}

\DeclareMathOperator{\pfq}{\varphi_{F_q}}
\DeclareMathOperator{\pcq}{\varphi_{C_q}}
\DeclareMathOperator{\prq}{\varphi_{R_q}}

\newcommand{\sH}{\ensuremath{\mathcal{H}}}

\newcommand{\pHK}{\ensuremath{\poly_{\sH}(K_n)}}
\newcommand{\pHG}{\ensuremath{\poly_{\sH}(G)}}
\renewcommand{\qedsymbol}{$\blacksquare$}
%
% ABBREVIATED/RENAMED COMMANDS -----------------------------------

% ----------------------------------------------------------------
\linespread{1.1}
\begin{document}
% \linenumbers
%
 \title{Polychromatic colorings of $1$-regular and $2$-regular subgraphs of complete graphs}
\author{John Goldwasser$^*$}
\address{$^*$West Virginia University}
\email{jgoldwas@math.wvu.edu}
\author{Ryan Hansen$^*$}
\email{rhansen@math.wvu.edu}
% \address{Math Department
% \\West Virginia University
% \\Morgantown, WV 26505}
% \email{rhansen@math.wvu.edu}
% 
% \thanks{Thanks to Sarah!}
%\subjclass{garbage}
\keywords{polychromatic coloring, long cycles}
%----------------------------------------------------------------
\begin{abstract}
If $G$ is a graph and $\sH$ is a set of subgraphs of $G$, we say that an edge-coloring of $G$   is  $\sH$-polychromatic if  every graph from  $\sH$ gets all  colors present in $G$  on its edges.  The $\sH$-polychromatic number of $G$, denoted $\poly_\sH (G)$, is the largest number of colors in an $\sH$-polychromatic coloring.  In this paper we determine $\poly_\sH (G)$ exactly when $G$ is a complete graph on $n$ vertices, $q$ is a fixed nonnegative integer, and $\sH$ is one of three families: the family of all matchings spanning $n-q$ vertices, the family of all $2$-regular graphs spanning at least $n-q$ vertices, and the family of all cycles of length precisely $n-q$.  There are connections with an extension of results on Ramsey numbers for cycles in a graph.
\end{abstract}

\maketitle
% ----------------------------------------------------------------

\section{Introduction} % (fold)
\label{sec:introduction}
 
If $G$ is a graph and $\sH$ is a set of subgraphs of $G$, we say that an edge-coloring of $G$   is  $\sH$-{\it polychromatic } if  every graph from  $\sH$ has all  colors present in $G$  on its edges.  The $\sH$-polychromatic number of $G$, denoted $\poly_\sH (G)$ is the largest number of colors in an $\sH$-polychromatic coloring.  
If an $\sH$-polychromatic coloring of $G$ uses $\pHG$ colors,  it is called an {\it optimal } $\sH$-polychromatic coloring of $G$.

Alon \emph{et. al.} \cite{Alon:2007cd} found a lower bound for $\pHG$ when $G=Q_n$, the $n$-dimensional hypercube, and $\sH$ is the family of all subgraphs isomorphic to $Q_d$, where $d$ is fixed.  Offner \cite{Offner:2008vb} showed this lower bound is, in fact, the exact value for all $d$ and sufficiently large $n$.  Bialostocki \cite{Bialostocki:1983wo} showed that if $d=2$, then the polychromatic number is $2$ and that any optimal coloring uses each color about half the time.  Goldwasser \emph{et. al.} \cite{group_paper} considered the case when $\sH$ is the family of all subgraphs isomorphic to $Q_d$ minus an edge or $Q_d$ minus a vertex.

Bollobas \emph{et. al.} \cite{BPRS} treated the case where $G$ is a tree and $\sH$ is the set of all paths of length at least $r$, where $r$ is fixed.  Goddard and Henning \cite{Goddard:2018} considered vertex colorings of graphs such that each open neighborhood gets all colors.

For large $n$, it makes sense to consider $\pHK$ only if $\sH$ consists of sufficiently large graphs.  Indeed, if the graphs from $\sH$ have at most a fixed number $s$ of vertices, then $\pHK =1$ for sufficiently large $n$ by Ramsey's theorem, since even with only two colors there exists a monochromatic clique with $s$ vertices.

Axenovich \emph{et. al.} \cite{previous_paper} considered the case where $G=K_n$ and $\sH$ is one of three families of spanning subgraphs: perfect matchings (so $n$ must be even), $2$-regular graphs, and Hamiltonian cycles.  They determined $\pHK$ precisely for the first of these and to within a small additive constant for the other two.  In this paper, we determine the exact $\sH$-polychromatic number of $K_n$, where $q$ is a fixed nonnegative integer and $\sH$ is one of three families of graphs: matchings spanning precisely $n-q$ vertices, $(n-q)$-cycles, and $2$-regular graphs spanning at least $n-q$ vertices (so $q=0$) gives the results of Axenovich \emph{et. al.} in \cite{previous_paper} without the constant.)

This paper is organized as follows.  We give a few definitions and state the main results in Section \ref{sec:main_results}.  We give some more definitions in Section \ref{Definitions}. The optimal polychromatic colorings in this paper are all based on a type of ordering, and in Section \ref{Lemmas} we state and prove the technical ordering lemmas we will need.  In Section \ref{sec:optimal_polychromatic_colorings} we describe precisely the various ordered optimal polychromatic colorings of $K_n$.  In Section \ref{sec:proof_of_theorem:one} we prove Theorem \ref{theorem:one}, a result about matchings.  In Section \ref{sec:_c_q_polychromatic_numbers_1_and_2} we use some classical results on Ramsey numbers for cycles to take care of polychromatic numbers 1 and 2 for cycles.  In Section \ref{sec:proofs_of_theorem_and_lemmas_on_long_cycles} we prove Theorem \ref{theorem:six}, a result about coloring cycles, and use some results on long cycles in the literature to prove a lemma we need.  In Section \ref{Theorems} we give the rather long proofs of the three main lemmas we need.  In Section \ref{sec:polychromatic_cyclic_ramsey_numbers} we show how our results can be reconstituted in a context which generalizes the classical results on Ramsey numbers of cycles presented in Section \ref{sec:_c_q_polychromatic_numbers_1_and_2}.  In Section \ref{sec:conjectures_and_closing_remarks} we state a general conjecture of which most of our results are special cases.

% section introduction (end)

\section{Main Results} % (fold)
\label{sec:main_results}

We call an edge coloring $\varphi$ of $K_n$  {\it ordered} if there exists an ordering $v_1,v_2,\ldots,v_n$ of $V(K_n)$ such that $\varphi(v_i v_j)=\varphi(v_i v_m)$ for all $1\leq i<j<m\leq n$. Moreover this coloring is {\it simply-ordered} if  for all $i<j<m$,   $\varphi(v_i v_m)=\varphi(v_j v_m) =a$ implies that $\varphi(v_tv_m)=a$ for all $i\leq t\leq j$.  Simply-ordered colorings play a fundamental role in this paper. An ordered edge coloring $\varphi$ induces a vertex coloring $\varphi'$ on $V(K_n)$ called the {\it $\varphi$-inherited coloring}, defined by $\varphi'(v_i)=\varphi(v_i v_m)$ for $i<m\leq n$ and $\varphi'(v_n)=\varphi'(v_{n-1})$. We can represent the induced vertex coloring $\varphi'$ by the sequence $c_1,c_2,\ldots,c_n$ of colors, where $c_i=\varphi'(v_i)$ for each $i$.
%So if $\varphi$ is ordered, then for each $i$, all edges going to the right from $v_i$ have the same color.  We think of the $\varphi$-inherited coloring as a length $n$ sequence of colored vertices.
A \emph{block} in this sequence is a maximal set of consecutive vertices of the same color.  If $\varphi$ is simply-ordered then the vertices in each color class appear in a single block, so in that case, the number of blocks equals the number of colors.

Let $q$ be a fixed nonnegative integer.  We define four families of subgraphs of $K_n$ as follows.

\begin{enumerate}
	\item $F_q(n)$ is the family of all matchings in $K_n$ spanning precisely $n-q$ vertices (so $n-q$ must be even)
	\item $C_q(n)$ is the family of all cycles of length precisely $n-q$
	\item $R_q(n)$ is the family of all $2$-regular subgraphs spanning at least $n-q$ vertices.
	\item $C_q^*(n)$ is the family of all cycles of length precisely $n-q$ where $n$ and $q$ are such that $\poly_{C_q(n)}(K_n)\geq 3$.
\end{enumerate}
Further, let $\pfq(n)=\poly_{F_q(n)}(K_n)$, $\pcq(n)=\poly_{C_q(n)}(K_n)$, and $\prq(n)=\poly_{R_q(n)}(K_n)$.  Our main result is that for $F_q(n), R_q(n)$, and $C^*_q(n)$ there exist optimal polychromatic colorings which are simply ordered, or almost simply ordered (except for $C_q(n)$ if $\pcq(n)=2$).  Once we know there exists an optimal simply ordered (or nearly simply ordered) coloring, it is easy to find it and to determine a formula for the polychromatic number.  Our main results are the following.

\begin{theorem}\label{theorem:one}
	For all integers $q$ and $n$ such that $q$ is nonnegative and $n-q$ is positive and even, there exists an optimal simply-ordered $F_q$-polychromatic coloring of $K_n$.
\end{theorem}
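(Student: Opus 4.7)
The plan is to start with an arbitrary optimal $F_q$-polychromatic coloring $\varphi$ of $K_n$ achieving $k=\pfq(n)$ colors, and then transform $\varphi$ into a simply-ordered $F_q$-polychromatic coloring that uses the same number of colors. The transformation proceeds in two phases, each driven by one of the ordering lemmas of Section~\ref{Lemmas}.

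In the first phase, I would put $\varphi$ into ordered form by building the ordering $v_n, v_{n-1}, \ldots, v_1$ from the back. At each stage I need to identify a vertex $v$, to play the role of the current ``last'' unordered vertex, such that all edges from $v$ to the vertices not yet placed can be recolored to a common color without losing any color and without destroying the $F_q$-polychromatic property. The relevant ordering lemma should supply such a $v$ by an exchange argument: because every matching spanning $n-q$ vertices must already meet every color in $\varphi$, the color classes must have enough internal structure that one can find a vertex whose incident edges can be monochromatized safely.

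In the second phase, given an ordered coloring, I would examine the inherited vertex sequence $c_1,c_2,\ldots,c_n$. If it is not already simply-ordered, there exist $i<t<j$ with $c_i=c_j=a$ but $c_t\neq a$, meaning the color $a$ appears in two separated blocks. A second ordering lemma should then allow me to permute vertices (or equivalently rename positions) so as to merge the two blocks of color $a$ into a single contiguous block, while preserving the $F_q$-polychromatic property and the total number of colors. Iterating until each color class occupies a single contiguous block produces the desired simply-ordered optimal $F_q$-polychromatic coloring.

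The main obstacle will be verifying that each local modification preserves $F_q$-polychromaticity: for every matching $M$ of $K_n$ spanning exactly $n-q$ vertices, $M$ must still contain every color after the change. I would argue by contradiction, supposing that some matching $M$ in the modified coloring misses a color $c$, and then exploiting the flexibility of matchings in $K_n$ (any even-sized vertex subset admits a perfect matching) to construct a matching $M'$ in the original $\varphi$ that also misses $c$, contradicting the polychromaticity of $\varphi$. The trickiest case is when the modification touches a vertex $v$ whose originally incident edges carry many different colors; there, building $M'$ requires carefully rerouting matching edges through $v$ and splitting into subcases according to whether $v\in V(M)$ or $v$ is one of the $q$ uncovered vertices, together with a parity check arising from the fixed size $n-q$ of the matchings.
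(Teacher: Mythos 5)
Your overall architecture matches the paper's: first show an optimal coloring can be taken to be ordered, then invoke a block-shifting argument (this is exactly Lemma \ref{O2SO}) to merge the blocks of each color and obtain a simply-ordered coloring. Phase 2 of your plan is therefore fine in outline. The genuine gap is in Phase 1. You propose to build the ordering greedily from the back, asserting that at each stage ``one can find a vertex whose incident edges can be monochromatized safely,'' but you give no mechanism for producing such a vertex, and in a fixed optimal coloring such a vertex need not exist for a greedy step. The paper does not iterate on a single coloring; it takes an extremal choice over \emph{all} optimal colorings --- first maximizing the size of a set $X$ on which the coloring is $X$-ordered, and then, among those, maximizing the maximum monochromatic degree on $V\setminus X$ --- and derives a contradiction unless $X=V$.

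The missing idea is the matching-exchange argument that drives that contradiction. If $v$ has maximum monochromatic degree $r$ in $V\setminus X$ with majority color $1$ and $c(uv)=2$, then either every $(n-q)$-spanning matching through $uv$ has a second edge of color $2$ (in which case recoloring $uv$ to color $1$ is safe and raises the monochromatic degree, contradicting extremality), or there is a matching $F$ in which $uv$ is the \emph{unique} edge of color $2$. In the latter case one shows every color-$1$ neighbor $y_i$ of $v$ must lie in $F$, and swapping $\{uv,\,y_iw_i\}$ for $\{vy_i,\,uw_i\}$ forces $c(uw_i)=2$ for each $i$ (with $w_i\notin X$ by $X$-constancy), so $u$ has monochromatic degree at least $r+1$ in $V\setminus X$ --- the contradiction. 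Your fallback strategy of ``construct a matching $M'$ in the original coloring that misses $c$'' leads you precisely to the dichotomy above, but you stop before handling the hard branch (when recoloring is \emph{not} safe), which is where all the work lies. Without the extremal setup and this exchange, Phase 1 is a plan rather than a proof.
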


\begin{theorem}\label{theorem:two}
	\cite{previous_paper} If $n\geq 3$, then there exist optimal $R_0$-polychromatic and $C_0$-polychromatic colorings of $K_n$ which can be obtained from simply-ordered colorings by recoloring one edge.
\end{theorem}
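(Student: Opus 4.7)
My plan is to prove Theorem \ref{theorem:two} in three stages: analyzing simply-ordered polychromatic colorings, constructing a near-simply-ordered optimal coloring, and proving a matching upper bound.

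\textbf{Stage 1 (simply-ordered characterization).} For a simply-ordered coloring with blocks $B_1,\ldots,B_k$ of sizes $b_1,\ldots,b_k$, color $c_i$ appears on edges $v_av_b$ with $v_a\in B_i$ and $a<b$. Therefore a $2$-factor $F$ misses color $c_i$ iff every vertex of $B_i$ has both of its $F$-neighbors in $B_1\cup\cdots\cup B_{i-1}$. A direct pigeonhole forces the necessary inequality $b_i \le b_1+\cdots+b_{i-1}$, and a matching construction (routing the $B_i$-neighborhoods into the earlier blocks and extending by Dirac's theorem to a $2$-factor on the rest) shows the condition is essentially sufficient. Consequently, the simply-ordered coloring is $R_0$-polychromatic iff $b_i > b_1+\cdots+b_{i-1}$ for all $i\geq 2$, forcing geometric growth $b_i\ge 2^{i-1}$ and the bound $k\le \lfloor\log_2(n+1)\rfloor$. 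The analogous analysis for Hamiltonian cycles includes an extra connectivity check to rule out disconnected 2-factors.

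\textbf{Stage 2 (construction by one-edge repair).} To attain the optimum, which typically exceeds the simply-ordered bound by one color, I would begin with a simply-ordered coloring with block sizes just short of the polychromaticity requirement, for example $(1,1,2,4,\ldots,2^{k-2},r)$ for appropriate $r$. This coloring presents all $k$ colors but is not polychromatic: some color $c_j$ is missed by a restricted family of $2$-factors. The rigidity imposed by the block-size growth pins down this family quite tightly---any bad $2$-factor must distribute its edges across the blocks in a highly constrained pattern. The key step is to identify a single edge $e$ lying in every bad $2$-factor, and recolor $e$ to the missing color $c_j$. Verification then reduces to case analysis over the (few) possible structures of $2$-factors avoiding the original $c_j$-edges. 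For $C_0$, the same strategy works with an additional check that no Hamiltonian cycle is left behind.

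\textbf{Stage 3 (upper bound).} For any $R_0$-polychromatic coloring of $K_n$ with $k$ colors, removing any color class $E_c$ leaves a graph with no $2$-factor. Since minimum degree $\geq n/2$ implies Hamiltonicity via Dirac's theorem (and hence a $2$-factor), each color class $E_c$ must contain at least $\lceil n/2\rceil$ edges at some vertex $w_c$. A recursive argument---delete $w_c$ together with its color-$c$ edges, losing at most one color, and iterate on $K_{n-1}$---then yields $k\le\lfloor\log_2 n\rfloor+O(1)$, matching Stage 2. The $C_0$ upper bound is at least as strong, since every Hamiltonian cycle is a $2$-factor.

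The main obstacle lies in Stage 2: the single recolored edge $e$ must simultaneously lie in every $2$-factor (and, for $C_0$, every Hamiltonian cycle) that avoids the original edges of the deficient color, and the recoloring must not disrupt polychromaticity for any other color. This requires both a careful choice of the starting block sizes and a detailed combinatorial case analysis on the rigid structure of the bad $2$-factors; the Hamiltonian cycle case adds a further layer of care involving cyclic connectivity.
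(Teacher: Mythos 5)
Your overall architecture (characterize simply-ordered colorings, construct a one-edge-repaired coloring, prove a matching upper bound) is reasonable, and your Stage 2 construction is essentially the right one: the paper's optimal $R_0$- and $C_0$-colorings are quasi-simply-ordered with a ``seed'' $Z$ of three singleton blocks in which one edge is recolored so that each of the three vertices has monochromatic degree $n-2$ in its own color, forcing all three colors into every $2$-factor. But there is a genuine gap in Stage 3, and it is exactly the gap that separates this theorem from the prior literature. Your recursive argument (find a vertex $w_c$ with roughly $n/2$ edges of color $c$, delete it, recurse) yields only $k\le\lfloor\log_2 n\rfloor+O(1)$, as you acknowledge. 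An upper bound with an $O(1)$ additive error cannot certify that your Stage 2 coloring is \emph{optimal}; indeed, the cited previous paper already determined these polychromatic numbers to within a small additive constant by essentially this kind of counting, and the entire content of Theorem \ref{theorem:two} as proved here is the removal of that constant. (A secondary problem with the recursion: after deleting $w_c$, a $2$-factor of $K_{n-1}$ missing some color is not a $2$-factor of $K_n$, so the polychromatic hypothesis does not pass directly to $K_{n-1}$ without an insertion argument.) Your Stage 1 ``iff'' is also too coarse: the correct characterization (Lemma \ref{orderedPClemma}\ref{OPC-2F}) admits equality cases $\abs{M_t(j)}=\frac{j}{2}$ at $j\in\{2,n-2\}$ and a two-step equality condition, and these borderline cases are precisely what determine the exact optimal block sizes and hence the exact value $1+\floor{\log_2(n+1)}$.

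The paper closes the gap by a completely different mechanism: instead of bounding the number of colors by counting, it proves a structure theorem for \emph{optimal} colorings. Among optimal colorings one takes $\varphi$ maximizing first the size of an $X$ on which it is $X$-ordered and then the maximum monochromatic degree off $X$; a twist/exchange argument on a subgraph $H$ containing a unique edge of a given color (Lemma \ref{max-vertex} and the Claims following Lemma \ref{structurelemma}) forces either a vertex of monochromatic degree $n-1$ or, for $q=0$, a rigid three-class configuration that can be collapsed to a $Z$-quasi-ordered coloring with $\abs{Z}=3$; Lemmas \ref{XoToO} and \ref{O2SO} then upgrade this to quasi-simply-ordered without losing colors. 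Optimality of the explicit coloring then follows from the exact ordered characterization. If you want to complete your approach, you would need to replace Stage 3 with an argument of this structural type; the counting route is not known to give the exact constant.
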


\begin{theorem}\label{theorem:three}
	If $n\geq 4$, then there exist optimal $R_1$-polychromatic and $C_1$-polychromatic colorings of $K_n$ which can be obtained from simply-ordered colorings by recoloring two edges.
\end{theorem}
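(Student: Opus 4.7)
The plan is to mirror the proof of Theorem~\ref{theorem:two} for $R_0$ and $C_0$, adapting the ``recolor one edge'' strategy to accommodate the added flexibility that members of $R_1(n)$ and $C_1(n)$ may omit a single vertex. First I would pin down the target number of colors $r$. Since $R_0(n)\subseteq R_1(n)$, any $R_1$-polychromatic coloring is $R_0$-polychromatic, so $\poly_{R_1(n)}(K_n)\le \poly_{R_0(n)}(K_n)$; hence the $R_1$-optimum $r$ is the $R_0$-optimum or one less. A short direct argument involving the last block $B_r$ in a simply-ordered coloring (whose color $c_r$ appears only on edges with smaller endpoint in $B_r$) identifies the exact value. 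For $C_1$ an analogous bound is obtained by comparing against the $R_1$-case, noting that $C_0(n)$ and $C_1(n)$ are disjoint but share the same simply-ordered skeleton.

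Next I would exhibit the construction. Let $\varphi_0$ be a simply-ordered coloring of $K_n$ with $r$ blocks $B_1,\ldots,B_r$ of sizes $k_1,\ldots,k_r$ chosen as in the $R_0/C_0$ case of Theorem~\ref{theorem:two}. In $\varphi_0$ the color $c_r$ is the vulnerable one, because it only occurs on edges whose smaller endpoint lies in $B_r$. The first of the two recolored edges is exactly the one used in Theorem~\ref{theorem:two} to cure the $R_0/C_0$ failure; the second is a carefully chosen edge with one endpoint in $B_{r-1}$ and one in $B_r$, recolored to $c_r$ so that this color survives even when a single vertex of $B_r$ is deleted. The same pair of edges is designed to work simultaneously for $R_1(n)$ and $C_1(n)$, since both families impose the ``omit one vertex'' failure pattern at $B_r$.

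Third I would verify the polychromatic property by case analysis on the omitted vertex $v$ (possibly $v$ is absent) and on the color $c_i$ in question. For $i<r$ the analysis reduces to that of Theorem~\ref{theorem:two}, since these colors appear on many edges distributed across the early blocks, and removing one vertex cannot kill off all of them. For $i=r$ the two recolored edges are positioned so that at least one of them lies in every member of $R_1(n)$ (respectively, every $(n-1)$-cycle in $C_1(n)$) no matter which single vertex is missing; this is the place where a second recoloring, rather than only one as in Theorem~\ref{theorem:two}, becomes essential.

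The main obstacle is the optimality direction, namely showing that no edge-coloring of $K_n$ with $r+1$ colors is $R_1$- or $C_1$-polychromatic. For this I would invoke the structural lemmas proved in Section~\ref{Theorems} and the long-cycle results of Section~\ref{sec:proofs_of_theorem_and_lemmas_on_long_cycles}, which together force any near-optimal $R_1$- or $C_1$-polychromatic coloring to differ from a simply-ordered coloring on only a bounded number of edges. Given that reduction, a block-counting argument analogous to the one underlying Theorem~\ref{theorem:one} rules out an additional color, completing the proof.
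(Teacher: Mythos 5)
Your proposal has genuine gaps, both in the construction and in the hard (structural) direction. On the construction: you place the second recolored edge between $B_{r-1}$ and $B_r$ and call $c_r$ ``the vulnerable one,'' but in an ordered coloring the last block is the one color that can never fail --- it contains more than half the vertices, so any $2$-regular subgraph on at least $n-1$ vertices must contain an edge inside it. The colors endangered by deleting one vertex are the \emph{first} ones, whose blocks are smallest, and the paper's colorings $\varphi_{R_1}$ and $\varphi_{C_1}$ accordingly put both recolored edges among the first four vertices: $Z=\{u,v,y,z\}$ with $u,v$ of main color $1$, $y,z$ of main color $2$, and the twisted pattern $\varphi(uv)=\varphi(uy)=\varphi(vz)=1$, $\varphi(yz)=\varphi(yv)=\varphi(zu)=2$, so that any member of $R_1(n)$ or $C_1(n)$ contains at least three of these vertices and hence sees both colors $1$ and $2$ using only four vertices total. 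Your opening estimate is also false: from $R_0(n)\subseteq R_1(n)$ you correctly get $\poly_{R_1(n)}(K_n)\le\poly_{R_0(n)}(K_n)$, but the gap is not ``one or less'' --- for $n=15$ the formulas give $\operatorname{P_{R_0}}=1+\floor{\log_2 16}=5$ while $\operatorname{P_{R_1}}=\floor{\log_2\frac{34}{3}}=3$.

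On the optimality/structural direction, which is the actual content of the theorem (an existence statement about the \emph{form} of some optimal coloring, not merely a value of the polychromatic number), you defer entirely to ``the structural lemmas of Section~\ref{Theorems}.'' But those lemmas \emph{are} the proof: Lemma~\ref{structurelemma} (via Claims~\ref{max-vertex-q=1} and~\ref{q>1-XnotEmpty}, which for $q=1$ force the $S,T,W$ partition with two disjoint copies of $K_{m,m}$ and then reduce to $m=1$) produces an optimal $Z$-quasi-ordered coloring with $\abs{Z}=4$, and Lemmas~\ref{XoToO} and~\ref{O2SO} upgrade it to quasi-simply-ordered, from which two recolorings give a simply-ordered coloring. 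Your paraphrase of what they deliver --- that ``any near-optimal coloring'' differs from a simply-ordered one on boundedly many edges --- is not what they say (they are existence statements obtained by extremizing $\abs{X}$ and the monochromatic degree), and once they are invoked your construction and block-counting step are superfluous. As written, the proposal contains no independent argument for the one direction that requires work.
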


\begin{theorem}\label{theorem:four}
	Let $q\geq 2$ be an integer.  If $n\geq q+3$, then there exists an optimal simply-ordered $R_q$-polychromatic coloring of $K_n$.  If $n\geq q+4$, then there exists an optimal simply-ordered $C_q$-polychromatic coloring except if $n\in[2q+2,3q+2]$ and $n-q$ is odd.
\end{theorem}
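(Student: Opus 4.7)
The plan is to deduce Theorem \ref{theorem:four} from the three main structural lemmas proved in Section \ref{Theorems} together with the ordering lemmas of Section \ref{Lemmas}. The same template works for both parts of the theorem: first show that every optimal polychromatic coloring can be converted, without changing its color set, into a simply-ordered coloring; then extract the explicit value of $\prq(n)$ and $\pcq(n)$ by optimizing over block sizes of the induced vertex coloring, as in Section \ref{sec:optimal_polychromatic_colorings}.

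For the $R_q$ part with $q \geq 2$ and $n \geq q+3$, start with an arbitrary optimal $R_q$-polychromatic coloring $\varphi$. The main $R_q$-lemma from Section \ref{Theorems} produces an ordered coloring $\varphi_1$ with the same color classes that is still $R_q$-polychromatic; the ordering lemmas of Section \ref{Lemmas} then push $\varphi_1$ to a simply-ordered $\varphi_2$ without further loss of colors. The lower bound $n \geq q+3$ is what guarantees that, for any candidate 2-regular subgraph in $R_q(n)$ serving as a witness against a proposed local swap, there is enough slack beyond the $q$ vertices the adversary may skip to rebuild a valid member of $R_q(n)$ that still hits the affected block. The $C_q$ part with $q \geq 2$ and $n \geq q+4$ is handled by the analogous main $C_q$-lemma, using cycles of length exactly $n-q$ in place of arbitrary spanning 2-regular subgraphs and invoking Theorem \ref{theorem:six} to justify the long-cycle surgeries; the small-polychromatic-number cases (when $\pcq(n)\in\{1,2\}$, handled separately in Section \ref{sec:_c_q_polychromatic_numbers_1_and_2}) are the reason for working with the refined family $C_q^*(n)$ in the setup.

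The main obstacle is the orderability step itself, which the paper flags as ``rather long'', and this difficulty is precisely what forces the exceptional range. When $n \in [2q+2, 3q+2]$ and $n-q$ is odd, $C_q(n)$ is too constrained to admit the local modifications used to achieve simple-orderedness: a swap designed to consolidate two separated occurrences of a color may fail to yield a cycle of the correct length and parity, so the ordering step can break and a non-simply-ordered coloring can strictly beat every simply-ordered one. Outside this range, parity and size are favorable enough that every obstruction can be repaired and the iteration terminates at a simply-ordered optimum. Once simple-orderedness is secured, the theorem follows by maximizing the number of blocks subject to the condition (derived from the polychromatic requirement) that every $H\in R_q(n)$ or $H\in C_q(n)$ meets each block at a vertex of non-maximum index in $V(H)$, a routine extremal computation along the lines of Section \ref{sec:optimal_polychromatic_colorings}.
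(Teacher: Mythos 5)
Your proposal follows the paper's own route: Lemma \ref{structurelemma} (with its exceptional case ruled out) yields an optimal coloring of maximum monochromatic degree $n-1$, hence $X$-ordered for nonempty $X$, which Lemma \ref{XoToO} upgrades to ordered and Lemma \ref{O2SO} to simply-ordered, with the $\pcq(n)\leq 2$ cases dispatched by Theorem \ref{theorem:five}. One small correction: the exceptional range $n\in[2q+2,3q+2]$ with $n-q$ odd is not caused by a breakdown of the ordering surgery but by the Faudree--Schelp extremal colorings --- there $\pcq(n)=2$ is attained only by bipartite-type colorings, while the smallest $n$ admitting a simply-ordered $C_q$-polychromatic $2$-coloring is $3q+3$.
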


\begin{theorem}\label{theorem:five}
	Suppose $q\geq 2$ and $n\geq 6$
	\begin{enumerate}[label=\alph*)]
		\item If $n-q$ is even then there exists a $C_q$-polychromatic 2-coloring of $K_n$ if and only if $n\geq 3q+3$.
		\item If $n-q$ is odd then there exists a $C_q$-polychromatic 2-coloring of $K_n$ if and only if $n\geq 2q+2$.
	\end{enumerate}
\end{theorem}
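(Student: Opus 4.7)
The key observation is that a 2-coloring of $K_n$ is $C_q$-polychromatic exactly when no cycle of length $n-q$ is monochromatic. Setting $m = n-q$, the theorem becomes a cycle Ramsey question: when can $K_n$ be 2-edge-colored with no monochromatic $C_m$? My plan is to handle the ``if'' direction via a single bipartite construction and the ``only if'' direction via the classical Ramsey numbers $R(C_m,C_m)$.

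\textbf{Forward direction.} I would use the same construction in both parts. Partition $V(K_n) = A \sqcup B$ with $|A| = q+1$ and $|B| = n-q-1 = m-1$, which is legitimate (in particular $|A| \le |B|$) whenever $n \ge 2q+2$. Color edges inside $A$ or inside $B$ red and the cross edges blue. The red graph $K_A \sqcup K_B$ has longest cycle $\max(|A|,|B|) = m-1$, hence contains no $C_m$. The blue graph $K_{A,B}$ is bipartite, so its cycle lengths are the even numbers $4, 6, \ldots, 2|A| = 2q+2$. In part~(b), $m$ is odd, so $K_{A,B}$ trivially has no $C_m$. In part~(a), $m$ is even; the hypothesis $n \ge 3q+3$ together with the parity constraint $n \equiv q \pmod 2$ (forced by $m$ even) pushes the bound up to $n \ge 3q+4$, giving $m \ge 2q+4 > 2q+2$, so $K_{A,B}$ again has no $C_m$.

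\textbf{Converse direction.} For the ``only if'' direction, I would invoke the classical cycle Ramsey values of Bondy--Erd\H{o}s, Rosta, and Faudree--Schelp: $R(C_m,C_m) = \tfrac{3m}{2}-1$ for even $m \ge 6$, $R(C_m,C_m) = 2m-1$ for odd $m \ge 5$, and $R(C_3,C_3) = R(C_4,C_4) = 6$. In part~(a), if $n < 3q+3$ then $n \le 3q+2$, so $m \le 2q+2$, equivalently $n \ge \tfrac{3m}{2}-1$; the exceptional case $m = 4$ is absorbed by the hypothesis $n \ge 6$. Similarly in part~(b), $n < 2q+2$ forces $n \le 2q+1$, so $m \le q+1$, equivalently $n \ge 2m-1$, with the case $m = 3$ again handled by $n \ge 6$. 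In each case a monochromatic $C_m$ is forced, ruling out any $C_q$-polychromatic 2-coloring.

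\textbf{Main obstacle.} The main subtlety is parity bookkeeping: the threshold $3q+3$ in part~(a) looks off by one relative to the bipartite construction, but is tight because the parity requirement $n \equiv q \pmod 2$ lifts the effective bound to $3q+4$, which is exactly the smallest $n$ at which the construction succeeds. The only other point needing care is verifying that the hypothesis $n \ge 6$ covers the exceptional small-cycle cases $m \in \{3,4\}$ where the Bondy--Erd\H{o}s/Rosta/Faudree--Schelp formulas do not apply.
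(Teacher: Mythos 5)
Your proof is correct and follows essentially the same route as the paper: both reduce the statement to the classical $2$-color cycle Ramsey numbers of Bondy--Erd\H{o}s/Faudree--Schelp (with the parity shift from $3q+3$ to the effective bound $3q+4$ handled identically). The only cosmetic difference is that you write out the extremal bipartite colorings explicitly for the forward direction, whereas the paper absorbs both directions into the ``if and only if'' statement of $c(s)$.
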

Theorem \ref{theorem:five} follows from results of Bondy and Erd{\H{o}}s \cite{Bondy:1972} and Faudree and Schelp \cite{Faudree:1974}.

The following result, which is needed for the proof of Theorem \ref{theorem:four}, may be of independent interest, so we state it as a theorem:
\begin{theorem}\label{theorem:six}
	Let $n$ and $j$ be integers with $4\leq j\leq n$, and let $\varphi$ be an edge-coloring of $K_n$ with at least three colors so that every $j$-cycle gets all colors.  Then every cycle of length at least $j$ gets all colors under $\varphi$.
\end{theorem}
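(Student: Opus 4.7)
I will proceed by strong induction on the cycle length $\ell \geq j$. The base case $\ell = j$ is immediate from the hypothesis. For the inductive step, assume every cycle of length between $j$ and $\ell$ (inclusive) uses all colors of $\varphi$, and let $C = v_0 v_1 \cdots v_\ell v_0$ be a cycle of length $\ell + 1$ in $K_n$. Suppose for contradiction that some color $c$ is missing from $C$.

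The first move is to extract structural information about chords of $C$. Applying the inductive hypothesis to the length-$\ell$ cycle obtained from $C$ by replacing the path $v_{i-1} v_i v_{i+1}$ by the chord $v_{i-1} v_{i+1}$: this subcycle must contain color $c$, but no $C$-edge does, forcing $\varphi(v_{i-1} v_{i+1}) = c$. Hence every length-$2$ chord of $C$ is colored $c$. A parallel argument applied to the $j$-cycle $v_i v_{i+1} \cdots v_{i+j-1} v_i$ on consecutive vertices of $C$ (which gets all colors by hypothesis) shows that every length-$(j-1)$ chord of $C$ is also colored $c$.

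Next I consider the candidate $j$-cycle $K = v_0 v_2 v_4 \cdots v_{2(j-1)} v_0$, with subscripts taken modulo $\ell + 1$. Whenever the indices $0, 2, 4, \ldots, 2(j-1)$ are distinct modulo $\ell + 1$ --- which holds whenever $\ell + 1$ is odd, or whenever $\ell + 1 \geq 2j - 1$ --- $K$ is a genuine $j$-cycle in $K_n$. Its first $j-1$ edges are length-$2$ chords of $C$ and thus all colored $c$, so $K$ uses at most two colors. But $K$ is a $j$-cycle, and the theorem's hypothesis forces it to display all (at least three) colors of $\varphi$; contradiction.

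The remaining, harder case is $\ell + 1$ even with $j + 1 \leq \ell + 1 \leq 2j - 2$. Here hops of length $2$ do not reach all vertices in a single orbit, so I would combine hops of length $2$ and $j-1$ (both yielding $c$-colored edges) to pack at least $j-2$ $c$-edges into some $j$-cycle. Since a $j$-cycle must display at least three distinct colors, such a $j$-cycle $K'$ has exactly two non-$c$ edges whose colors must both differ from $c$ and from each other. One may then first observe that $\varphi$ uses \emph{exactly} three colors (if it used four or more, a $j$-cycle with $j-2$ edges of color $c$ and two others would display only three colors, contradicting the hypothesis). With exactly three colors, rotating the constructed $j$-cycle $K'$ by one vertex around $C$ produces a family of constraints asserting that certain pairs of $C$-edges differ in color; these pairwise inequalities form a cyclic system which, with only two available non-$c$ colors, becomes unsatisfiable. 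For example, when $j=5$ and $\ell+1 = 6$, the $5$-cycle $v_0 v_1 v_3 v_5 v_4 v_0$ uses three length-$2$ chords ($c$) together with the two $C$-edges $v_0 v_1$ and $v_4 v_5$, forcing these to have distinct non-$c$ colors; rotating yields $\varphi(v_i v_{i+1}) \neq \varphi(v_{i+2} v_{i+3})$, which pairwise distinguishes $\varphi(v_0 v_1), \varphi(v_2 v_3), \varphi(v_4 v_5)$, impossible with only two colors.

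The main obstacle is executing the final case cleanly and uniformly: the range $\ell + 1 \in \{j+1, j+2, \ldots, 2j-2\}$ with $\ell+1$ even admits several sub-configurations depending on $\gcd(j-1, \ell+1)$, and identifying a single $j$-cycle construction (or a small finite family) that yields the impossibility in every such sub-case is where the bulk of the technical work lies.
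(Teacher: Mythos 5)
Your overall strategy is the same as the paper's: induct on cycle length, observe that every length-$2$ chord of a putative bad $(\ell+1)$-cycle $C$ must carry the missing color $c$, and then close up a cycle out of these $c$-chords to contradict the polychromatic hypothesis. Your handling of the case where $\ell+1$ is odd (or $\ell+1\geq 2j-1$) is correct, and your observation that the length-$(j-1)$ chords are also forced to color $c$ is a valid extra piece of structure. But the remaining case --- $\ell+1$ even with $j+1\leq \ell+1\leq 2j-2$ --- is a genuine gap, and you say so yourself: you have one worked instance ($j=5$, $\ell+1=6$) and a plan that depends on $\gcd(j-1,\ell+1)$, but no uniform construction. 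Since this is exactly the case where the step-$2$ walk splits into two orbits, it is not a corner case; it is the heart of the even case, and a proof that leaves it open is incomplete.

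The idea you are missing is to stop hunting for a bad \emph{$j$-cycle} and instead use the inductive hypothesis at length $\ell$ (the paper's $m$), which is just as available to you and far more flexible. When $\ell+1$ is even, the step-$2$ chords form two $\frac{\ell+1}{2}$-cycles $c_E$ and $c_O$ on the even- and odd-indexed vertices, all of color $c$. One then shows: (i) no chord of odd step can have color $c$, since such a chord would let you string together a Hamiltonian path of $\ell$ vertices entirely in color $c$ and close it into an $\ell$-cycle with at most two colors; (ii) every chord of even step \emph{must} have color $c$, since otherwise one builds an $\ell$-cycle (omitting one vertex) with no $c$-edge at all. Hence the even- and odd-indexed vertices each induce a complete graph entirely in color $c$, and these are the only $c$-edges. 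Now take any third color $w\neq c$: either $w$ has two disjoint edges, in which case there is an $\ell$-cycle using those two edges and otherwise only $c$-edges (two colors, contradiction), or all $w$-edges meet a single vertex $x$, in which case an $\ell$-cycle through $x$ using two $c$-edges at $x$ avoids $w$ entirely. This disposes of every even $\ell+1$ at once, with no case split on $\gcd(j-1,\ell+1)$, which is precisely the uniformity your sketch is lacking.
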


The statements about cycles in Theorems \ref{theorem:two}--\ref{theorem:five} can be used to get an extension of the result of Faudree and Schelp \cite{Faudree:1974} in the following manner. Let $s$ and $t$ be integers with $t\geq2, s\geq 3$, and $s\geq t$.  The $t$-polychromatic cyclic Ramsey number $\pr_t(s)$ is the smallest integer $N\geq s$ such that in any $t$-coloring of the edges of $K_N$ there exists an $s$-cycle whose edges do not contain all $t$ colors.  Note that in the special case $t=2$, this is the classical Ramsey number for cycles, the smallest integer $N$ such that in any $2$-coloring of the edges of $K_N$ there exists a monochromatic $s$-cycle.  These numbers were determined for all $s$ by Faudree and Schelp \cite{Faudree:1974}, confirming a conjecture of Bondy and Erd{\H{o}}s \cite{Bondy:1972}.

\begin{theorem}\label{extension}
	Let $\pr_t(s)$ be the smallest integer $n\geq s\geq 3$ such that in any $t$-coloring of the edges of $K_n$ there exists an $s$-cycle whose edges do not contain all $t$ colors.  If $t\geq 3$,
		\[
			\pr_t(s)=\begin{cases}
								s, & \mathrm{if\ } 3<s\leq 3\cdot 2^{t-3}\\
								s+1, & \mathrm{if\ } s\in [3\cdot2^{t-3}+1,5\cdot2^{t-2}-2]\\
								s+2, & \mathrm{if\ } s\in [5\cdot 2^{t-2}-1, 5\cdot 2^{t-1}-4]\\
								s + \round\left(\frac{s-2}{2^t-2}\right), & \mathrm{if\ } s\geq 5\cdot2^{t-1}-3
					\end{cases}
		\]
\end{theorem}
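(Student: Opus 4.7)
My plan is to translate $\pr_t(s)$ into an extremal question about polychromatic colorings of $K_n$ and then invoke the theorems proved earlier in the paper. I first observe that $\pr_t(s)-1$ is the largest $n\geq s$ for which $K_n$ admits an edge-coloring with $t$ colors in which every $s$-cycle contains all $t$ colors; equivalently, writing $q=n-s$, it is the largest $n$ for which $\pcq(n)\geq t$. Since $t\geq 3$, the bounded-edge perturbations of Theorems \ref{theorem:two} and \ref{theorem:three} cannot affect whether $t$ colors are attainable, so Theorem \ref{theorem:four} lets me assume the extremal $t$-coloring is simply-ordered.

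For a simply-ordered $t$-coloring with consecutive blocks $B_1,\ldots,B_t$ of sizes $b_1,\ldots,b_t$, an edge $v_iv_j$ with $i<j$ receives the color of the block containing the smaller endpoint $v_i$. I then characterize when every $s$-cycle is polychromatic. Color $c_1$ appears on every edge incident to a $B_1$-vertex, so $c_1$ is present iff the cycle meets $B_1$, giving $b_1\geq n-s+1$. At the other extreme, $c_t$ appears only on edges interior to $B_t$, so $c_t$ is missed by some $s$-cycle iff the cycle can have its $B_t$-vertices pairwise non-adjacent; this gives $b_t\geq n-\lceil s/2\rceil+1$. For each middle color $c_j$ ($1<j<t$), missing $c_j$ requires every chosen $B_j$-vertex to be a local maximum of the cycle with both cycle-neighbors in $B_1\cup\cdots\cup B_{j-1}$; a careful case analysis yields an inequality that couples $b_j$ to $b_1+\cdots+b_{j-1}$ and smoothly interpolates between the two extremes.

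The theorem then follows by maximizing $n=b_1+\cdots+b_t$ subject to this system. The binding configuration has, up to integer rounding, each $b_j$ roughly twice the sum $b_1+\cdots+b_{j-1}$, so the $b_j$ grow geometrically with common ratio near $2$. Summing this near-geometric series and solving for $n$ yields the asymptotic bound $n-s\approx (s-2)/(2^t-2)$, which after integer rounding is exactly the fourth case $\pr_t(s)=s+\round((s-2)/(2^t-2))$. The first three cases correspond to the small-$s$ regime where the rounded slack is $0$, $1$, or $2$; the breakpoints $3\cdot 2^{t-3}$, $5\cdot 2^{t-2}-2$, and $5\cdot 2^{t-1}-3$ are the smallest values of $s$ at which an additional unit of slack first becomes absorbable into the block-size inequalities.

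The main obstacle will be the discrete accounting required to pin down these three breakpoints exactly, matching a simply-ordered lower-bound construction against an exhaustive upper-bound analysis at every boundary $s$. A secondary point is the exceptional range $n\in[2q+2,3q+2]$ with $n-q$ odd excluded from Theorem \ref{theorem:four}; here I would either verify directly that $\pcq(n)<3$ throughout that range (so it does not affect the answer for $t\geq 3$), or handle it via a one- or two-edge recoloring trick analogous to Theorems \ref{theorem:two} and \ref{theorem:three}.
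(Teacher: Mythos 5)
Your overall strategy is the one the paper uses: identify $\pr_t(s)-1$ with the largest $n$ (writing $q=n-s$) for which $\pcq(n)\geq t$, reduce to the structured colorings guaranteed by Theorems \ref{theorem:two}--\ref{theorem:four}, characterize when such a coloring is $C_q$-polychromatic in terms of block sizes, and optimize. The paper has already packaged the last two steps as Lemma \ref{orderedPClemma}\ref{OPC-HC} and the explicit colorings $\varphi_{C_0},\varphi_{C_1},\varphi_{C_q}$ of Section \ref{sec:optimal_polychromatic_colorings}, so its proof of Theorem \ref{extension} is just the algebra of inverting the resulting formulas for $\pcq(n)$.

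However, there is a genuine error in your reduction. You claim that since $t\geq 3$, ``the bounded-edge perturbations of Theorems \ref{theorem:two} and \ref{theorem:three} cannot affect whether $t$ colors are attainable,'' and you therefore optimize only over simply-ordered colorings. This is false, and it is precisely where the first three cases of the theorem come from. For $q=0$ the best simply-ordered coloring requires $n\geq 2^{t-1}+1$ to achieve $t$ colors (blocks of sizes $1,1,2,4,\ldots$ with the last block one more than the sum of the rest), whereas the quasi-simply-ordered coloring $\varphi_{C_0}$, which differs from a simply-ordered coloring by one recolored edge, achieves $t$ colors already for $n\geq 3\cdot2^{t-3}+1$; the saving of a single vertex at the third color class doubles through every subsequent class. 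Likewise for $q=1$: simply-ordered needs $n\geq 3\cdot 2^{t-1}$, while $\varphi_{C_1}$ needs only $n\geq 5\cdot 2^{t-2}$. The breakpoints $3\cdot 2^{t-3}$ and $5\cdot2^{t-2}-2$ in the statement are exactly the quasi-ordered thresholds, so your plan as written would produce the wrong constants in the first three cases; you must carry the $Z$-quasi-ordered structure (with $\abs{Z}=3$ for $q=0$ and $\abs{Z}=4$ for $q=1$) through both the construction and the matching upper bound, which is what Theorems \ref{theorem:two} and \ref{theorem:three} actually assert. A smaller point: the binding recursion is $b_j\geq b_1+\cdots+b_{j-1}+q$ (each block at least the \emph{sum} of its predecessors plus $q$, with $+q+1$ for the last block), not ``twice the sum''; it is the partial sums $S_j$ that satisfy $S_j=2S_{j-1}+q$, giving ratio $2$. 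Your stated relation would give growth ratio $3$ and the wrong closed form $(2^t-1)q+2^{t-1}+1$ for the minimal $n$, so this needs to be fixed before the fourth case comes out correctly.
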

\noindent where $\round\left(\frac{s-2}{2^t-2}\right)$ is the closest integer to $\frac{s-2}{2^t - 2}$, rounding up if it is $\frac{1}{2}$ more than an integer.

% section main_results (end)

\section{Definitions} % (fold)
\label{Definitions}

  Recall that if $\varphi$ is an ordered edge coloring of $K_n$ with respect to the ordering $v_1, \ldots, v_n$ of its vertices, we say that $\varphi'$ is the  {\it $\varphi$-inherited coloring} (or just {\it inherited coloring}) if it is the vertex coloring  of $K_n$ defined by $\varphi'(v_i)=\varphi(v_i v_j)$ for $1\leq i<j\leq n$ and $\varphi'(v_n)=\varphi'(v_{n-1})$.  Given an ordering of $V(K_n)$, any vertex coloring $\varphi'$ such that $\varphi'(v_{n-1})=\varphi'(v_n)$ uniquely determines a corresponding ordered coloring.  We define a \emph{color class $M_i$ of color $i$} to be the set of all vertices $v$ where $\varphi'(v)=i$.  In this paper, we shall always think of  the ordered vertices as arranged on a horizontal line with $v_i$ to the left of $v_j$ if $i<j$. We say that an edge $v_iv_m$, $i<m$  goes from $v_i$ to the right and from $v_m$ to the left.  If $X$ is a (possibly empty) subset of $V(K_n)$, we say that the edge-coloring $\varphi$ of $K_n$  is 
	\begin{itemize}
		\item \emph{$X$-constant} if  for any $v\in X$,  $\varphi(v u)=\varphi(v w)$ for all  $u, w\in V\setminus X$,
		\item \emph{$X$-ordered} if it is $X$-constant and the vertices of $X$ can be ordered $x_1, \ldots, x_m$ such that for each $i = 1,\ldots, m$,
		$\varphi(x_i x_p ) = \varphi(x_i x_m) = \varphi(x_i w)$ for all  $i<p\leq m$ and all $w\in V\setminus X$,
		% \item \emph{$X$-complement-ordered} if $c$ is $X$-constant and $c$ restricted to the edges induced by $V\setminus X$ is ordered (so there is no restriction on the colors of edges within $X$),
% 		\item \emph{$X$-complement-simply-ordered} if $c$ is $X$-constant and $c$ restricted to the edges induced by $V\setminus X$ is simply-ordered and the set of colors on edges within $V\setminus X$ is disjoint from the set of colors on the edges with at least one vertex in $X$.
	\end{itemize}
If $Z$ is a nonempty subset of $V(K_n)$ we say $\varphi$ is
	\begin{itemize}
		\item \emph{$Z$-quasi-ordered} if 
			\begin{enumerate}
				\item $\varphi$ is $Z$-constant
				\item Each vertex $v_i$ in $Z$ is incident to precisely $n-2$ edges of one color, which we call the \emph{main color} of $v_i$, and one edge $v_i v_j$ of another color, where $v_j\in Z$.  If that other color is $t$, then $v_j$ is incident to precisely $n-2$ edges of color $t$.
			\end{enumerate}
		% \item \emph{$X$-quasi-simply-ordered} if it is quasi-ordered and $\varphi$ restricted to $V\setminus X$ is simply-ordered.
	\end{itemize}
	
	It is not hard to show that there are only two possibilities for the set $Z$ in a $Z$-quasi-ordered coloring:
	\begin{enumerate}
		\item $\abs{Z}=3$, the three vertices in $Z$ have different main colors, and there is one edge in $Z$ of each of these colors
		\item $\abs{Z}=4$, with two vertices $u,v$ in $Z$ with one main color, say $i$ and two vertices $y,z$ in $Z$ with another main color, say $j$, and $\varphi(uv)=\varphi(uy)=\varphi(vz)=i,\varphi(yz)=\varphi(yv)=\varphi(zu)=j$.
	\end{enumerate}
	
	\begin{itemize}
		\item \emph{quasi-ordered} if it is $Z$-quasi-ordered and $\varphi$ restricted to $V\setminus Z$ is ordered
		\item \emph{quasi-simply ordered} if it is $Z$-quasi-ordered and $\varphi$ restricted to $V\setminus Z$ is simply ordered.
		\item \emph{nearly $X$-ordered} if it is $Z$-quasi-ordered and the restriction of $\varphi$ to $V(K_n)\setminus Z$ is $T$-ordered for some (possibly empty) subset $T$ of $V(K_n)\setminus Z$ and $X=Z\cup T$.  (If $\varphi$ is nearly $X$-ordered then one or two edges could be recolored to get an $X$-ordered coloring.)
	\end{itemize}
	
	It is easy to check that if $\varphi$ is quasi-ordered (quasi-simply-ordered) for some set $Z$ then if $\abs{Z}=3$ one edge can be recolored, and if $\abs{Z}=4$, then two edges can be recolored to get an ordered (simply-ordered) coloring.
	
	The \emph{maximum monochromatic degree} of an edge coloring of $K_n$ is the maximum number of edges of the same color incident with a single vertex.  If the maximum monochromatic degree of a coloring is $d$, and the vertex $v$ is incident with $d$ edges of color $t$, and the other $n-1-d$ edges incident with $v$ have color $s$, we say $v$ is a $t$-max vertex and also a \emph{$(t,s)$-max vertex} with \emph{majority color $t$} and \emph{minority color $s$}.
	
	We extend the notion of inherited to quasi-ordered colorings as follows.  If $\varphi$ is a quasi-ordered coloring with $\psi$ the ordered coloring which is a restriction of $\varphi$ to $V\setminus Z$, we define $\varphi'$, the $\varphi$-inherited coloring, by letting $\varphi'(x)$ equal the main color of $x$ if $x\in Z$ and $\varphi(y)=\psi'(y)$ if $y\not\in Z$.  We think of the vertices in $Z$ preceding those not in $Z$, in the order left to right, and if $\abs{Z}=4$ we list two vertices in $Z$ with the same main color first, then the other two vertices with the same main color.

% section definitions (end)
 
\section{Ordering Lemmas} % (fold)
\label{Lemmas}

Let $\varphi$ be an ordered edge coloring of $K_n$ with vertex order $v_1,v_2,\ldots,v_n$, colors $1, \ldots, k$, and $\varphi'$ be the inherited coloring of $V(K_n)$.  For each $t\in[k]$ and $j\in[n]$, let $M_t$ be a color class $t$ of $\varphi'$ and $M_t(j)=M_t\cap\{v_1,v_2,\ldots,v_j\}$.  The next Lemma is a key structural lemma that characterizes ordered polychromatic colorings.

\begin{lem}\label{orderedPClemma}
	Let $\varphi:E(K_n) \to[k]$ be an ordered or quasi-ordered coloring with vertex order $v_1,v_2 \ldots, v_n$.

	Then the following statements hold:

	\begin{enumerate}[label=(\Roman*)]
		\item\label{OPC-1F}  $\varphi$ is $F_q$-polychromatic $\Longleftrightarrow$ $\forall t\in [k]$   $\exists  j\in[n]$ such that  $\abs{M_t(j)} >\frac{j+q}{2}$,
		\item\label{OPC-HC} $\varphi$ is $C_q$-polychromatic $\Longleftrightarrow$  $\forall t\in [k]$ either
		\begin{enumerate}[label=(\alph*)]
			\item\label{HCone} $\exists  j\in[q+1,n-1]$ such that $\abs{M_t(j)} \geq \frac{j+q}{2}$ or
			\item\label{HCtwo} $q=0$, $\varphi$ is $Z$-quasi-ordered with $\abs{Z}=3$ and $t$ is the color of some edge in $Z$ or
			\item\label{HCthree} $q=1$, $\varphi$ is $Z$-quasi-ordered with $\abs{Z}=4$ and $t$ is the color of some edge in $Z$.
		\end{enumerate}
		\item\label{OPC-2F} $\varphi$ is $R_q$-polychromatic  $\Longleftrightarrow$  $\forall t\in [k]$ either
		\begin{enumerate}
			\item\label{2Fone} $\exists  j\in[n]$ such that 
				\begin{enumerate}[label=(\roman*)]
					\item\label{one} $\abs{M_t(j)}>\frac{j+q}{2}$ or 
					\item\label{two} $\abs{M_t(j)}=\frac{j+q}{2}$ and $j\in\{2+q,n-2\}$ or 
					\item\label{three} $\abs{M_t(j)}=\frac{j+q}{2}$ and $\abs{M_t(j+2)}=\frac{j+q+2}{2}$ where $j\in[4+q,n-3].$
				\end{enumerate}
			\item\label{2Ftwo} $q=0$, $\varphi$ is $Z$-quasi-ordered and $t$ is the color of some edge in $Z$
			\item\label{2Fthree} $q=1$, $\varphi$ is $Z$-quasi-ordered with $\abs{Z}=3$ and $t$ is the color of some edge in $Z$
		\end{enumerate}
	\end{enumerate}
\end{lem}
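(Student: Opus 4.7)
The plan is to prove each of the three equivalences (I), (II), (III) by translating the failure or success of color $t$ appearing in a subgraph $H$ of the relevant family into a combinatorial condition on the initial segments $M_t(j)$. The unifying observation is that in an ordered coloring, every edge $v_iv_j$ with $i<j$ has color $\varphi'(v_i)$, so $H$ misses color $t$ exactly when no vertex of $M_t \cap V(H)$ is the left endpoint of any edge of $H$; equivalently, every $v\in M_t\cap V(H)$ has all its $H$-neighbors earlier in the ordering. In the quasi-ordered case the same principle applies to edges outside the small exceptional set $Z$, and edges inside $Z$ are handled separately by direct case analysis of the two possible $Z$-structures.

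For part (I), if some $M\in F_q(n)$ misses color $t$, then every matched vertex of $M_t$ is matched to a strictly smaller-indexed partner which itself must lie outside $M_t$; restricting to $\{v_1,\ldots,v_j\}$ and noting that at most $q$ vertices are unmatched yields $|M_t(j)|\le (j+q)/2$ for every $j$. Conversely, if $|M_t(j)|\le (j+q)/2$ for every $j$, a left-to-right greedy matching that pairs each newly-encountered $M_t$ vertex with the most recent unused non-$M_t$ vertex of smaller index (leaving up to $q$ vertices unmatched) produces a color-$t$-avoiding element of $F_q(n)$, because the assumed inequality is exactly the Hall-type bound the construction requires.

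For part (II), an $(n-q)$-cycle $C$ misses color $t$ iff every vertex of $M_t\cap V(C)$ is a \emph{local maximum} of $C$ with respect to the ordering (both cycle-neighbors have smaller index). Since local maxima and local minima on any cycle are equal in count, and restricting $C$ to $\{v_1,\ldots,v_j\}$ yields a union of paths in which each local maximum of $C$ must be interior and can be charged to an earlier vertex, one obtains $|M_t(j)|<(j+q)/2$ whenever color $t$ is missing and $j\in[q+1,n-1]$; the endpoint restrictions on $j$ reflect vacuity for small $j$ and the uselessness of $j=n$. For the converse direction, failing clause (a) throughout that range permits an explicit ``backward'' construction of an $(n-q)$-cycle whose local maxima cover $M_t$. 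Clauses (b) and (c) are handled by enumerating the two possible $Z$-structures and checking that in each case any $(n-q)$-cycle (Hamiltonian when $q=0$, nearly so when $q=1$) must traverse $Z$ in a pattern that uses every color appearing on an edge within $Z$.

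For part (III), a $2$-regular spanning subgraph on at least $n-q$ vertices decomposes into disjoint cycles with up to $q$ omitted vertices, giving extra flexibility over (II). The strict inequality case (i) mirrors (I), while the equality clauses (ii) (at the boundary positions $j\in\{q+2, n-2\}$) and (iii) (equality propagating from $j$ to $j+2$ in $[q+4,n-3]$) capture the borderline positions where the looser freedom of multiple cycles and omitted vertices still cannot avoid color $t$; in each such case a would-be color-$t$-avoiding subgraph must organize $M_t$ into short ``backward'' cycles, and the constraints on $j$ produce a contradiction. Clauses (2Ftwo) and (2Fthree) are again handled by direct case analysis of $Z$. The main obstacle I anticipate is the tightness analysis for (II) and the equality subcases of (III): verifying that the local-maximum bound on cycles is sharp and that the boundary and propagation conditions precisely identify the cases where no color-$t$-avoiding subgraph exists, which requires careful handling of the corner cases where $M_t$ clusters near the ends of the ordering.
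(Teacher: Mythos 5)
Your overall strategy is the same as the paper's: in an ordered coloring the edge $v_iv_j$ ($i<j$) carries the inherited color of $v_i$, so a subgraph misses color $t$ exactly when every $M_t$-vertex it contains sends all of its edges leftward; both directions of each equivalence then reduce to counting the initial segments $M_t(j)$ and to explicit constructions, with the quasi-ordered exceptions handled by inspecting the two possible $Z$-structures. Your local-maximum framing of part (II) is a legitimate repackaging of the paper's argument (the paper instead observes that if $|M_t(j)|=\frac{j+q}{2}$ and all edges of the cycle at $M_t(j)$ go left, then the first $j$ vertices would induce a proper $2$-regular subgraph of the cycle, which is absurd); either count yields the strict bound $|M_t(j)|<\frac{j+q}{2}$, provided you treat separately the case $j\geq n-q$, where the whole cycle may already lie inside $\{v_1,\dots,v_j\}$ and the path decomposition degenerates.

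The genuine gap is in part (III). The converse direction there --- when (i), (ii), (iii) and the $Z$-clauses all fail, one must actually produce a $2$-regular subgraph spanning at least $n-q$ vertices with no edge of color $t$ --- is the substantive construction of the lemma, and you defer it (``the main obstacle I anticipate'') rather than supply it. The paper's construction lets $i_1<\cdots<i_k$ be the positions in $[q+4,n-3]$ where $|M_t(j)|=\frac{j+q}{2}$, uses the failure of (iii) to conclude that each gap $i_{\ell+1}-i_\ell$ is even and at least $4$, and builds a separate cycle on each interval $[i_\ell+1,i_{\ell+1}]$ by alternating non-$M_t$ and $M_t$ vertices (plus an initial cycle on $[1,i_1]$ and a final one on $[i_k+1,n]$); the role of clauses (ii) and (iii) is exactly to guarantee each interval supports such a cycle, and without this interval decomposition the equivalence is not established. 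Relatedly, in the forward direction of (ii) with $j=n-2$ and of (iii), the step that closes the argument is that a disjoint union of cycles cannot contain a sub-union of cycles on exactly two fewer vertices (two leftover vertices cannot form a cycle); your ``short backward cycles'' remark points in this direction but does not isolate that two-vertex-difference argument, which is what makes those borderline positions forced.
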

	\begin{proof}
		
	Note that to prove the lemma, it is sufficient to consider an arbitrary color $t$ and show for $\sH \in \{ F_q, C_q, R_q \}$  and for each $H\in \sH$, that the given respective conditions are equivalent to $H$ containing an edge of color $t$. 
	
		% For all parts of the proof of this lemma, let $c:E\to[k]$ be an ordered coloring with order $v_1,v_2,\ldots,v_n$ and inherited vertex coloring $c':V\to[k]$.  For each $t\in[k]$, let $R_t(c)=R_t=\setst{v_i}{c'(v_i)=t}$ and $W_t(c)=W_t=V\setminus R_t$.  Order the vertices in $R_t$ and $W_t$ according to the order inherited from $v_1,v_2,\ldots,v_n$, giving orders $x_1,x_2,\ldots,x_r$ for $R_t$ and $y_1,y_2,\ldots,y_w$ for $W_t$ where $r+w=n$, $c'(x_j)=t$ for all $j\in[r]$ and $c'(y_j)\neq t$ for all $j\in[w]$.  So for each $t\in[k]$ and $j\in[n]$, $s_t(j)=\abs{R_t\cap[j]}$.
		
		 \emph{\ref{OPC-1F}}
		% (fold)
		 Let $j$ be an index such that $\abs{M_t(j)}=m_j>(j+q)/2$ and let $H$ be a $1$-factor.  Let $x_1, \ldots, x_{m_j}$ be the vertices of $M_t$ in order and let $y_1,\ldots y_{j-{m_j}}$ be the other vertices of $\{v_1,v_2,\ldots,v_{j}\}$ in order.   Since $j-m_j<\frac{j-q}{2}$ and $m_j-q>\frac{j-q}{2}$, then at least one edge of $H$ with an endpoint in $M_t(j)$ must go to the right, and thus, have color $t$.
		
		On the other hand, by way of contradiction, assume that for each $j\in[n]$, $\abs{M_t(j)}\leq (j+q)/2$.  Letting $m=\abs{M_t}$, we have $m\leq (n+q)/2$.  Consider a $1$-factor that spans all vertices except for $q$ vertices in $M_t$.  Let $x_1, \ldots, x_{m-q}$ be the $m-q$ vertices remaining from $M_t$ in order and let $y_1,\ldots y_{n-m}$, be the vertices outside of $M_t$ in order.  Note that since $m\leq (j+q)/2$, it follows that $n-m\geq m-q$ since if $n-m<m-q$ then $n<2m-q$ and so $j>n$ which is impossible.  Now, let $H$ consist of the edges  $x_1y_1,  x_2y_2, \ldots, x_{m-q}y_{m-q}$ and a perfect matching on $\{y_{m-q+1}, \ldots, y_{n-m}\}$ (if this set is non-empty).  We will show that $y_i$ precedes $x_i$ in the order $v_1,v_2,\ldots,v_n$ for each $i\in[m-q]$, so $H$ has no edge of color $t$.
		
		By way of contradiction, assume $x_i$ precedes $y_i$ for some $i\in[m-q]$.  Letting $j=2i-1+q$, $y_i$ cannot be among the first $j$ vertices in the order $v_1,v_2,\ldots,v_n$, because if it were there would be at least $i+q$ vertices of color $t$ among these $j$ vertices, so a total of at least $2i+q>j$ vertices.  Hence
		\[
			\frac{j+q}{2}=\frac{2i+2q-1}{2}<i+q\leq \abs{M_t(j)}\leq \frac{j+q}{2}
		\]
		which is impossible.  Hence $y_i$ precedes $x_i$ for each $i$ and $\varphi$ is not $F_q$ polychromatic.
		%We see that $y_i<x_i$ for $i=1, \ldots, m-q$, so all edges incident to $M_t$ go to the left and do not have color $t$. Further, the edges that are not incident to $M_t$ are also not of color $t$.  Thus, $\varphi$ is not $F_q$-polychromatic.
		% (end)
		%end of Proof of OPC-1F
	% 	
	
	 	\emph{\ref{OPC-HC}}
		% (fold)
		If $t$ is a color such that \ref{HCone} holds with strict inequality, the argument in \ref{OPC-1F} shows there is an edge of $H$ with color $t$.  If $\abs{M_t(j)}=\frac{j+q}{2}$ for some $j\in[q+1,n-1]$ and every edge in $H$ incident to a vertex in $M_t(j)$ goes to the left then, since each of these edges has its other vertex not in $M_t(j)$, $H$ contains $\frac{j-q}{2}$ vertices in $M_t(j)$ and the same number not in $M_t(j)$. If $\frac{j-q}{2}=1$, then the vertex in $M_t(j)$ is incident with at least one edge which goes to the right, and if $\frac{j-q}{2}\geq 2$ then $H$ contains a $2$-regular subgraph, which is impossible because an $n-q$ cycle can't have a $2$-regular subgraph on less than $n-q$ vertices.

		% If $H$ does not have edges of color $t$, then all edges of $H$ incident to $M_t$ go to the left.  In particular, all edges of $H$ incident to $M_t(j)$ go to the left and form a bipartite graph $H'$ whose other partite set is the other $j-m$ vertices in $v_1,v_2,\ldots,v_{j}$.  However, this means that $H'$ is $2$-regular on less than $n-q$ vertices, and thus cannot be a proper subgraph of $H$.
	
	If $t$ is such that \ref{HCtwo} holds, then note that $t$ must be the main color of a vertex in $Z$ and that the cycle must contain 2 edges incident with each vertex in $Z$.  Any choice of these edges will contain an edge of color $t$ since only one edge incident with each vertex in $Z$ is not the main color of that vertex.
	
	If $t$ is such that \ref{HCthree} holds, then note that $t$ must be the main color of a vertex in $Z$ and any cycle on $n-1$ vertices must contain 2 edges incident with at least three of the four vertices in $Z$.  Any choice of these edges will contain an edge of color $t$ since only one edge incident with each vertex in $Z$ is not the main color of that vertex.
	
	On the other hand, suppose that for each $j\in [q+1,n-1]$, $\abs{M_t(j)}=m<\frac{j+q}{2}$ and $\varphi$ is not $Z$-quasi-ordered with $t$ a main color.  In particular, when $j=n-2$, we have that $\abs{M_t(j)}=m<\frac{n+q}{2}-1$.  Consider a cycle that spans all vertices except for $q$ vertices in $M_t$.  Let $x_1,\ldots,x_{m-q}$ be the other $m-q$ vertices in  $M_t$ in order and $y_1,\ldots,y_{n-m}$ be the vertices outside of $M_t$ in order.  Note that if $m<\frac{j+q}{2}$, then $n-m>m-q$ since $n-m\leq m-q \implies j>n$ which is impossible.  Consider the cycle $y_1 x_1 y_2 x_2 \cdots y_{m-q} x_{m-q} y_{m-q+1}\cdots y_{n-m} y_1$.  Suppose $y_i$ is to the right of $x_i$ for some $i$.  Then at most $i$ of the first $j=2i+q$ vertices are not in $M_t(j)$, so $\abs{M_t(j)}\geq i+q=\frac{j+q}{2}$, which is impossible.  Hence $y_i$ and $y_{i+1}$ are to the left of $x_i$ for each $1\leq i\leq m$, all edges of $H$ incident to $M_t$ go to the left, and thus are not of color $t$.
		% (end)
	 	%end of Proof of OPC-HC
	 	
		\begin{obs} % (fold)
		If $H$ is a $2$-regular subgraph that has no edge of color $t$,  and $M$ is any subset of   $M_t$, then  all edges of $H$ incident to $M$  go to the left, so at most half the vertices in $H$ are in $M_t$ and if $\abs{M_t(j)}=\frac{j+q}{2}$, then of the first $j$ vertices, precisely $j-q$ are in $H$, precisely half of these in $M_t$, and if $j-q\geq 4$ then these $j-q$ vertices induce a 2-regular subgraph of $H$.
		% and form a bipartite graph $H'$  with another part $S$  formed by some vertices not in $M_t$. Since each vertex of $M$ in $H'$ has degree $2$ and each vertex of $S$ has degree at most $2$ in $H'$, we see that $\abs{S}\geq \abs{M}$.  In particular  $\abs{M_t(j)} = \frac{j+q}{2}$ in this case and if $\abs{M_t(j)} \leq \frac{j+q}{2}$ then $H'$ is a $2$-factor.
		\end{obs} % (end)
	
	 	\emph{\ref{OPC-2F}}
	 	% (fold)
		Let $j$ be an index such that \ref{2Fone}~\ref{one}, \ref{two}, or \ref{three} holds. Assume first that \ref{one} holds, i.e., that $\abs{M_t(j)}>\frac{j+q}{2}$ and let $H$ be a $2$-factor.  Then the argument given in \ref{OPC-1F} shows that at least one edge of $H$ with an endpoint in $M_t(j)$ must go to the right, and thus, have color $t$.  Assume that \ref{two} holds.  If $j=2+q$, then $M_t$ contains $q+1$ of the first $q+2$ vertices, so $H$ contains a vertex in $M_t$ which has an edge that goes to the right, so there is an edge of color $t$ in $H$.  If $j=n-2$ and $H$ has no edges of color $t$, then (by the previous observation) the subgraph of $H$ induced by $[n-2]$ is a $2$-factor.  Since the remaining two vertices do not form a cycle, $H$ is not a $2$-factor, a contradiction.  Finally, assume that \ref{three} holds.  If $H$ does not have an edge of color $t$, then by the previous observation, $H$ has a 2-regular subgraph spanning $j-q+2$ vertices, which has a 2-regular subgraph spanning $j-q$ vertices, which is impossible.
		
		If \ref{2Ftwo} or \ref{2Fthree} holds, by the argument for \ref{OPC-HC}, $H$ has an edge of color $t$.
		% the subgraphs $H'$ and $H''$ of $H$ induced by $[j+q]$ and $[j+q+2]$ are $2$-factors, which is clearly impossible.
	
		% If $t$ is such that \ref{2Ftwo} holds, then note that $t$ must be the main color of a vertex in $X$ and that the $2$-factor on $n$ vertices must contain 2 edges incident with each vertex in $X$.  Any choice of these edges will contain an edge of color $t$ since only one edge incident with each vertex in $X$ is not the main color of that vertex.
	
		% If $t$ is such that \ref{2Fthree} holds, then note that $t$ must be the main color of a vertex in $X$ and the $2$-factor cycle on $n-1$ vertices must contain 2 edges incident with at least three of the four vertices in $X$.  Any choice of these edges will contain an edge of color $t$ since only one edge incident with each vertex in $X$ is not the main color of that vertex.
	
	On the other hand, suppose that none of \ref{2Fone}, \ref{2Ftwo}, or \ref{2Fthree} hold.  We shall construct a $2$-factor that does not have an edge of color $t$.  If $\abs{M_t(j)}<\frac{j+q}{2}$ for each $j\in[q+1,n-1]$, then there is a cycle with no color $t$ edge as described in \ref{OPC-HC}.  If not, let $i_1,i_2,\ldots,i_k$ be the values of $j$ in $[4+q,n-3]$ for which $\abs{M_t(j)}=\frac{j+q}{2}$. Since \ref{2Fone}\ref{three} is not satisfied, $i_{q+1}-i_q$ is at least 4 and even for $q=1,2,\ldots,k-1$. As before, suppose there are $m$ vertices of color $t$. Let $x_1,x_2,\ldots,x_{m-q}$ be the last $m-q$ of these, in order, and let $y_1,y_2,\ldots,y_{n-m}$ be the other vertices, in order.  Note that since $m\leq \frac{n+q}{2}$ we have $m-q\leq\frac{n-q}{2}$ and $n-m\geq\frac{n-q}{2}$. For each $q$ in $[1,k-1]$, moving left to right within the interval $[i_q+1,i_{q+1}]$, there are always more $y$'s than $x$'s (except an equal number of each at the end of the interval), since otherwise there would have been another value of $j$ between $i_q$ and $i_{q+1}$ where $\abs{M_t(j)}=\frac{j+q}{2}$.  Form an $(i_{q+1}-i_q)$-cycle by alternately taking $y$'s and $x$'s, starting with the $y$ with the smallest subscript.  Also form an $i_1-q$ cycle using the first $\frac{i_1-q}{2}$ $y$'s and the same number of $x$'s, and an $n-i_k$ cycle at the end, first alternating the $y$'s and $x$'s, putting any excess $y$'s at the end.
	
	% If there are indices $j$ such that $\abs{M_t(j)}=m=\frac{j+q}{2}$, they must all be even integers at least $4+q$, at most $n-3$ (not $n-1$ because $\varphi'(n-1)=\varphi'(n)$ so \ref{one} would hold with $j=n$), and with pairwise difference at least $4$.  Let these indices be $i_1<i_2<\cdots<i_k$.  Let $H$ be the union of cycles induced by $[i_1], [i_2]\setminus[i_1], \ldots,[i_k]\setminus[i_{k-1}], [n]\setminus[i_k]$.  In each of these cycles (except for the last) alternate the elements of $M_t$ and those not in $M_t$.  In the last cycle, first alternate elements of $M_t$ with those not in $M_t$ and then use the remaining elements not in $M_t$.  
		% (end)
		%end of Proof of OPC-2F
	\end{proof}

% end of Proof of orderedPClemma

\begin{lem}\label{O2SO}
Let $\sH \in \{F_q, R_q, C_q\}$. 
If there exists an ordered (quasi-ordered) $\sH$-polychromatic coloring of $K_n$ with $k$ colors, then there exists one which is simply-ordered (quasi-simply-ordered) with $k$ colors.
	\begin{proof}
		
 	Let $V(K_n) =[n]$ with the natural order.  If $c'$ is a coloring of $[n]$, a {\it block} of $c'$ is a maximal interval of integers from $[n]$ which all have the same color.  So a simply-ordered $k$-polychromatic coloring has precisely $k$ blocks.  We define a {\it block shift operation} as follows.  Assume that  $t\in[k]$ is  a color for which there are at least $2$ blocks.  Let $j(t)=j$ be the smallest integer so that $M_t(j)>(j+q)/2$ if such exists.  If there is a block $[m,s]$ in $M_t$  where $m>j$, delete this block, then  take the color of the last vertex in the remaining  sequence, and add $s-m+1$ more vertices with this color at the end of the sequence.   If   each   block  of color $t$  has its smallest  element less than or equal to $j$,  consider the block  $B$ of color $t$  that contains $j$ and consider another block $B_1$ of color $t$ that is strictly to the left of $B$.  Form a new coloring by ``moving''  $B_1$ next to $B$.  We see that the  resulting coloring has at least one less block.
 	 		
  	Let $c$ be a ordered (quasi-ordered) $F_q$-polychromatic coloring of $K_n$  on vertex set $[n]$ with $k$ colors such that the inherited vertex coloring $c'$ has the smallest possible number of blocks. Assume that color $t$  has  at least $2$ blocks.   Let $j(t)=j$ be the smallest integer so that $M_t(j)>(j+q)/2$. 
 Such $j$ exists by Lemma \ref{orderedPClemma}$\ref{OPC-1F}$, and the color of $j$ is $t$. Apply the block shifting operation.   The condition from part $\ref{OPC-1F}$ of Lemma \ref{orderedPClemma} is still valid for all color classes,  so the new coloring is $F_q$-polychromatic using $k$ colors.  This contradicts the choice of $c$ having the smallest number of blocks.
 	   
If $c$ is an ordered (quasi-ordered) $C_q$-polychromatic coloring of $K_n$, an argument very similar to the one above shows if \ref{OPC-HC}\ref{HCone}, \ref{HCtwo}, or \ref{HCthree} hold, there exists a simply-ordered (quasi-simply-ordered) coloring that uses the same number of colors and that is $C_q$-polychromatic.
 	
 Finally, let $c$ be an ordered ($X$-quasi-ordered) $R_q$-polychromatic coloring of $K_n$  on vertex set $[n]$ with $k$ colors such that the inherited vertex coloring $c'$ has the minimum possible number of blocks.  Assume that  $t\in[k]$ is  a color for which there are at least $2$ blocks. If \ref{2Ftwo} or \ref{2Fthree} hold, then the block shifting operation gives a coloring that is still $R_q$-polychromatic with the same number of colors and fewer blocks.

Thus, by Lemma \ref{orderedPClemma}\ref{OPC-2F} there exists $j$ such that 
 	\begin{enumerate}[label=($\arabic*$)]
 		\item\label{first} $\abs{M_t(j)}>(j+q)/2$ or
 		\item\label{seconda} $\abs{M_t(2+q)}=1+q$ or 
 		\item\label{secondb} $\abs{M_t(n-2)}= (n+q-2)/2$ or
 		\item\label{secondc} $\abs{M_t(n-1)}= (n+q-1)/2$ or
 		\item\label{third} $\abs{M_t(j)} = (j+q)/2$  and $\abs{M_t(j+2)} = (j+q+2)/2$ and $4+q\leq j\leq n-3.$
 	\end{enumerate}
 		
 If \ref{first} holds,  then we apply the block shifting operation and observe, as in the case of $F_q$, that the resulting coloring is still $R_q$-polychromatic with the same number of colors and fewer blocks. The case when \ref{seconda} applies is similar.

Assume neither \ref{first} nor \ref{seconda} holds.  If \ref{secondb} holds then, since $c'(v_{n-1})=c'(v_n)$, neither $v_{n-1}$ nor $v_n$ can have color $t$.  Hence there is another block of color $t$ vertices to the left of the one containing $v_{n-2}$, so we can do a block shift operation ot reduce the number of blocks, a contradiction.

The same argument works if \ref{secondc} holds.
		% Assume that \ref{first} and \ref{seconda} do not apply, but \ref{secondb} holds.  By \ref{secondb}, $\abs{M_t(n-2)}=k=\frac{n+q-2}{2}$ which means $n-q=2(k+1-q)$ and so $n-q$ is even.  Further, $c'(n-q-2)=t$, and $c'(n-q-1)=c'(n-q)\neq t$.  We define a new coloring $c''$ by $c''(i)=c'(i)$ if $i\leq n-q-2$ and $c''(n-q-1)=\cdots=c''(n)=t$.   Certainly $c''$ has fewer blocks than $c'$.  The only situation when $c''$ is not an inherited coloring of a $R_q$-polychromatic coloring would be if $c'(n-q-1)=\cdots=c'(n)=u\neq t$ and the color $u$ does not satisfy $\ref{one}$, $\ref{two}$, or $\ref{three}$ in $c''$.  However, that would mean the smallest integer $j'(u)$ for which one of $\ref{one}$, $\ref{two}$, or $\ref{three}$ is satisfied in $c'$ is $j'(u)=n-q-1$, so $\frac{n-q}{2}$ vertices in $[n-q-1]$ have color $u$, which means $\frac{n-q-2}{2}$ vertices in $[n-q-3]$ have color $u$, so \ref{one} is satisfied with $j'(u)=n-q-3$, a contradiction.
 
 Finally, assume that none of \ref{first}--\ref{secondc} holds, but \ref{third} holds. This implies that  $c'(j)=c'(j+2)=t$ and $c'(j+1)=u\neq t$.  Now define $c''$ by $c''(i)=c'(i)$ if $i\not\in\{j+1, j+2\}, c''(j+1)=t$, and $c''(j+2)=u$.  Clearly $c''$ has at least one fewer block than $c'$. Since $j+q+1$ is odd, the only situation where $c''$ would not be $R_q$-polychromatic is if $M_u(j+1)>\frac{j+q+1}{2}$.  However, then $\abs{M_u(j-1)}=\abs{M_u(j+1)}-1>\frac{j+q-1}{2}$, so $c''$ is $R_q$-polychromatic after all.
 
			% The only situation when $c''$ may not be $R_q$-polychromatic is when precisely $\frac{j+2}{2}$ vertices in $[j+1]$ have color $u$ under $c'$. In this case $j/2$ vertices in $[j-1]$ have color $u$ in both $c'$ and $c''$, so $c''$ is $R_q$-polychromatic after all.
	\end{proof}	
\end{lem}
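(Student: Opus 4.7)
The plan is an extremal argument driven by the structural characterization in Lemma \ref{orderedPClemma}. Start with an ordered (resp.\ quasi-ordered) $\sH$-polychromatic $k$-coloring $c$ whose inherited vertex coloring $c'$ has the fewest possible blocks on $V\setminus Z$ (with $Z=\emptyset$ in the ordered case). If $c'$ has more than $k$ blocks, some color $t$ occurs in at least two blocks, and the goal is to build a new ordered (quasi-ordered) $\sH$-polychromatic $k$-coloring with strictly fewer blocks, contradicting minimality.

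The main engine is a \emph{block-shifting operation}. For the fixed color $t$, let $j=j(t)$ be the least index witnessing the condition in part \ref{OPC-1F}, \ref{OPC-HC}, or \ref{OPC-2F} of Lemma \ref{orderedPClemma}. If some block of color $t$ lies strictly to the right of position $j$, delete that block and append vertices of the current rightmost color at the end; otherwise some block of color $t$ lies strictly to the left of the block containing $j$, and we slide it rightward to merge with the block containing $j$. Either operation strictly decreases the number of blocks while leaving $|M_t(j)|$ unchanged and not decreasing $|M_s(i)|$ for any other color $s$ at any witnessing index $i\le j$. For $\sH=F_q$, the strict inequality in \ref{OPC-1F} is preserved for $t$ and for every other color (since its witnessing index is untouched or only receives more of its own color). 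For $\sH=C_q$, the same argument works for case \ref{HCone}; for cases \ref{HCtwo} and \ref{HCthree}, the quasi-ordered witnesses live in $Z$ and are untouched because we only shift inside $V\setminus Z$.

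The main obstacle is $\sH=R_q$, because \ref{OPC-2F} tolerates equalities that can be destroyed by block-shifting. If the witness for $t$ comes from \ref{2Fone}\ref{one} or \ref{2Ftwo}--\ref{2Fthree}, block-shifting again works verbatim. If it comes from \ref{2Fone}\ref{two} at $j=2+q$, the first $q+2$ positions are dominated by $t$ and the standard shift applies. The equality cases at $j=n-2$ or $j=n-1$ require the observation that $c'(v_{n-1})=c'(v_n)$ forces the trailing two vertices to have a common color; if that color were $t$ the equality $|M_t(n-2)|=(n+q-2)/2$ or $|M_t(n-1)|=(n+q-1)/2$ would be violated, so the rightmost vertices have color $\neq t$ and an earlier block of color $t$ can be shifted right.

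The truly delicate case is \ref{2Fone}\ref{three}: the equality $|M_t(j)|=(j+q)/2=|M_t(j+2)|-1$ at some $j\in[4+q,n-3]$. Here I would not use a block-shift but a \emph{local swap}: since $c'(j)=c'(j+2)=t$ and $c'(j+1)=u\neq t$, set $c''$ equal to $c'$ except $c''(j+1)=t$ and $c''(j+2)=u$, which strictly decreases the block count. The $R_q$-witnesses for every color besides $u$ are unaffected. The only way $c''$ could fail \ref{OPC-2F} for $u$ would be $|M_u(j+1)|>(j+q+1)/2$; but $j+q+1$ is odd so this means $|M_u(j+1)|\ge(j+q+2)/2$, hence $|M_u(j-1)|=|M_u(j+1)|-1\ge(j+q)/2>(j+q-1)/2$ (strict since $j+q-1$ is odd), giving condition \ref{2Fone}\ref{one} for $u$ at index $j-1$. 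Thus $c''$ remains $R_q$-polychromatic with fewer blocks, completing the contradiction. Iterating any of these reductions until no color has two blocks produces the desired simply-ordered (quasi-simply-ordered) coloring with the same $k$ colors.
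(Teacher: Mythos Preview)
Your proposal is correct and follows essentially the same approach as the paper: an extremal argument on the number of blocks in the inherited vertex coloring, using a block-shift operation for the strict-inequality witnesses and the $Z$-based witnesses, and the local swap at positions $j+1,j+2$ for the paired-equality case \ref{2Fone}\ref{three} in $R_q$. Your justification that the swap preserves the witness for color $u$ (via $|M_u(j-1)|>(j-1+q)/2$) is exactly the paper's argument, with slightly more detail supplied; the only imprecision is the phrase ``the equality \dots\ would be violated'' in the $j=n-2$ case---what is actually violated is the assumed failure of \ref{2Fone}\ref{one} at index $n-1$ (or $n$), but your conclusion is correct.
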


% section Ordering Lemmas (end)

\section{Optimal Polychromatic Colorings} % (fold)
\label{sec:optimal_polychromatic_colorings}

The seven following colorings are all optimal $F_q,R_q$, or $C_q$ polychromatic colorings for various values of $q$ and $n$.  Each of them is simply-ordered or quasi-simply-ordered.  We describe the color classes for each, and give a formula for the polychromatic number $k$ in terms of $q$ and $n$.

\subsection{$F_q$-polychromatic coloring $\pfq$ of $E(K_n)$ (even $n-q\geq 2$).}  % (fold)
\label{subsec:_k_1f_polychromatic}
		
		Let $q$ be nonnegative and $n-q$ positive and even with $k$ a positive integer such that
		\begin{equation}\label{n_eq_F}
			(q+1)(2^k-1)\leq n<(q+1)(2^{k+1}-1).
		\end{equation}
		Let $\varphi_{F_q}$ be the simply-ordered edge $k$-coloring with colors $1,2,\ldots,k$ and inherited vertex $k$ coloring of $\varphi'_{F_q}$ with successive color classes $M_1, M_2,\ldots, M_k$, moving left to right such that $\abs{M_i}=2^{i-1}(q+1)$ if $i<k$ and $\abs{M_k}=n-\sum_{i=1}^{k-1}\abs{M_i}=n-(2^{k-1}-1)(q+1)$.  We have $k\leq \log_2\frac{n+q+1}{q+1}<k+1$ so $\pfq=k=\floor{\log_2\frac{n+q+1}{q+1}}$.

% subsection _pfq_polychromatic (end)

\subsection{$R_q$-polychromatic coloring $\varphi_{R_q}$ ($q\geq 2$)} % (fold)
\label{sec:_prq_polychromatic_qgeq_2_}

If $q\geq 2$, $n\geq q+3$ and $n$ and $k$ are such that \eqref{n_eq_F} is satisfied, we let $\varphi_{R_q}=\varphi_{F_q}$ (same color classes), giving us the same formula for $k$ in terms of $n$.

% subsection _prq_polychromatic_qgeq_2_ (end)

\subsection{$C_q$-polychromatic coloring $\varphi_{C_q}$, ($q\geq 2$).} % (fold)
\label{subsec:_k_hc_polychromatic_coloring}
		
		If $q\geq 2$, $n\geq q+3$ and
		\begin{equation}\label{n_eq_C}
			(2^{k}-1)q+2^{k-1}<n\leq (2^{k+1}-1)q+2^k
		\end{equation}
		let $\varphi_{C_q}$ be the simply-ordered edge $k$-coloring with colors $1,2,\ldots,k$ and inherited vertex $k$ coloring $\varphi'_{C_q}$ with successive color classes $M_1,M_2,\ldots,M_k$ of sizes given by:

		\begin{align*}
			\abs{M_1}&=q+1\\
			\abs{M_i}&=2^{i-1}q+2^{i-2} \rm{\ if\ }i\in[2,k-1]\\
			\abs{M_k}&=n-\sum_{i=1}^{k-1}\abs{M_i}=n-2^{k-1}q-2^{k-2}
		\end{align*}
		From equation \eqref{n_eq_C} we get $\pcq = k=\floor{\log_2\frac{2(n+q-1)}{2q+1}}$.

%	subsection _k_hc_polychromatic_coloring (end)

\subsection{$R_0$-polychromatic coloring $\varphi_{R_0}$ ($q=0$).} % (fold)
\label{subsec:_k_2f_polychromatic_coloring}

If $n\geq 3$ and $2^{k-1}-1\leq n<2^{k-1}$ let $\varphi_{R_0}$ be the quasi-simply-ordered coloring with $\abs{X}=3$ and color class sizes $\abs{M_1}=\abs{M_2}=1$ and $\abs{M_3}=n-2$ if $3\leq n\leq 6$, and if $n\geq 7$:

\begin{align*}
	\abs{M_1}&=\abs{M_2}=\abs{M_3}=1\\
	\abs{M_i}&=2^{i-2} \rm{\ if\ }i\in[4,k-1]\\
	\abs{M_k}&=n-\sum_{i=1}^k-1\abs{M_i}=n-2^{k-2}+1
\end{align*}
From this, we get $\operatorname{P_{R_0}}=k=1+\floor{\log_2(n+1)}$ where $n\geq 3$.
 
% subsection _k_2f_polychromatic_coloring (end)

\subsection{$C_0$-polychromatic coloring $\varphi_{C_0}$ ($q=0$)} % (fold)
\label{sec:_c_0_polychromatic_q_0_}

If $n\geq 3$ and $3\cdot 2^{k-3}<n\leq 3\cdot 2^{k-2}$ let $\varphi_{C_0}$ be the quasi-simply-ordered coloring with $\abs{X}=3$ and color class sizes $\abs{M_1}=\abs{M_2}=1$ and $\abs{M_3}=n-2$ if $3\leq n\leq 6$, and if $n\geq 7$:

\begin{align*}
	\abs{M_1}&=\abs{M_2}=\abs{M_3}=1\\
	\abs{M_i}&=3\cdot 2^{i-4} \rm{\ if\ }i\in[4,k-1]\\
	\abs{M_k}&=n-\sum_{i=1}^{k-1}\abs{M_i}=n-3\cdot 2^{k-4}
\end{align*}
From this, we get $\operatorname{P_{C_0}}=k=\floor{\log_2\frac{8(n-1)}{3}}$ where $n\geq 4$.

% subsection _c_0_polychromatic_q_0_ (end)

\subsection{$R_1$-polychromatic colring $\varphi_{R_1}$ ($q=1$)} % (fold)
\label{sec:_r_1_polychromatic_q_1_}

If $n\geq 4$ and $3\cdot 2^{k-1}-2\leq n<3\cdot 2^k-2$ let $\varphi_{R_1}$ be the quasi-simply-ordered coloring with $\abs{X}=4$ and color class sizes $\abs{M_1}=2$ and $\abs{M_2}=n-2$ if $4\leq n\leq 9$, and if $n\geq 10$:

\begin{align*}
	\abs{M_1}&=\abs{M-2}=2\\
	\abs{M_i}&=3\cdot 2^{i-2} \rm{\ if\ }i\in[3,k-1]\\
	\abs{M_k}&=n-\sum_{i=1}^{k-1}\abs{M_i}=n-3\cdot2^{k-2}+2
\end{align*}
From this, we get $\operatorname{P_{R_1}}=k=\floor{\log_2\frac{2(n+2)}{3}}$ where $n\geq 4$.

% subsection _r_1_polychromatic_q_1_ (end)

\subsection{$C_1$-polychromatic coloring $\varphi_{C_1}$ ($q=1$)} % (fold)
\label{sec:_c_1_polychromatic_q_1_}

If $n\geq 4$ and $5\cdot 2^{k-2}\leq n< 5\cdot 2^{k-1}$ let $\varphi_{C_1}$ be the quasi-simply-ordered coloring with $\abs{X}=4$ and color class sizes $\abs{M_1}=\abs{M_2}=2$ and $\abs{M_3}=n-4$ if $4\leq n\leq 9$ and change every edge of color $3$ to color $2$, and if $n\geq 10$:

\begin{align*}
	\abs{M_1}&=\abs{M_2}=2\\
	\abs{M_i}&=5\cdot 2^{i-3} \rm{\ if\ }i\in[3,k-1]\\
	\abs{M_k}&=n-\sum_{i=1}^{k-1}\abs{M_i}=n-5\cdot 2^{k-3}+1
\end{align*}

From this, we get $\operatorname{P_{C_1}}=k=\floor{\log_2\frac{4n}{5}}$ where $n\geq 4$.

% subsection _c_1_polychromatic_q_1_ (end)

% section optimal_polychromatic_colorings (end)

\section{Proof of Theorem \ref{theorem:one} on Matchings} % (fold)
\label{sec:proof_of_theorem:one}
We prove Theorem \ref{theorem:one}.  This proof is similar to the proof of Theorem 1 in \cite{previous_paper}.  Let $k=\pfq(n)$ be the polychromatic number for  $1$-factors spanning $n-q$ vertices in $G=K_n=(V,E)$.  Among all $F_q$-polychromatic colorings of $K_n$ with $k$ colors we choose ones that are $X$-ordered for a subset  $X$ (possibly empty) of the largest possible size, and, of these, choose a coloring $c$ whose restriction to $V\setminus X$ has the largest possible maximum monochromatic degree. Let $v$ be a vertex of maximum monochromatic degree, $r$,  in $c$ restricted to $G[V\setminus X]$, let the majority color on the edges incident to $v$ in $V\setminus X$ be color $1$.  By the maximality of $\abs{X}$, there is a vertex $u$ in $V\setminus X$ such that $c(uv)\neq 1$.  Assume $c(uv)=2$.  If every $1$-factor spanning $n-q$ vertices containing $uv$ had another edge of color $2$, then the color of $uv$ could be changed to $1$, resulting in a $F_q$-polychromatic coloring where $v$ has a larger maximum monochromatic degree in $V\setminus X$, a contradiction.  Hence, there is a $1$-factor $F$ spanning $n-q$ vertices in which $uv$ is the only edge with color $2$ in $c$.

Let  $c(vy_i)=1$, $y_i\in V\setminus X$, $i=1, \ldots, r$.  Note that for each $k\in[r]$, $y_k$ must be in $F$.  If not, then $F-{uv}+{vy_k}$ is a $1$-factor spanning $n-q$ vertices with no edge of color $2$ (since $uv$ was the unique edge of color $2$ in $F$ and $vy_k$ is color $1$). For each $i\in[r]$, let $y_i w_i$ be the edge of $F$ containing $y_i$ (perhaps $w_i=y_j$ for some $j\neq i$).  See Figure \ref{fig:1Fswitch}.  We can get a different $1$-factor $F_i$ by replacing the edges $uv$ and $y_i w_i$ in  $F$ with edges  $v y_i$ and $u w_i$.  Since $F_i$ must have an edge of color $2$ and $c(v y_i)=1$, we must have $c(u w_i)=2$ for each $i\in[r]$.

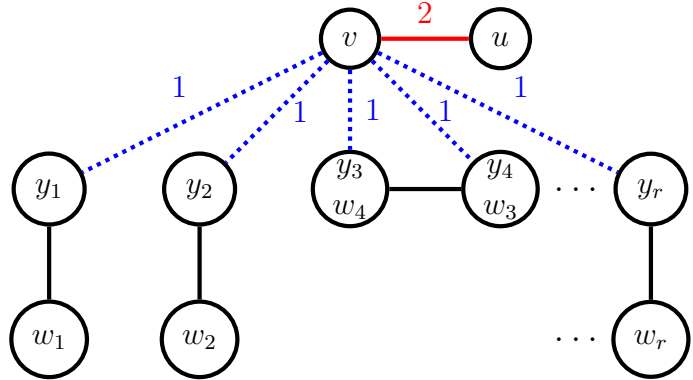
\begin{figure}[htbp]
	\centering
		\begin{tikzpicture}[every text node part/.style={align=center},scale=1,inner sep=1.75mm]
			\node[circle,ultra thick,draw=black,fill=white] (v) at (5,4) {$v$};
			\node[circle,ultra thick,draw=black,fill=white] (u) at (7,4) {$u$};
			\node[circle,ultra thick,draw=black,fill=white] (y1) at (1,2) {$y_1$};
			\node[circle,ultra thick,draw=black,fill=white] (y2) at (3,2) {$y_2$};
			\node[circle,ultra thick,draw=black,fill=white,inner sep=.3mm] (y3) at (5,2) {$y_3$\\$w_4$};
			\node[circle,ultra thick,draw=black,fill=white,inner sep=.3mm] (y4) at (7,2) {$y_4$\\$w_3$};
			\node (dots1) at (8,2) {\large $\ldots$};             
			\node[circle,ultra thick,draw=black,fill=white] (yr) at (9,2) {$y_r$};
			\node[circle,ultra thick,draw=black,fill=white] (w1) at (1,0) {$w_1$};
			\node[circle,ultra thick,draw=black,fill=white] (w2) at (3,0) {$w_2$};
			\node (dots2) at (8,0) {\large $\ldots$};
			\node[circle,ultra thick,draw=black,fill=white] (wr) at (9,0) {$w_r$};
			
			\draw[ultra thick, red] (v) -- node[above] {2} (u);
			
			\draw[ultra thick, blue,dotted] (v) -- node[above left] {1} (y1);
			\draw[ultra thick, blue,dotted] (v) -- node[right] {1} (y2);
			\draw[ultra thick, blue,dotted] (v) -- node[right] {1} (y3);
			\draw[ultra thick, blue,dotted] (v) -- node[right] {1} (y4);
			\draw[ultra thick, blue,dotted] (v) -- node[above right] {1} (yr);
			
			\draw[ultra thick, black] (y3) -- (y4);
			\foreach \i in {1,2,r} {
				\draw[ultra thick, black] (y\i) -- (w\i);
			}
		\end{tikzpicture}
	\caption{Maximum polychromatic degree in an $F_q$-polychromatic coloring}
	\label{fig:1Fswitch}
\end{figure}

If $w_i\in X$ for some $i$ then, since $c$ is $X$-constant, $c(w_iy_i) = c(w_iu) =2$,  so $y_i w_i$ and $uv$ are two edges of color $2$ in $F$, a contradiction. So, $w_i\in V\setminus X$. 
Thus $c(u v)=c(uw_1) = \cdots =c(uw_r) = 2$, and the monochromatic degree of $u$ in $V\setminus X$ is at least $r+1$, larger than that of $v$, a contradiction. 
    Hence $X=V$, $c$ is ordered, and, by Lemma \ref{O2SO}, there exists a simply-ordered $F_1$-polychromatic coloring $c_s$ with $k$ colors.  By Lemma \ref{orderedPClemma}$\ref{OPC-1F}$, if $M_1,M_2,\ldots,M_k$ are the successive color classes, moving left to right, of the inherited vertex coloring $c'_s$, then $\abs{M_t}\geq 2^{t-1}(q+1)$ for $t=1,2,\ldots,k$.  Since this inequality holds with equality for $t=1,2,\ldots,k-1$ for the inherited vertex-coloring $\pfq$, the number of color classes of $c_s$ cannot be greater than that of $\pfq$, so $k\leq \floor{\log_2 \frac{n+q+1}{q+1}}$.
\qed

% subsection proof_of_theorem_one (end)

\section{$C_q$-polychromatic Numbers 1 and 2} % (fold)
\label{sec:_c_q_polychromatic_numbers_1_and_2}

The following theorem is a special case of a theorem of Faudree and Schelp.

\begin{theorem}\cite{Faudree:1974}\label{FS}
	Let $s\geq 5$ be an integer and let $c(s)$ denote the smallest integer $n$ such that in any 2-coloring of the edges of $K_n$ there is a monochromatic $s$-cycle.  Then $c(s)=2s-1$ if $s$ is odd and $c(s)=\frac{3}{2}s-1$ if $s$ is even.
\end{theorem}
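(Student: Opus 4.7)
The plan is to establish the lower and upper bounds for $c(s)$ separately, with the lower bound coming from two explicit extremal colorings and the upper bound requiring classical extremal/pancyclicity machinery.

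For the lower bound when $s$ is odd, I would exhibit a 2-coloring of $K_{2s-2}$ with no monochromatic $s$-cycle. Partition $V(K_{2s-2})$ into two classes $A,B$ each of size $s-1$; color all edges within $A$ or within $B$ red and all edges between $A$ and $B$ blue. The red graph is a disjoint union of two cliques $K_{s-1}$, so its longest cycle has length $s-1<s$. The blue graph is $K_{s-1,s-1}$, which is bipartite and therefore contains only even cycles, ruling out an $s$-cycle since $s$ is odd. Hence $c(s)\geq 2s-1$. For even $s$, I would instead take $K_{(3s/2)-2}$ with a partition into $A$ of size $s-1$ and $B$ of size $(s/2)-1$, coloring edges inside $A$ red and all other edges blue. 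Red cycles live in $K_A$ and have length at most $s-1$. In any blue cycle, two consecutive vertices cannot both lie in $A$ (no blue edges there), so if such a cycle meets $A$ in $a$ vertices and $B$ in $b$ vertices, then $a\leq b\leq (s/2)-1$, giving length $a+b\leq s-2<s$. Hence $c(s)\geq (3s/2)-1$.

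For the upper bound I would argue as follows. Let $n=2s-1$ (odd case) or $n=(3s/2)-1$ (even case) and consider an arbitrary 2-coloring of $E(K_n)$. By pigeonhole, one color class — say red, with graph $R$ — satisfies $|E(R)|\geq\tfrac{1}{2}\binom{n}{2}$. A computation shows $|E(R)|$ exceeds the Erd\H{o}s--Gallai threshold $\tfrac{1}{2}(s-1)(n-1)$, guaranteeing a red cycle of length at least $s$. To promote this to a cycle of length exactly $s$, I would invoke Bondy's meta-theorem that a Hamiltonian-type dense graph is pancyclic unless it is bipartite, handling the exceptional bipartite case by switching attention to the blue graph (whose complement of a balanced bipartite graph is two cliques, which contain $s$-cycles once $n\geq 2s-1$).

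The remaining step is a stability-type argument: if neither red nor blue contains an $s$-cycle, then the coloring must be close to one of the two extremal constructions displayed above, which forces $n\leq 2s-2$ or $n\leq (3s/2)-2$, contradicting the hypothesis on $n$. In practice this requires inspecting the structure of the Erd\H{o}s--Gallai extremal graphs (unions of a clique with a few pendant edges) and using Chv\'atal--Erd\H{o}s type connectivity bounds to pin down the component decomposition of each color class.

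The hard part will be the upper bound, and within it the tightness of the even-$s$ threshold $(3s/2)-1$. The principal obstacle is eliminating ``near-extremal'' colorings whose red and blue components are both almost but not quite one of the two constructions. Handling these cliff-edge cases is precisely where the Faudree--Schelp analysis becomes delicate, and a clean treatment likely needs the stability versions of Erd\H{o}s--Gallai and of Bondy's pancyclicity theorem rather than only their extremal forms.
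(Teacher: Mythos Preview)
This theorem is not proved in the paper; it is quoted from Faudree and Schelp and used as a black box. The paper records only the lower-bound constructions (a red $K_{s-1,s-1}$ for odd $s$, a red $K_{s/2-1,\,s-1}$ for even $s$, with all other edges blue), and your lower-bound constructions are equivalent to these up to swapping the two colors and, in the even case, moving the edges inside $B$ from one color to the other --- a harmless change, since your argument that blue cycles then have length at most $s-2$ is correct.

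Your upper-bound sketch, however, has a concrete gap in the even case. With $n=\tfrac{3}{2}s-1$ the denser color class has at least $\tfrac{1}{2}\binom{n}{2}=\tfrac{n(n-1)}{4}$ edges, but the Erd\H{o}s--Gallai threshold for forcing a cycle of length at least $s$ is $\tfrac{(s-1)(n-1)}{2}$, and since $n<2(s-1)$ for every $s\ge 4$ the inequality goes the wrong way: the denser color need not contain any cycle as long as $s$. (For $s=6$, $n=8$ the denser color has at least $14$ edges while the threshold is $17.5$.) So the pigeonhole-plus-Erd\H{o}s--Gallai opening move already fails before you reach the pancyclicity step. Even in the odd case, where the edge count does barely clear the Erd\H{o}s--Gallai bar, Bondy's pancyclicity theorem requires a Hamiltonian graph with at least $n^2/4$ edges, and you have established neither Hamiltonicity of the color class nor that edge count; the passage from ``cycle of length $\ge s$'' to ``cycle of length exactly $s$'' is therefore not justified. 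The actual Faudree--Schelp argument proceeds by a detailed structural case analysis (building on earlier cycle-length results of Bondy and of Woodall) rather than a single density/pigeonhole stroke, and the stability step you allude to at the end is in fact where essentially all of the work lies.
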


Faudree and Schelp actually determined all values of $c(r,s)$, the smallest integer $n$ such that in any coloring of the edges of $K_n$ with red and blue, there is either a red $r$-cycle or a blue $s$-cycle.  Their theorem extended partial results and confirmed conjectures of Bondy and Erd\H{o}s \cite{Bondy:1972} and Chartrand and Schuster \cite{ChartrandSchuster} (who showed $c(3)=c(4)=6$).  The coloring of $K_{2s-2}$ to prove the lower bound for $s$ odd is a copy of $K_{s-1,s-1}$ of red edges with all other edges blue, while for $s$ even it's a red $K_{\frac{s}{2}-1,s-1}$ with all other edges blue.

\begin{proof}[Proof of Theorem \ref{theorem:five}]

By Theorem \ref{FS}, if $s\geq 5$ is odd then there is a polychromatic 2-coloring of $K_n$ if and only if $n\leq 2s-2=2(n-q)-2$, so if and only if $n\geq 2q+2$.  If $s\geq 5$ is even then there is a polychromatic 2-coloring if and only if $n\leq \frac{3}{2}s-2=\frac{3}{2}(n-q)-2$, so if and only if $n\geq 3q+4$.  Hence if $n\in[2q+2,3q+2]$ then $\pcq(n)=1$ if $n-q$ is even and $\pcq(n)=2$ if $n-q$ is odd.  The smallest value of $n$ for which there is a simply ordered $C_q$-polychromatic 2-coloring is $n=3q+3$, so there does not exist one if $n-q$ is odd and $n\leq 3q+2$.

\end{proof}

We remark that the only values for $q\geq2$ and $n$ such that there is no optimal simply-ordered $C_q(n)$-polychromatic coloring of $K_n$ are the ones given in Theorem \ref{theorem:five} ($n\in[2q+2,3q+2]$ and $n-q$ is odd), and $q=2$, $n=5$ (two monochromatic $C_5$'s is a coloring of $K_5$ with no monochromatic $C_3$'s).

% section _c_q_polychromatic_numbers_1_and_2 (end)

\section{Proofs of Theorem \ref{theorem:six} and Lemmas on Long Cycles} % (fold)
\label{sec:proofs_of_theorem_and_lemmas_on_long_cycles}

We will need some results on the existence of long cycles in bipartite graphs.

\begin{theorem}[Jackson \cite{Jackson:1985}]\label{Jackson}
	Let $G$ be a connected bipartite graph with bipartition $V(G)=S\cup T$ where $\abs{S}=s$, $\abs{T}=t$, and $s\leq t$.  Let $m$ be the minimum degree of a vertex in $S$ and $p$ be the minimum degree of a vertex in $T$.  Then $G$ has a cycle with length at least $\min\{2s,2(m+p-1)\}$.
\end{theorem}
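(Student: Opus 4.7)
The plan is to proceed by contradiction using the classical longest-cycle plus rotation strategy, adapted to the bipartite setting. Suppose $G$ has no cycle of length at least $L:=\min\{2s,\,2(m+p-1)\}$, and let $C$ be a longest cycle in $G$, of length $2\ell$ (even, since $G$ is bipartite). Then $\ell<s$ and $\ell<m+p-1$. My aim is to produce either a Hamiltonian cycle on $S\cup (\text{suitable subset of }T)$ of length $\geq 2s$, or a cycle strictly longer than $2\ell$, in either case contradicting the choice of $C$.

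First I would set up a ``long path extending into $C$''. Since $\ell<s$, some $u\in S$ is not on $C$, and since $G$ is connected we may pick a shortest path $Q$ from $u$ to $V(C)$. Using $Q$ together with one of the two $C$-arcs at the attachment vertex, I would build a path $P=w_0w_1\cdots w_N$ in $G$ with $N\ge 2\ell$ and with $w_0\in S\setminus V(C)$. Among all longest paths of the form just described I would then fix $P$ to also maximize some secondary quantity (for instance, the number of edges of $P$ lying on $C$). By maximality of $P$, every neighbor of $w_0$ and of $w_N$ lies on $P$.

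The core of the argument is a Pósa-type rotation from $w_0$: whenever $w_0w_i\in E(G)$, we can replace $P$ by $w_{i-1}w_{i-2}\cdots w_0 w_i w_{i+1}\cdots w_N$, a longest path with the same vertex set and the same endpoint $w_N$ but new starting endpoint $w_{i-1}$. Iterating produces a set $R\subseteq V(P)$ of ``reachable'' left-endpoints with $|R|\ge m$ (since $u$ has degree at least $m$ and its neighbors in $P$ generate $R$), and the bipartite structure ensures $R$ lies entirely in one part. A symmetric rotation at $w_N$ yields a corresponding set of size at least $p$. The key closing step is that if for any reachable pair of endpoints $(a,b)$ of a longest path, $a$ and $b$ are adjacent, we obtain a cycle of length $N+1>2\ell$, contradicting maximality of $C$. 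Counting neighbors along $P$ and using $\ell<m+p-1$ forces the existence of such an adjacent pair, yielding the contradiction.

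The hard part, and the step where bipartiteness is crucial, is the bookkeeping: making sure that the rotated endpoints and their target neighbors respect the bipartition (so that the parity works out for the closing edge to exist in $G$), and that in the case $\ell<s$ the rotation set $R$ genuinely has size at least $m$ (the neighbors of $w_0$ that rotation-analysis produces must all be distinct from $w_N$ and must not themselves be rotation-blocked). Once these parity/placement issues are handled, the inequality $\ell+1\le m+p-1$ is exactly what pigeonholes the two reachable endpoint sets into producing an edge that closes a longer cycle. Since this is Jackson's theorem as cited, the excerpt uses it as a black box, and a fully detailed proof along these lines is carried out in \cite{Jackson:1985}.
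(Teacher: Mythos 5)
The paper does not prove this statement at all: it is Jackson's theorem, imported verbatim from \cite{Jackson:1985} and used as a black box (its only role here is to feed Lemma \ref{disjoint_union}). So there is no in-paper proof to compare against, and your closing remark that the result ``is carried out in \cite{Jackson:1985}'' is exactly the stance the paper itself takes.

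As a standalone proof attempt, however, your write-up has a genuine gap: the entire content of the theorem is concentrated in the two steps you defer. First, the claim that the rotation set at $w_N$ has size at least $p$ presupposes $w_N\in T$, but nothing in your construction controls which side of the bipartition the terminal endpoint lands in; if $w_N\in S$ you only get a set of size $m$ at that end, and the bound $2(m+p-1)$ is no longer within reach by this count. Second, the final sentence --- ``counting neighbors along $P$ and using $\ell<m+p-1$ forces the existence of such an adjacent pair'' --- is an assertion of the theorem, not an argument: in a bipartite graph the two reachable endpoint sets live in prescribed parts, the closing edge must join opposite parts, and the standard P\'osa bound controls $\abs{N(R)}$ in terms of $\abs{R}$ rather than directly producing an edge between the two endpoint sets. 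Jackson's actual proof handles precisely this interaction (and the case distinction between the $2s$ and $2(m+p-1)$ outcomes) with considerably more care than a one-line pigeonhole. If the intent is to cite the result, say so and stop; if the intent is to prove it, the parity/placement bookkeeping you flag as ``the hard part'' is not optional detail but the theorem itself.
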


\begin{theorem}[Rahman, Kaykobad, Kaykobad \cite{Rahman:2013}]\label{Rahman}
	Let $G$ be a connected $m$-regular bipartite graph with $4m$ vertices.  Then $G$ has a Hamiltonian cycle.
\end{theorem}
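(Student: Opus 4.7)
My plan is to combine Jackson's theorem (stated just above) with a cycle-extension argument that leverages the $m$-regularity and bipartiteness. Because $G$ is $m$-regular and bipartite on $4m$ vertices, summing degrees on each side forces both parts $A$ and $B$ to have size $2m$. Applying Jackson's theorem with $s=t=2m$ and both minimum degrees equal to $m$ produces a cycle $C$ in $G$ of length at least $\min\{4m,\, 2(m+m-1)\}=4m-2$. Since bipartite cycles have even length, either $|C|=4m$ (and we are done) or $|C|=4m-2$.

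Assume $|C|=4m-2$. Then exactly two vertices lie off $C$, and balancing the two sides of the bipartition forces one of them to be a vertex $u\in A$ and the other a vertex $v\in B$. Write $C=x_1x_2\cdots x_{4m-2}$ with $x_i\in A$ iff $i$ is odd; the goal is to reroute $C$ through $u$ and $v$ to obtain a Hamiltonian cycle. Because a single vertex cannot be spliced into one edge of a bipartite cycle (its two required neighbors lie in different parts), the reroute must either remove two edges of $C$ and reconnect the resulting paths via $u$ and $v$, or, if $uv\in E(G)$, remove a single edge and reconnect via the path $u$--$v$. In the two-edge reroute, the required conditions are that the two chosen edges of $C$ have both of their $B$-endpoints in $N(u)$ and both of their $A$-endpoints in $N(v)$. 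When $uv\notin E(G)$, each of $u,v$ has $m$ neighbors on $C$, and a short union bound on the $4m-2$ edges of $C$ against the at most $2(m-1)$ ``bad'' $B$-endpoints and at most $2(m-1)$ ``bad'' $A$-endpoints leaves at least two good edges, which suffice.

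The main obstacle is the case $uv\in E(G)$, where $u$ and $v$ each have only $m-1$ neighbors on $C$ and the naive union bound is too weak. To handle this case I would use the one-edge reroute that threads through the edge $uv$, searching for a single edge $x_ax_{a+1}$ of $C$ with parity consistent with $x_a\in N(u)$ and $x_{a+1}\in N(v)$ (or the symmetric variant with $u$ and $v$ exchanged); if this fails, the extremal structure forced on $N(u)$ and $N(v)$ together with connectedness and $m$-regularity should be exploited via a P\'osa-style rotation of $C$ to produce a different $(4m-2)$-cycle whose outside pair is non-adjacent, reducing to the easy case. The regularity hypothesis is essential throughout, since it is what makes the endpoint-neighborhoods on $C$ dense enough for the reroute to succeed at the tight Jackson bound.
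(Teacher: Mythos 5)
The paper does not prove this statement at all: it is imported as a black box from Rahman, Kaykobad, and Kaykobad \cite{Rahman:2013} and invoked only once, in the proof of Lemma \ref{disjoint_union}. So the question is whether your argument stands on its own, and it does not; there are two concrete gaps. The first is in the case you call easy, $uv\notin E(G)$. Your union bound shows that at least $(4m-2)-2(m-1)-2(m-1)=2$ edges of $C$ are ``good'' (meaning the $B$-endpoint lies in $N(u)$ and the $A$-endpoint lies in $N(v)$), but two good edges do not suffice. Writing $C=x_1x_2\cdots x_{4m-2}$, deleting $x_ax_{a+1}$ and $x_bx_{b+1}$ and reattaching through $u$ and $v$ yields a single cycle only if $x_{a+1},x_{b+1}\in N(u)$ and $x_a,x_b\in N(v)$ (or the fully symmetric condition), i.e., only if the two good edges are traversed in the same direction relative to the bipartition as one goes around $C$. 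If one good edge runs $A\to B$ and the other runs $B\to A$, the two $B$-endpoints available to $u$ are the two ends of the \emph{same} arc of $C$, so the reconnection closes each arc into its own cycle and produces a $2$-factor with two components rather than a Hamiltonian cycle. Counting within each orientation class separately only guarantees $(2m-1)-(m-1)-(m-1)=1$ good edge of each orientation, so the stated count does not close even this case.

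The second gap is that the case $uv\in E(G)$ is a plan rather than a proof. There $u$ and $v$ each have only $m-1$ neighbors on $C$, the analogous count guarantees no good edge at all (at most $4m-2$ edges against up to $4m$ incidences with bad endpoints), and the proposed repair --- a P\'osa-style rotation producing a different $(4m-2)$-cycle whose excluded pair is non-adjacent --- is neither carried out nor obviously available: rotations change the endpoints of a Hamiltonian path of $C$, not which two vertices of $G$ are excluded from a long cycle, and finding a $(4m-2)$-cycle that omits a different, non-adjacent pair is essentially the original problem again. The extremal structure when everything fails (where the neighborhoods of $u$, $v$, and the vertices of $C$ are forced toward the two-disjoint-$K_{m,m}$ configuration, which connectivity must then rule out) is exactly where the real work of \cite{Rahman:2013} lies. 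For the purposes of this paper the citation should simply be kept; a self-contained proof would need both of these points repaired.
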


\begin{lem}\label{disjoint_union}
	Let $B$ be a bipartite graph with vertex bipartition $S,T$ where $\abs{S}=s$, $\abs{T}=t$, and $s\leq t$.  Suppose each vertex in $T$ has degree $m$ and each vertex in $S$ has degree $t-m$.  Then $B$ has a $2s$-cycle unless $s=t=2m$ and $B$ is the disjoint union of two copies of $K_{m,m}$.
	
	\begin{proof}
		Suppose $s<t$.  Summing degrees in $S$ and $T$ gives us $s(t-m)=tm$, so
		\[
			m=\frac{st}{s+t}>\frac{st}{2t}=\frac{s}{2}
		\]
		so $B$ is connected.  By Theorem \ref{Jackson}, $B$ has a $2s$-cycle, since $2[m+(t-m)-1]=2(t-1)\geq 2s$.  If $s=t$, then $B$ is an $m$-regular graph with $4m$ vertices.  If $B$ is connected then, by Theorem \ref{Rahman}, it has a $2s$-cycle.  If $B$ is not connected then clearly it is the disjoint union of two copies of $K_{m,m}$.
	\end{proof}
\end{lem}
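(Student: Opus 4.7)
The plan is to begin by translating the regularity hypotheses into a single equation for $m$. Summing degrees on the two sides of the bipartition yields $s(t-m)=tm$, which rearranges to $m=\frac{st}{s+t}$. This identity will control both the connectedness of $B$ and the comparison between $m$ and $s/2$. I would then split the argument into the two cases $s<t$ and $s=t$.

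For the case $s<t$, the first step is to rule out disconnection. If $B$ decomposed into components with bipartitions $(S_i,T_i)$, then every neighbor of a $T$-vertex would lie within its own component, forcing $\abs{S_i}\geq m$ for each $i$; summing over at least two components gives $s\geq 2m$. But $m=\frac{st}{s+t}>\frac{st}{2t}=\frac{s}{2}$ whenever $s<t$, a contradiction. With $B$ known to be connected, Jackson's theorem (Theorem \ref{Jackson}) applies: the minimum degree in $S$ is $t-m$ and in $T$ is $m$, so $B$ contains a cycle of length at least $\min\{2s,\,2(t-m+m-1)\}=\min\{2s,\,2(t-1)\}=2s$, using $s\leq t-1$.

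For $s=t$, the formula collapses to $m=s/2=t/2$, and $B$ becomes an $m$-regular bipartite graph on $4m$ vertices. If $B$ is connected, Theorem \ref{Rahman} directly supplies a Hamiltonian cycle, which is a $2s$-cycle. Otherwise, the same component inequalities give $\abs{S_i}\geq m$ and $\abs{T_i}\geq m$, and since $\sum\abs{S_i}=\sum\abs{T_i}=2m$, there must be exactly two components with $\abs{S_i}=\abs{T_i}=m$. Since each $T_i$-vertex already has $m$ neighbors in an $S_i$ of size $m$, each component is forced to be a complete bipartite graph $K_{m,m}$, which is precisely the exceptional case in the statement.

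The only genuinely delicate step is the connectedness argument in the $s<t$ case: one has to notice that a $T$-vertex carries all $m$ of its neighbors into the same component, which forces each part of a split to be at least $m$, and then compare this to the sharp formula $m=\frac{st}{s+t}$. Once connectedness is established, the two cited bipartite cycle theorems do the remaining work.
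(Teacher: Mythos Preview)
Your proof is correct and follows essentially the same route as the paper's: derive $m=\frac{st}{s+t}$ from double counting, use $m>s/2$ to get connectedness when $s<t$ and then apply Jackson, and in the $s=t$ case invoke Rahman for connected $B$ while handling the disconnected case directly. The only difference is that you spell out the connectedness argument and the structure of the disconnected case explicitly, whereas the paper simply asserts ``so $B$ is connected'' and ``clearly it is the disjoint union of two copies of $K_{m,m}$.''
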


We say that a cycle $H'$ of length $n-q$ is obtained  from a cycle $H$ of length $n-q$  by a {\it twist} of disjoint  edges $e_1$ and $e_2$ of $H$ if $E(H)\setminus \{e_1, e_2\} \subseteq E(H')$, i.e. we remove $e_1, e_2$ from $H$ and introduce two new edges to make the resulting graph a cycle. Note that the choice of the two edges to add is unique (due to connectedness), however, both choices would result in a $2$-regular subgraph.

One main difference between the definitions of $C_q(n)$ and $R_q(n)$ is that for the former, we consider only cycles of length precisely $n-q$, whereas, in the latter, we consider all $2$-regular subgraphs spanning \emph{at least} $n-q$ vertices.  This is because we can prove Theorem \ref{extension} for cycles, however, a similar result for $2$-regular subgraphs remains elusive (see Conjecture \ref{2-regular_conjecture}).

\subsection{Proof of Theorem \ref{theorem:six}} % (fold)
\label{subsec:proof_of_theorem_theorem_six}

		Suppose not.  Let $m$ be an integer in $[j,n-1]$ such that every $m$-cycle gets all colors but there is an $(m+1)$-cycle $H$, $v_1v_2,\ldots,v_{m+1}v_1$ which does not have an edge of color $t$.  Then $c(v_i v_{i+2})=t$ for all $i$, where the subscripts are read mod $(m+1)$, because otherwise, there is an $m$-cycle with no edge of color $t$.
		
		\begin{case}
			If $m+1$ is odd, then $v_1 v_3 v_5\cdots v_{m+1} v_2 v_4\cdots v_{m-2} v_1$ is an $m$-cycle with at most two colors, since all edges except possibly $v_{m-2} v_1$ have color $t$.  This is impossible.
		\end{case}
		
		\begin{case}
			Suppose $m+1$ is even.  Then $c_E = v_2 v_4\cdots v_{m+1} v_2$ and $c_O=v_1 v_3\cdots v_m v_1$ are $\frac{m+1}{2}$-cycles with all edges of color $t$.  Suppose $H$ has a chord $v_j v_{j+r}$ with color $t$ for some $j$ and odd integer $r$ in $[3,m-2]$.  Then $v_{j+2} v_{j+4} \cdots v_{j-2} v_j v_{j+r} v_{j+r+2}\cdots v_{j+r-4}$ is a path with $m$ vertices (missing $v_{j+r-2}$) and all edges of color $t$, so there is an $m$-cycle with at most two colors, which is impossible.  Hence if $v_i$ is a vertex in $c_E$ and $v_j$ is a vertex in $c_O$, then $v(v_i v_j)\neq t$.
			
			We claim that for each $j$ and even integer $s$, $c(v_j v_{j+s})=t$.  If not, then $v_j v_{j+s} v_{j+s+1} \cdots \ab v_{j-3} v_{j-2} v_{j+s-1} v_{j+s-2}\cdots\ab v_{j+1}v_j$ is an $m$-cycle (missing $v_{j-1}$) with no edge of color $t$ (note $c(v_{j-2}v_{j+s-1})\ab\neq t$ because $j-2$ and $j+s-1$ have different parities).  Hence, the vertices of $c_E$ and $c_O$ each induce a complete graph with $\frac{m+1}{2}$ vertices and all edges of color $t$, and there are no other edges of color $t$ in $K_n$.
			
			If there is a color $w$, different than $t$, such that there exist two disjoint edges of color $w$, then it is easy to find an $m$-cycle with two edges of color $w$ and the rest of color $t$.  If there do not exist two such edges of color $w$, then all edges of color $w$ are incident to a single vertex $x$, so any $m$-cycle with $x$ incident to two edges of color $t$ does not contain an edge of color $w$ (these exist since $\frac{m+1}{2}\geq 3$).\hfill\qedsymbol
		\end{case}

% subsection proof_of_theorem_ref_theorem_six (end)

We remark that the statement in Theorem \ref{theorem:six} would be false without the requirement that there be at least three colors.  If $m\geq 3$ is odd, then two vertex disjoint complete graphs each with $\frac{m+1}{2}$ vertices and all edges of color $t$ with all edges between them of color $w$ has an $(m+1)$-cycle with all edges of color $w$, while every $m$-cycle has edges of both colors.  This is the reason for the difference between odd and even values of $n-q$ in Theorem \ref{theorem:five}.  The statement would also be false with three colors if $j=3$ and $n=4$.

% section proofs_of_theorem_and_lemmas_on_long_cycles (end)

\section{Main Lemmas and Proofs of Theorems} % (fold)
\label{Theorems}

We now state and prove the three main lemmas needed for the proofs of Theorems \ref{theorem:two}, \ref{theorem:three}, and \ref{theorem:four}.

		\begin{lem}\label{max-vertex}
			\hfill
			\begin{enumerate}[label=(\alph*)]
				\item\label{mvert1} Let $\sH\in\{R_q(n),C^*_q(n)\}$.  Of all optimal $\sH$-polychromatic colorings, let $\varphi$ be one which is $X$-ordered on a (possibly empty) subset $X$ of $V(K_n)$ of maximum size and, of these, such that $G_m=K_n[Y]$ has a vertex $v\in Y$ of maximum possible monochromatic degree $d$ in $G_m$ where $Y=V(K_n)\setminus X$, $\abs{Y}=m$, and $d<(m-1)$.  If $v$ is incident in $G_m$ to $d$ edges of color 1 and $u\in Y$ is such that $\varphi(vu)=2$, then $v$ is a $(1,2)$-max vertex in $G_m$ and $u$ is a $(2,t)$-max vertex in $G_m$ for some color $t$ (possibly $t=1$).
				\item\label{mvert2} The same is true if $X\neq \emptyset$ and $\varphi$ is nearly $X$-ordered.
			\end{enumerate}

			\begin{proof}[Proof of \ref{mvert1}]
				Let $y_1,y_2,\ldots,y_d\in Y$ be such that $\varphi(vy_i)=1$.  Let $H\in C_q^*$ or $H\in R_q$ be such that $uv$ is the only edge of color 2.  There must be such an $H$ otherwise we could change the color of $uv$ from $2$ to $1$, giving an $\sH$-polychromatic coloring with monochromatic degree greater than $d$ in $G_m$. \Lightning.  Orient the edges of $H$ to get a directed cycle or $2$-regular graph $H'$ where $\vv{uv}$ is an arc.

				If $y_i\in H'$ then the predecessor $w_i$ of $y_i$ in $H'$ must be such that $\varphi(w_i u)=2$, because otherwise we can twist $uv$ and $w_i y_i$ to get an $(n-q)$-cycle (if $H\in C_q^*$) or a $2$-regular graph (if $H\in R_q$) with no edge of color 2. Note that $w_i$ must be in $Y$ because otherwise, since $\varphi$ is $X$-constant, $\varphi(w_i u)=\varphi(w_i y_i)=2$, contradicting the assumption that $uv$ is the only edge in $H$ of color 2.

				Suppose $y_i\not\in H$ for some $i\in[d]$.  If $\varphi(y_i u)\neq 2$, then $J=(H\setminus\{uv\})\cup\{vy_i,y_iu\}$ has no edge of color $2$.  This is impossible if $H\in R_q$, because $J$ is a $2$-regular graph spanning $n-q+1$ vertices.  If $H\in C_q^*$, then $J$ is an $(n-q+1)$-cycle with no edge of color $2$, so by Theorem \ref{theorem:six}, since the polychromatic number of $H$ is at least $3$, there exists an $(n-q)$-cycle which is not polychromatic, a contradiction.  Hence $\varphi(y_i u)=2$ in either case.

				Thus, for each $i\in[d]$, either $y_i\not\in H$ and $\varphi(y_i u)=2$, or $y_i\in H$ and $\varphi(w_i u)=2$ where $w_i$ is the predecessor of $y_i$ in $H'$.  That gives us $d$ edges in $G_m$ of color $2$ which are incident to $u$.  Since $v$ has maximum monochromatic degree in $G_m$, it follows that $v=w_i$ for some $i$ (otherwise $uv$ is a different edge of color $2$ incident to $u$) and it also follows that no edge in $G_m$ incident to $v$ can have color $t$ where $t\not\in\{1,2\}$.  This is because if $vz$ were such an edge, as shown above, then either $z\in H$ and $\varphi(w'u=2)$ where $w'$ is the predecessor of $z$ in $H'$, or $z\not\in H$ and $\varphi(zu)=2$.  In either case we get $d+1$ edges of color $2$ in $G_m$ incident to $u$, a contradiction.  So $v$ is a $(1,2)$-max-vertex and $u$ is a $(2,t)$-max-vertex for some color $t$.
				
				The proof of \ref{mvert2} is exactly the same.
			\end{proof}
		\end{lem}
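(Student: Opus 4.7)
The plan is to mimic the twist-and-recolor argument used in the proof of Theorem \ref{theorem:one}, adapted from matchings to the cycle and $2$-regular settings. First I would locate some $H\in\sH$ in which $uv$ is the only edge of color $2$. Such an $H$ must exist, for otherwise every member of $\sH$ containing $uv$ would carry another color-$2$ edge, and recoloring $uv$ to $1$ would yield an $\sH$-polychromatic coloring that is still $X$-ordered on the same $X$ but in which the monochromatic degree of $v$ in $G_m$ is at least $d+1$, contradicting the maximality of $d$. Having fixed such an $H$, I would orient it so that $\vv{uv}$ is one of its arcs, and for each color-$1$ neighbor $y_i$ of $v$ (with $y_i\in Y$) I would extract a color-$2$ edge from $u$ into $Y$.

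When $y_i\in V(H)$, let $w_i$ be its in-neighbor in $H$. The twist that removes $\{u,v\}$ and $\{w_i,y_i\}$ and inserts $\{v,y_i\}$ and $\{u,w_i\}$ yields another member of $\sH$ on the same vertex set (a $(n-q)$-cycle for $\sH=C_q^*$, a $2$-regular subgraph for $\sH=R_q$). Since this twisted $H$ must also contain a color-$2$ edge and $\varphi(vy_i)=1$, we must have $\varphi(uw_i)=2$. I would also verify $w_i\in Y$: otherwise $X$-constancy (or $Z$-constancy, in the nearly $X$-ordered case) gives $\varphi(w_iy_i)=\varphi(w_iu)=2$, producing a second color-$2$ edge in $H$. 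When $y_i\notin V(H)$, I would swap the edge $uv$ out of $H$ and splice in the path $v,y_i,u$; if $\varphi(y_iu)\neq 2$, the resulting graph has no color-$2$ edge. For $\sH=R_q$ this is already a $2$-regular subgraph spanning $n-q+1$ vertices and directly contradicts $R_q$-polychromaticity, while for $\sH=C_q^*$ it is an $(n-q+1)$-cycle, and Theorem \ref{theorem:six}, applicable because the polychromatic number is at least $3$ by definition of $C_q^*$, then yields some $(n-q)$-cycle missing color $2$, again a contradiction. Hence $\varphi(y_iu)=2$ in either sub-case.

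Together, the two cases produce $d$ pairwise distinct color-$2$ edges from $u$ into $Y$, one per $y_i$. Including $uv$ itself, a naive count would give $d+1$ color-$2$ edges at $u$ in $G_m$, exceeding the maximum monochromatic degree $d$; the only escape is that $uv$ coincides with one of these edges, i.e.\ $v=w_i$ for some $i$. This pins the monochromatic degree of $u$ at exactly $d$, so $u$ is a $(2,t)$-max vertex for some color $t$. Repeating exactly the same argument with a hypothetical edge $vz$ in $G_m$ of a third color $t'\notin\{1,2\}$ would add yet another color-$2$ neighbor of $u$ and push past $d$, so no such $z$ exists and $v$ is indeed $(1,2)$-max. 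Part (b) requires no genuinely new ideas: all manipulations take place inside $G_m=K_n[Y]$, and the only external use of $X$ is through $X$-constancy, which in the nearly $X$-ordered regime is still guaranteed by the $Z$-quasi-ordered structure (every vertex of $Z$ sends edges of a single main color to $V\setminus Z$).

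The main obstacle is the sub-case $y_i\notin V(H)$ in the cycle setting $\sH=C_q^*$: simply lengthening the cycle by one vertex does not, on its face, contradict polychromaticity, since $\sH$ restricts attention to cycles of length exactly $n-q$. Bridging from ``some $(n-q+1)$-cycle misses color $2$'' back to ``some $(n-q)$-cycle misses color $2$'' is precisely the content of Theorem \ref{theorem:six}, and this is the critical point where the three-color hypothesis baked into the definition of $C_q^*$ is used; absent that hypothesis, the whole argument would collapse on the cycle side.
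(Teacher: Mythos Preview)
Your proposal is correct and follows essentially the same argument as the paper's proof: locate an $H$ in which $uv$ is the unique color-$2$ edge, orient it, twist (or splice) to force a color-$2$ edge from $u$ for each $y_i$, count to conclude $v=w_i$ for some $i$ and that $u$ has monochromatic degree exactly $d$, and rule out a third color at $v$ by the same counting. You also correctly identify the one nontrivial step, namely the appeal to Theorem~\ref{theorem:six} in the $C_q^*$ sub-case $y_i\notin V(H)$, and the role of the three-color hypothesis there.
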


\begin{lem}\label{structurelemma}Let $n\geq 7$ and $\sH\in\{R_q(n),C_q(n)\}$.  If there does not exist an optimal {\sH}-polychromatic coloring of $K_n$ with maximum monochromatic degree $n-1$, then one of the following holds.
		\begin{enumerate}[label=\alph*)]
			\item\label{structure-one} $\sH=C_q(n)$, $n-q$ is odd and $n\in[2q+2,3q+2]$ (and $\pcq(n)=2$).
			\item $q=0$ and there exists an optimal $\sH$-polychromatic coloring which is $Z$-quasi-ordered with $\abs{Z}=3$.
			\item $q=1$ and there exists an optimal $\sH$-polychromatic coloring which is $Z$-quasi-ordered with $\abs{Z}=4$.
		\end{enumerate}
	\begin{proof}\let\qed\relax
		First assume that $\sH=C_q(n)$ and that $q\geq 2$ and $n$ are such that $\pcq(n)\leq 2$.  If $n-q$ is even then, by Theorem \ref{theorem:five}, there is a $C_q$-polychromatic 2-coloring if and only if $n\geq 3q+3$.  Since $3q+3$ is the smallest value of $n$ such that the simply-ordered $C_q$-polychromatic coloring $\varphi_{C_q}$ uses two colors, if $\pcq(n)\leq 2$ and $n-q$ is even, then there is an optimal simply-ordered $C_q$-polychromatic coloring, and this coloring has a vertex (in fact $q+1$ of them) with monochromatic degree $n-1$.
		
		If $n-q$ is odd then, by Theorem \ref{theorem:five}, there is a $C_q$-polychromatic 2-coloring if and only if $n\geq 2q+2$.  Since there is a simply-ordered $C_q$-polychromatic 2-coloring if $n\geq 3q+3$, that means that if $n-q$ is odd, $\pcq(n)\leq 2$ and $n\not\in[2q+2,3q+2]$ then there is a simply-ordered $C_q$-polychromatic coloring.  Thus if $\pcq(n)\leq 2$, there is an optimal simply-ordered $C_q$-polychromatic coloring, and hence one with maximum monochromatic degree $n-1$, unless $n-q$ is odd and $n\in[2q+2,3q+2]$, which are the conditions for \ref{structure-one}.
		
		Now let $\sH\in\{R_q(n),C^*_q(n)\}$ and suppose there does not exist an optimal $\sH$-polychromatic coloring of $K_n$ with maximum monochromatic degree $n-1$.  Of all optimal $\sH$-polychromatic colorings of $K_n$, let $\varphi$ be the one with maximum possible monochromatic degree $d$ (so $d<n-1$).
\end{proof}
\end{lem}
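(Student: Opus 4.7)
The plan is to continue the proof for $\sH \in \{R_q(n), C_q^*(n)\}$ (so $\pcq(n) \geq 3$ in the cycle case), with $\varphi$ an optimal $\sH$-polychromatic coloring whose maximum monochromatic degree is $d < n-1$, and to show that this forces $q \in \{0, 1\}$ together with one of the two $Z$-quasi-ordered structures. I would first apply Lemma \ref{max-vertex}\ref{mvert1} with $X = \emptyset$, $Y = V(K_n)$ to obtain a $(1,2)$-max vertex $v$ and a $(2,t)$-max vertex $u$ with $\varphi(uv) = 2$. Since $u$ also attains the maximum monochromatic degree, the lemma can then be reapplied at $u$; this iteration is the main engine of the argument.

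Split into two cases according to whether $t = 1$ or $t \neq 1$. If $t \neq 1$, then $u$ is incident only to colors $2$ and $t$, with $t \notin \{1,2\}$. Pick any $w$ with $\varphi(uw) = t$ and reapply Lemma \ref{max-vertex}\ref{mvert1} at $u$ (playing the role of the $(1,2)$-max vertex, with color $2$ in the role of $1$ and $t$ in the role of $2$) to obtain a $(t,s)$-max vertex $w$. A short case analysis on $\varphi(vw), \varphi(uw), \varphi(uv)$ and the possible values of $s$ should force $s = 1$ and make the three edges inside $\{u,v,w\}$ carry colors $1, 2, t$ exactly in the pattern required by the first structure of a $Z$-quasi-ordered coloring. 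Setting $Z = \{u,v,w\}$ and repeatedly invoking Lemma \ref{max-vertex}\ref{mvert2} to enlarge $X$, I would conclude that $\varphi$ restricted to $V \setminus Z$ is ordered; hence $\varphi$ is quasi-ordered with $\abs{Z} = 3$. Feeding this back into Lemma \ref{orderedPClemma}\ref{HCtwo} or \ref{2Ftwo} forces $q = 0$, giving case (b).

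If $t = 1$, then $u$ is $(2,1)$-max, so both $u$ and $v$ are incident only to colors $1$ and $2$. Picking $w$ with $\varphi(uw) = 1$ and reapplying Lemma \ref{max-vertex} at $u$ produces a $(1,s)$-max vertex $w$. The two-color structure of $v$'s edges, combined with the chain of max vertices, should prevent a new main color from appearing, so $s$ must be $2$; one further application of the lemma should then produce a fourth max vertex $z$. Matching the resulting edge pattern on $\{v,u,w,z\}$ against the two possibilities in the definition of $Z$-quasi-ordered forces the second structure, with $\abs{Z} = 4$ and the prescribed ``crossed'' coloring. Iteration via Lemma \ref{max-vertex}\ref{mvert2} again yields a quasi-ordered $\varphi$, and Lemma \ref{orderedPClemma}\ref{HCthree} or \ref{2Fthree} then forces $q = 1$, yielding case (c).

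The main obstacle will be the combinatorial bookkeeping that pins the discovered structure down to exactly one of the two canonical forms and no larger or more exotic configuration. The key leverage is that every $(a,b)$-max vertex is incident to only two colors, so each new max vertex produced by iterating Lemma \ref{max-vertex} severely restricts the previously discovered ones; combined with $n \geq 7$ and the hypothesis $\pcq(n) \geq 3$ (which legitimizes the use of Theorem \ref{theorem:six} inside the proof of Lemma \ref{max-vertex}), these constraints should collapse into exactly the configurations in cases (b) and (c), and simultaneously force $q \in \{0, 1\}$.
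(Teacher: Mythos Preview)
Your plan has the right opening move (apply Lemma~\ref{max-vertex} with $X=\emptyset$ and iterate), but there are two genuine gaps that would block the argument.

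First, the case split ``$t\neq 1$ versus $t=1$'' does not align with ``$q=0$ versus $q=1$'' in the direction you need. In the paper the causality runs the other way: it is the value of $q$ that determines the max-vertex pattern, via swaps that use the $q$ vertices \emph{missing} from $H$. For $q\ge 1$ (Claim~\ref{max-vertex-q=1}) one shows that if $u$ were $(2,3)$-max then a vertex outside $H$ but outside $B$ could be substituted for $u$, killing the unique color-$2$ edge; this forces every max vertex to be $(1,2)$- or $(2,1)$-max. For $q=0$ (Claim~2) one shows the \emph{opposite}: if all max vertices used only colors $1,2$, a bipartite cycle argument produces a $2$-regular subgraph (or cycle) missing color $2$. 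None of this is visible from the bare value of $t$ at the first $u$ you pick; for $q=0$ that first $u$ could perfectly well be $(2,1)$-max. And crucially, your scheme never reaches a contradiction when $q\ge 2$: the paper disposes of $q\ge 2$ inside Claim~\ref{q>1-XnotEmpty} by a delicate reconstruction (stitching the paths $W_i$ into two $s$-cycles, using that $q\ge 2$ lets you pick $x_1\in T_1$ and $x_q\in T_2$), not by reading it off Lemma~\ref{orderedPClemma}. That lemma only says which ordered/quasi-ordered colorings are polychromatic for a \emph{given} $q$; it cannot force $q$ to take a particular value.

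Second, and more seriously, finding three (or four) max vertices does \emph{not} make $\varphi$ $Z$-quasi-ordered. The paper shows (Claim~\ref{structure}, item~(8)) that in the $q=0$ case there are $m=n-1-d$ vertices in each of $A,B,D$, and for $q=1$ that $|S|=|T|=2m$; so the max-vertex locus can be large, and $d$ can be well below $n-2$. A $Z$-quasi-ordered coloring by definition has each $z\in Z$ incident to $n-2$ edges of its main color, which fails whenever $m>1$. The paper therefore needs a separate \emph{modification} step (Claims~\ref{q=0_optimal-quasi-ordered} and~7) that recolors edges to push $m$ down to $1$ while preserving polychromaticity. Your ``repeatedly invoke Lemma~\ref{max-vertex}\ref{mvert2}'' cannot substitute for this: part~\ref{mvert2} presupposes a nearly $X$-ordered coloring, which you have not established, and in any case iterating the lemma only discovers more max vertices---it does not shrink the set of them. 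The reduction to $|Z|\in\{3,4\}$ is where the real work lies.
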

		\begin{claim}\label{dsize}
			$d>\frac{n-1}{2}$.
			\begin{proof}\let\qed\relax
				Since there are only two colors at a max-vertex, certainly $d\geq \frac{n-1}{2}$.  Assume $d=\frac{n-1}{2}$ (so n is odd) and that $x$ is a max-vertex where colors $i$ and $j$ appear.  Then $x$ is both an $i$-max and $j$-max vertex so, by Lemma \ref{max-vertex}, each vertex in $V$ is a max-vertex.

				Suppose there are more than 3 colors, say colors $i,j,s,t$ are all used.  If $i$ and $j$ appear at $x$ then no vertex $y$ can have colors $s$ and $t$, because there is no color for $xy$.  So the sets of colors on the vertices is an intersecting family of $2$-sets.  Since there are at least 4 colors, the only way this can happen is if some color, say $i$, appears at every vertex.  Let $n_{ij}, n_{is}$, and $n_{it}$ be the number of $(i,j)$-max, $(i,s)$-max, and $(i,t)$-max vertices with $n_{ij}\leq n_{is} \leq n_{it}$.  Then $n_{ij}<\frac{n}{2}$ (in fact, $n_{ij}\leq \frac{n}{3}$).  If $x$ is an $(i,j)$-max vertex and $y$ is an $(i,s)$-max vertex, then $c(xy)=i$.  Hence the number of edges of color $j$ incident to $x$ is at most $n_{ij}-1<\frac{n-2}{2}<d$, a contradiction.
			
				Now suppose there are precisely 3 colors.  Let $A, B, C$ be the set of all $(1,2)$-max, $(2,3)$-max, and $(1,3)$-max vertices, respectively, with $\abs{A}=a, \abs{B}=b$, and $\abs{C}=c$.  All edges from a vertex in $A$ to a vertex in $B$ have color 2, from $B$ to $C$ have color 3, from $A$ to $C$ have color 1; internal edges in $A$ have color 1 or 2, in $B$ have color 2 or 3, in $C$ have color 1 or 3.  We clearly cannot have $a,b,$ or $c$ greater than $\frac{n-1}{2}$ so, without loss of generality, we can assume $a\leq b\leq c\leq \frac{n-1}{2}$ and $a+b+c=n$.
			
				Consider the graph $F$ formed by the edges of color 1 or 2.  Vertices of $F$ in $B$ or $C$ have degree $\frac{n-1}{2}$, while vertices in $A$ have degree $n-1$. Since $a\leq c$ we have $a\leq \frac{n-b}{2}$. The internal degree in $F$ of each vertex in $B$ is $\frac{n-1}{2}-a\geq \frac{n-1}{2}-\frac{n-b}{2}=\frac{b-1}{2}$.  As is well known (Dirac's theorem), that means there is a Hamiltonian path within $B$.  Similarly there is one within $C$.  If $a\geq 2$, that makes it easy to construct a Hamiltonian cycle in $F$.  If $a=1$ we must have $b=c=\frac{n-1}{2}$, so $F$ is two complete graphs of size $\frac{n+1}{2}$ which share one vertex.  This graph has a spanning 2-regular subgraph if $n\geq 7$ (a 3-cycle and a 4-cycle if $n=7$), so no $R_q$-polychromatic coloring with 3 colors for any $q\geq 0$ if $n\geq 7$.
			
				If $a=1$ and $b=c=\frac{n-1}{2}$ consider the subgraph of all edges of colors 1 or 3.  It consists of a complete bipartite graph with vertex parts $A\cup B$ and $C$, with sizes $\frac{n+1}{2}$ and $\frac{n-1}{2}$, plus internal edges in $C$.  Clearly this graph has an $(n-1)$-cycle, but no Hamiltonian cycle.  Hence there can be a $C_q$-polychromatic 3-coloring only if $q=0$.  However, the $C_0$-polychromatic coloring $\varphi_{C_0}$ uses at least 4 colors if $n\geq 7$, so there is no optimal one with maximum monochromatic degree $\frac{n-1}{2}$.
			\end{proof}
		\end{claim}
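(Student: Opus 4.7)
The plan is to proceed by contradiction, assuming $d = (n-1)/2$. Since every max vertex in $K_n$ has only its majority and minority colors on its $n-1$ incident edges, the bound $d \geq (n-1)/2$ is automatic, so the assumption forces equality and $n$ odd. Any $(1,2)$-max vertex $v$ then has exactly $(n-1)/2$ edges of each color, making $v$ simultaneously $(1,2)$-max and $(2,1)$-max; applying Lemma \ref{max-vertex} in each orientation elevates every neighbor of $v$---hence every vertex of $K_n$---to a max vertex with exactly two colors on its incident edges. The $2$-element color sets at the vertices then form a pairwise intersecting family, because the color on any edge between two vertices must lie in the color set of each endpoint.

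Next I would split on the total number of colors $k$. If $k \geq 4$, a pairwise intersecting family of $2$-subsets of a set of size $\geq 4$ must share a common element (the only other intersecting $2$-uniform family is the ``triangle'' on three elements, which uses only three colors). Calling the shared color $1$ and letting $n_{1j}$ count the $(1,j)$-max vertices, the smallest class satisfies $n_{1j} \leq n/3$. Every color-$j$ edge lies inside the $(1,j)$-max class, so the color-$j$ degree of such a vertex is at most $n_{1j} - 1 \leq n/3 - 1 < (n-1)/2 = d$, a contradiction.

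The substantive case is $k = 3$. Let $A, B, C$ be the $(1,2)$-, $(2,3)$-, $(1,3)$-max vertex classes with sizes $a \leq b \leq c$ summing to $n$, each at most $(n-1)/2$; the inter-class edges are forced to the shared color. In the subgraph $F$ of color-$1$ and color-$2$ edges, the inequality $a \leq (n-b)/2$ (from $a \leq c$) gives internal minimum degree $\geq (b-1)/2$ in $F[B]$, and symmetrically in $F[C]$, so a Dirac-type argument yields Hamilton paths inside $B$ and inside $C$. For $a \geq 2$ I would splice these paths through two distinct $A$-vertices (using color-$2$ $A$-$B$ edges, color-$1$ $A$-$C$ edges, and the complete subgraph of $F$ on $A$) to build a Hamilton cycle of $F$ missing color $3$. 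This cycle is itself a $2$-regular spanning subgraph, killing the $R_q$ case; for $C_q^*$, Theorem \ref{theorem:six} (applicable since $k = 3$ and $n - q \geq 4$) upgrades the $n$-cycle missing color $3$ to an $(n-q)$-cycle missing color $3$, also a contradiction.

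The main obstacle is the sub-case $a = 1$, which forces $b = c = (n-1)/2$ and makes $F$ two copies of $K_{(n+1)/2}$ glued at the single $A$-vertex. For $\sH = R_q$ with $n \geq 7$, each $K_{(n+1)/2}$ has $\geq 4$ vertices and admits a cycle; their disjoint union is a spanning $2$-regular subgraph missing color $3$, contradicting polychromaticity. For $\sH = C_q^*$ the $F$-based graph is disconnected, so I would instead examine the color-$1$-or-$3$ subgraph, a complete bipartite $K_{(n+1)/2, (n-1)/2}$ (between $A \cup B$ and $C$) augmented by internal edges, whose unbalanced bipartition blocks a Hamilton cycle but still admits an $(n-1)$-cycle missing color $2$. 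For $q \geq 1$ this $(n-1)$-cycle has length $\geq n-q$, and Theorem \ref{theorem:six} forces an $(n-q)$-cycle missing color $2$, contradicting polychromaticity; for $q = 0$ the formula $\operatorname{P_{C_0}} = \lfloor \log_2 \tfrac{8(n-1)}{3}\rfloor \geq 4$ (for $n \geq 7$) rules out a $3$-coloring being optimal. The pivot to a different color pair, together with invoking Theorem \ref{theorem:six} rather than exhibiting an explicit $(n-q)$-cycle, is the key technical step in this final branch.
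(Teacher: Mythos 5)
Your proposal is correct and follows essentially the same route as the paper's proof: the same reduction to every vertex being a max-vertex, the same intersecting-family argument for four or more colors, and the same $A,B,C$ decomposition with Dirac's theorem, the $a\geq 2$ versus $a=1$ split, and the pivot to the color-$1$-or-$3$ subgraph in the final branch. The only difference is that you make explicit the appeals to Theorem \ref{theorem:six} in the $C_q^*$ branches, which the paper leaves largely implicit; this is a presentational refinement rather than a different argument.
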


		\begin{claim}
			If $q=0$, then, up to relabeling the colors, there is a $(1,2)$-max-vertex, a $(2,3)$-max-vertex and a $(3,1)$-max-vertex.
			\begin{proof}\let\qed\relax
				Assume that every max-vertex has majority color either $1$ or $2$.  Then $u$ must be a $(2,1)$-max-vertex.  This is because by Lemma \ref{max-vertex}, if it were a $(2,t)$-max-vertex for some third color $t$, and $c(u z)=t$, then $z$ would have to be a $t$-max-vertex, a contradiction.  Hence, every max-vertex is either a $(1,2)$-max-vertex or a $(2,1)$-max-vertex.  Let $S$ be the set of all $(1,2)$-max-vertices, $T$ be the set of all $(2,1)$-max-vertices, and $W=V\setminus(S\cup T)$.  Edges within $S$ and from $S$ to $W$ must have  color $1$ (because any minority color edge at a max-vertex is incident to a max-vertex of that color), edges within $T$ and from $T$ to $W$ must have color $2$, and all edges between $S$ and $T$ must have color $1$ or $2$.  If $\abs{S}=s$ and $\abs{T}=t$ and $m=n-1-d$, then each vertex in $S$ is adjacent to $m$ vertices in $T$ by edges of color $2$ (and adjacent to $t-m$ vertices in $T$ by edges of color $1$), and each vertex in $T$ is adjacent to $m$ vertices in $S$ by edges of color $1$.

				Suppose $s<t$ and consider any edge $ab$ from $S$ to $T$ of color 2.  As before, there is an $H\in\sH$ which contains $ab$, but no other edges of color 2.  Hence $H$ has no edges from $T$ to $W$.  Since $s<t$ there must be an edge of $H$ with both vertices in $T$, so it does have another edge of color 2 after all, a contradiction.  The same argument works if $t<s$ with an edge with color 1.  To avoid this, we must have $s=t=2m$.  If there is an edge from $S$ to $W$ then, again, $H$ has an internal edge in $T$, which is impossible.  Hence if $\sH=C^*_0$ then $W=\emptyset$ and every edge has color 1 or 2, which is impossible since $H$ has at least 3 colors.  If $\sH=R_0$ then the subgraph of $H$ induced by $S\cup T$ is the union of cycles.  If $m=1$ then $S\cup T$ induces a 4-cycle in $H$, two edges of each color, so $ab$ is not the only edge with color 2.  If $m\geq 2$ then two applications of Hall's Theorem gives two disjoint perfect matchings of edges of color 1 between $S$ and $T$, whose union is a 2-factor of edges of color 1 spanning $S\cup T$, which together with the subgraph of $H$ induced by $W$, produces a 2-factor $H'\in R_0$ with no edge of color 2.

				We have shown that $u$ is not a $(2,1)$-max vertex, so it must be a $(2,3)$-max vertex for some other color 3.  Say $\varphi(uz)=3$.  Then, by Lemma \ref{max-vertex}, $z$ is a $3$-max vertex.  If $\varphi(vz)=2$, then $z$ would be a $2$-max vertex. So $z$ would be both a $2$-max and a $3$-max vertex, and so $d=\frac{n-1}{2}$, a contradiction to Claim \ref{dsize}.  Hence $\varphi(vz)=1$, which means $z$ must be a $(3,1)$-max vertex.
			\end{proof}
		\end{claim}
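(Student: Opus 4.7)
The plan is to iterate Lemma \ref{max-vertex} starting from the known $(1,2)$-max-vertex $v$ and the vertex $u$ with $\varphi(uv)=2$. Lemma \ref{max-vertex} already gives that $u$ is a $(2,t)$-max-vertex for some color $t$, possibly $t=1$. I would proceed in two steps: first rule out $t=1$, and then locate a $(3,1)$-max-vertex using the color-$t$ edge incident to $u$.

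\emph{Ruling out $t=1$.} Suppose for contradiction that $u$ is $(2,1)$-max. Repeated applications of Lemma \ref{max-vertex} show that every max-vertex encountered must have majority in $\{1,2\}$. Collect the $(1,2)$-max-vertices into $S$, the $(2,1)$-max-vertices into $T$, and the remaining vertices into $W$. Lemma \ref{max-vertex} then rigidly determines almost all edge colors: edges inside $S$ are color $1$ (otherwise the other endpoint of a color-$2$ edge in $S$ would be $2$-max yet lies in $S$, contradiction); edges inside $T$ are color $2$; edges between $S$ and $W$ are color $1$ (else the $W$-endpoint would be a max-vertex); edges between $T$ and $W$ are color $2$. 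Only $S$-to-$T$ edges and internal $W$ edges remain flexible. Counting the $m=n-1-d$ minority-color edges at each max-vertex gives a bipartite-regular structure between $S$ and $T$, and then a Hall-type or Dirac-type argument in the spirit of the $a=1$ case inside Claim \ref{dsize} would produce an $\mathcal{H}$-member whose edges use only colors $1$ and $2$. This contradicts the polychromaticity of $\varphi$, which uses at least three colors. Therefore $t \neq 1$.

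\emph{Locating the $(3,1)$-max-vertex.} Relabel $t$ as color $3$, so $u$ is $(2,3)$-max, and pick $z$ with $\varphi(uz)=3$. By Lemma \ref{max-vertex} applied to the edge $uz$, $z$ is a $3$-max-vertex, say $(3,s)$-max for some color $s$. Since $v$ is $(1,2)$-max, $\varphi(vz)\in\{1,2\}$. If $\varphi(vz)=2$, Lemma \ref{max-vertex} applied to $vz$ would also force $z$ to be a $2$-max-vertex, so $z$ would have monochromatic degree $d$ in each of colors $2$ and $3$, giving $2d \leq n-1$ and contradicting $d > (n-1)/2$ from Claim \ref{dsize}. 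Hence $\varphi(vz)=1$, exhibiting a color-$1$ edge at $z$; being $3$-max with a single minority color, this forces $s=1$, so $z$ is $(3,1)$-max. The triple $(v,u,z)$ now has the required types $(1,2),(2,3),(3,1)$.

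The main obstacle is the first step: translating the tight $S$-$T$-$W$ color structure into an explicit $\mathcal{H}$-member using only two colors is where the real work sits, and it requires a careful Hall/Dirac-style construction analogous to the one inside Claim \ref{dsize}. The second step, once $t\neq 1$ is secured, is essentially a one-line application of Lemma \ref{max-vertex} together with Claim \ref{dsize}.
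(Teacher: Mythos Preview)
Your plan is correct and matches the paper's approach closely; in particular, your second step (locating the $(3,1)$-max vertex via Lemma~\ref{max-vertex} and Claim~\ref{dsize}) is exactly what the paper does. The one place where the paper is more specific than your sketch is the first step: rather than attempting a direct Hall/Dirac construction on the $S$--$T$--$W$ structure, the paper first exploits the member $H\in\sH$ in which $ab$ is the \emph{unique} color-$2$ edge to force $s=t=2m$ (and, for $C_0^*$, $W=\emptyset$), and only then applies Hall's theorem (for $R_0$, $m\ge 2$) to the color-$1$ bipartite graph between $S$ and $T$, reusing $H\!\upharpoonright\! W$ to cover $W$; without first pinning down $s=t$ and isolating $W$ via this $H$, a bare Hall/Dirac argument would have trouble handling $W$.
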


		\begin{claim}\label{structure}
			If $q=0$ then $V$ can be partitioned into sets $A,B,D,E$ where the following properties hold (see Figure \ref{fig:graphfigure}).
				\begin{enumerate}
					\item\label{C5one} All vertices in $A$ are $(1,2)$-max-vertices.
					\item\label{C5two} All vertices in $B$ are $(2,3)$-max-vertices.
					\item\label{C5three} All vertices in $D$ are $(3,1)$-max-vertices.
					\item\label{C5four} No vertex in $E$ is a max-vertex.
					\item\label{C5five} All edges within $A$, from $A$ to $D$, and from $A$ to $E$ are color 1.
					\item\label{C5six} All edges within $B$, from $B$ to $A$, and from $B$ to $E$ are color 2.
					\item\label{C5seven} All edges within $D$, from $D$ to $B$, and from $D$ to $E$ are color 3.
					\item\label{C5eight} $\abs{A}=\abs{B}=\abs{D}=m=n-1-d$.
				\end{enumerate}

				\begin{figure}[htbp]
					\centering
					\begin{tikzpicture}[every text node part/.style={align=center},scale=3,inner sep=1mm]
						%make sets A,B,D,E
						\node[circle,ultra thick,draw=black,fill=white,inner sep=6.5mm] (e) at (0,0) {$E$};
						\node[circle,ultra thick,draw=green!50!black,fill=white] (d) at (0,1.1547) {$D$\\$(3,1)$-max};
						\node[circle,ultra thick,draw=blue,fill=white] (a) at (-1,1.73205) {$A$\\$(1,2)$-max};
						\node[circle,ultra thick,draw=red,fill=white] (b) at (1,1.73205) {$B$\\$(2,3)$-max};
						%draw the lines with labels
						%loops
						\draw [ultra thick,blue] (a) .. controls (-1.85,1.85) and (-1.4,2.5) .. (a) {node [above left,pos=.5] {\large 1}};
						\draw [ultra thick,loosely dashed,red] (b) .. controls (1.4,2.5) and (1.85,1.85) .. (b) {node [above right,pos=.5] {\large 2}};
						\draw [ultra thick,dotted,green!50!black] (d) .. controls (-.5,2) and (.5,2) .. (d) {node [below,pos=.5] {\large 3}};
						\draw [ultra thick,blue] (a) -- node[below] {\large 1} (d);
						\path (a) edge [ultra thick,blue,bend right] node[below left] {\large 1} (e);
						\path (b) edge [ultra thick,loosely dashed,red,bend right] node[above] {\large 2} (a);
						\path (b) edge [ultra thick,loosely dashed,red,bend left] node[below right] {\large 2} (e);
						\draw [ultra thick,dotted,green!50!black] (d) -- node[below] {\large 3} (b);
						\draw [ultra thick,dotted,green!50!black] (d) -- node[left] {\large 3} (e);
					\end{tikzpicture}
					\caption{}
					\label{fig:graphfigure}
				\end{figure}
			\begin{proof}\let\qed\relax
				Let $A=\{x : x\textrm{ is a }(1,2)\textrm{-max vertex}\}$, $B=\{x : x\textrm{ is a }(2,3)\textrm{-max vertex}\}$, $D=\{x : x\textrm{ is a }(3,1)\textrm{-max vertex}\}$ and $E=V\setminus (A\cup B \cup D)$.  Let $x\in A$.  If $y\in A$, then $\varphi(xy)=1$ because if $\varphi(xy)=2$, then $y$ would be a $2$-max vertex.  If $y\in B$, then $\varphi(xy)=2$ because that is the only possible color for an edge incident to $x$ and $y$ and, similarly, if $y\in D$, then $\varphi(xy)=1$.

				Suppose $w$ is a max-vertex in $E$.  Then the two colors on edges incident to $w$ must be a subset of $\{1,2,3\}$, because, otherwise, it would be disjoint from $\{1,2\}$, $\{2,3\}$, or $\{1,3\}$, so there would be an edge incident to $w$ for which there is no color.  Say $1$ and $2$ are the colors at $w$.  Since $w\not\in A$, $w$ is a $(2,1)$-max vertex.  Let $z$ be a $(3,1)$-max vertex.  Then the edge $wz$ must have color 1 so, by Lemma \ref{max-vertex}, $z$ is a $1$-max vertex, a contradiction.  We have now verified \eqref{C5one}--\eqref{C5four}.  If $x\in A$ and $w\in E$ then $\varphi(xw)=1$ because if $\varphi(xw)=2$ then $w$ would be a $2$-max vertex.  Similar arguments show that if $y\in B$ then $\varphi(yw)=2$ and if $y\in D$ then $\varphi(yw)=3$.  We have now verified \eqref{C5one}--\eqref{C5seven}.

				We have shown that if $x$ is in $A$ then $\varphi(xy)=2$ if and only if $y\in B$.  That means $\abs{B}=m$, and by the same argument $\abs{A}=\abs{C}=m$ as well, completing the proof of Claim \ref{structure}.
			\end{proof}
		\end{claim}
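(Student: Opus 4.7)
The plan is to define $A$, $B$, $D$ as the sets of $(1,2)$-max, $(2,3)$-max, and $(3,1)$-max vertices respectively, and set $E = V \setminus (A \cup B \cup D)$, so that \ref{C5one}--\ref{C5three} hold by definition. Since $d > (n-1)/2$ by Claim \ref{dsize}, each vertex has exactly two incident colors, so these three sets are pairwise disjoint, and the preceding claim guarantees that all three are nonempty.

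For \ref{C5four} I would rule out max-vertices in $E$. Suppose $w \in E$ were $(i,j)$-max. To be joined by legally colored edges to vertices in each of $A, B, D$, the palette $\{i,j\}$ must meet each of $\{1,2\}, \{2,3\}, \{1,3\}$, forcing $\{i,j\} \subset \{1,2,3\}$. The only two-element subsets not already accounting for membership in $A \cup B \cup D$ correspond to the ``reversed'' types $(2,1)$-, $(3,2)$-, and $(1,3)$-max. In each case I would pick an appropriate witness from $A, B, D$: for instance, if $w$ is $(2,1)$-max and $z \in D$, then $\varphi(wz)$ is forced to equal $1$ (the unique common color, which is the minority color at $w$), and applying Lemma \ref{max-vertex} to $w$ yields that $z$ must be a $(1,t)$-max vertex, contradicting that $z$ has majority color $3$. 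The other two reversed cases are symmetric.

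The coloring items \ref{C5five}--\ref{C5seven} then fall out quickly: cross-type edges are pinned down by intersecting the two-color palettes of their endpoints (e.g.\ $A$--$B$ edges must be color $2$), and within-set edges are pinned down by Lemma \ref{max-vertex}, since $\varphi(xy)=2$ for $x,y \in A$ would make $y$ a $2$-max vertex, contradicting $y \in A$. Edges from $A,B,D$ to $E$ are handled identically: $\varphi(xw)=2$ with $x \in A$, $w \in E$ would force $w$ to be $2$-max and hence lie in $B$. Finally \ref{C5eight} is a counting step: each $y \in B$ has exactly $m = n-1-d$ incident edges whose color is not $2$, and by the just-verified coloring such edges can only go to $D$, giving $\abs{D}=m$; the symmetric counts at vertices of $D$ (color $1$ edges going only to $A$) and of $A$ (color $2$ edges going only to $B$) give $\abs{A}=\abs{B}=m$. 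The main obstacle is \ref{C5four}: excluding stray max-vertices in $E$ requires the full strength of Lemma \ref{max-vertex} combined with the nonemptiness of all three named sets, and this is the step that locks the rest of the structure into place.
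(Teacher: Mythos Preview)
Your proposal is correct and follows essentially the same route as the paper: the same definition of $A,B,D,E$, the same use of Lemma~\ref{max-vertex} to exclude the ``reversed'' max-types $(2,1)$, $(3,2)$, $(1,3)$ from $E$, the same palette-intersection and minority-color arguments for the edge colors, and the same degree count for \ref{C5eight}. The only cosmetic differences are that you make disjointness and nonemptiness of $A,B,D$ explicit up front, and you count $\abs{D}$ from a vertex of $B$ rather than $\abs{B}$ from a vertex of $A$; neither changes the argument.
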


		\begin{claim}\label{q=0_optimal-quasi-ordered}
			If $\sH\in\{C_0^*,R_0\}$, and there exists an optimal $\sH$-polychromatic coloring satisfying \eqref{C5one}--\eqref{C5eight} with $m>1$, then there exists one with $m=1$, i.e. one that is $Z$-quasi-ordered with $\abs{Z}=3$.
			\begin{proof}
				Let $A=\{a_i:i\in[m]\}, B=\{b_i:i\in[m]\}, D=\{d_i:i\in[m]\}$.  Define an edge coloring $\gamma$ by

				\begin{align*}
					\gamma(a_1 b_i)&=1\mathrm{\ if\ }i>1\\
					\gamma(b_1 d_i)&=2\mathrm{\ if\ }i>1\\
					\gamma(d_1 a_i)&=3\mathrm{\ if\ }i>1\\
					\gamma(u v)&=\varphi(u v)\mathrm{\ for\ all\ other\ }u,v\in V.\\
				\end{align*}

				It is easy to check that $\gamma$ has the structure described above with $m=1$.  We have essentially moved $m-1$ vertices from each of $A$, $B$, and $D$, to $E$.  Since $a_1, b_1,$ and $c_1$ each have monochromatic degree $n-2$, any 2-factor must have edges of colors 1,2, and 3 under the coloring $\gamma$, so if it had all colors under $\varphi$, it still does under $\gamma$.
			\end{proof}
		\end{claim}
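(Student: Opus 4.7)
The plan is to produce a coloring $\gamma$ from $\varphi$ by ``demoting'' all but one vertex from each of $A$, $B$, and $D$ into $E$, so that only three max-vertices remain (one per primary color), giving a $Z$-quasi-ordered coloring with $\abs{Z}=3$. Fix representatives $a_1\in A$, $b_1\in B$, $d_1\in D$, and re-color only the cross-edges joining these three distinguished vertices to the vertices to be demoted: set $\gamma(a_1 b_i)=1$ for $i>1$ (was color $2$), $\gamma(b_1 d_i)=2$ for $i>1$ (was color $3$), $\gamma(d_1 a_i)=3$ for $i>1$ (was color $1$), and leave every other edge of $\varphi$ unchanged.

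After this, I would check directly from Claim \ref{structure} that $Z:=\{a_1,b_1,d_1\}$ is a valid $Z$ in the first sense of the definition of $Z$-quasi-ordered: each of the three distinguished vertices has $n-2$ edges of a single color (its original main color from $\varphi$), the three main colors are distinct, and the three edges induced on $Z$ carry colors $1$, $2$, $3$, one of each. In particular the former max-vertices $a_i,b_i,d_i$ with $i>1$ each pick up an edge of a third color and lose one of their majority color, so they cease to be max-vertices and effectively join $E$.

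To verify $\gamma$ is $\sH$-polychromatic with the same number of colors, I would split by color class. For the primary colors $1,2,3$: since $q=0$, every $H\in\sH$ is $2$-regular and spans $V(K_n)$, so $H$ uses exactly two edges at $a_1$; since under $\gamma$ the only edge at $a_1$ not colored $1$ is $a_1 b_1$, at least one of those two edges is color $1$. Symmetric arguments at $b_1$ and $d_1$ recover colors $2$ and $3$. For each remaining color $t\geq 4$: Claim \ref{structure}(\ref{C5five})--(\ref{C5seven}) forces the color-$t$ edges of $\varphi$ to lie entirely inside $E$, because every edge meeting $A\cup B\cup D$ has color in $\{1,2,3\}$. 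Since $\gamma$ alters only edges meeting $A\cup B\cup D$, no color-$t$ edge is touched, so any $H\in\sH$ that contained a color-$t$ edge under $\varphi$ still does under $\gamma$.

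I expect the only real ``obstacle'' is the verification that higher colors are undisturbed; this rests entirely on the rigidity in Claim \ref{structure} that already confines colors outside $\{1,2,3\}$ to live inside $E$. Once that observation is made, the rest is automatic: the inflated monochromatic degree $n-2$ at each of $a_1,b_1,d_1$, together with the spanning property of every $H\in\sH$ when $q=0$, delivers the primary colors for free.
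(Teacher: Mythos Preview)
Your proposal is correct and follows essentially the same approach as the paper: the recoloring $\gamma$ you define is exactly the one the paper uses, and your justification that colors $1,2,3$ are forced by the monochromatic degree $n-2$ at $a_1,b_1,d_1$ in any spanning $2$-regular $H$ matches the paper's reasoning. Your treatment of colors $t\geq 4$ (observing from Claim~\ref{structure} that such edges live entirely inside $E$ and are therefore untouched by $\gamma$) is actually more explicit than the paper's, which compresses this step into the single remark ``so if it had all colors under $\varphi$, it still does under $\gamma$.''
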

		
			We remark that the coloring $\gamma$ with $m=1$ in Claim \ref{q=0_optimal-quasi-ordered} is $Z$-quasi-ordered with $\abs{Z}=3$. As we have shown, if there exists such an $R_0$-polychromatic coloring $\varphi$ with $m>1$, then there exists one with $m=1$.  However, if $m>1$ and $n>6$, a coloring $\varphi$ satisfying properties \eqref{C5one}--\eqref{C5eight} might not be $R_0$-polychromatic.  This is because if $E$ has no internal edges with color $1$, then any $2$-factor with a $2m$-cycle consisting of alternating vertices from $A$ and $B$ has no edge with color $1$.  However, the modified coloring $\gamma$ (with $m=1$) is an $R_0$-polychromatic coloring because then colors $1$, $2$, and $3$ must appear in any $2$-factor.

		\begin{claim}\label{max-vertex-q=1}
			If $q\geq 1$ then, up to relabelling colors, every max vertex is a $(1,2)$-max vertex or a $(2,1)$-max vertex.
			\begin{proof}\let\qed\relax
				As before, we assume $v$ is a $(1,2)$-max vertex, that $\varphi(uv)=2$ and that $H\in R_q$ (or $H\in C_q^*$) is such that $uv$ is the only edge of color 2.  We know that $u$ is a $(2,t)$-max vertex for some color $t$.  By way of contradiction, suppose $u$ is a $(2,3)$-max vertex.  Then we have the configuration of Figure \ref{fig:graphfigure}, with $\abs{A}=\abs{B}=\abs{D}=m$.  If $uw$ is also an edge of $H$ then $w\in D$, since otherwise $\varphi(uw)=2$.  Let $Q$ be the set of vertices not in $H$ (so $\abs{Q}=q>0$) and suppose $p\in Q$ but $p\not\in B$.  Then we can replace $u$ in $H$ with $p$ to get a 2-regular graph (cycle) with no edge of color 2.  Hence $Q\subseteq B$.  Orient the edges of $H$ to get a directed graph $H'$ where $\vv{uv}$ is an arc.  Since $\abs{B\setminus Q}<\abs{D}$, and every vertex in $D$ appears in $H'$, for some $d\in D$ and $e\not\in B$, $\vv{de}$ is an arc in $H'$.  Since $\varphi(du)=3$ and $\varphi(ev)=1$, when you twist $uv$ and $de$ you get a 2-regular graph (cycle) with no edge of color 2, a contradiction.  Hence every max-vertex is a $(1,2)$-max vertex or $(2,1)$-max vertex.

			%	If there is a $(1,2)$-max and a $(2,3)$-max-vertex then there is also a $(3,1)$-max vertex and we get Figure \ref{fig:graphfigure} with $\abs{A}=\abs{B}=\abs{C}$.  Then show the unused vertex in $H$, where $uv$ is the only edge in $H$ of color $2$, must be in $B$.  Then show that in $H'$ there must be a $C$-vertex $w$ followed by a vertex $z$ not in $B$.  Then twist $uv,wz$.  Since $\varphi(uw)\neq 2$ and $\varphi(vz)\neq 2$, the new subgraph has no edge of color $2$.
			\end{proof}
		\end{claim}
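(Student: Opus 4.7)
The plan is to argue by contradiction, mirroring the structural analysis carried out in Claim~\ref{structure} for $q = 0$ but exploiting the $q$ spare vertices outside the chosen subgraph $H$. Start as in the earlier claims: let $v$ be a $(1,2)$-max vertex, let $u$ satisfy $\varphi(uv) = 2$, and let $H \in \sH$ be an element in which $uv$ is the only color-$2$ edge (such an $H$ exists by the same recoloring argument as in Lemma~\ref{max-vertex}). By Lemma~\ref{max-vertex}, $u$ is a $(2,t)$-max vertex for some color $t$; assume for contradiction that $t = 3$, a third color.

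First I would recover the tripartite structure of Claim~\ref{structure}. Although that claim is stated only for $q = 0$, its proof invokes only the max-vertex characterizations of Lemma~\ref{max-vertex} together with the presence of max vertices of types $(1,2)$, $(2,3)$, $(3,1)$, which are supplied here by $v$, $u$, and the vertex $z$ with $\varphi(uz) = 3$. The same argument therefore yields a partition $V = A \sqcup B \sqcup D \sqcup E$ with the color pattern of Figure~\ref{fig:graphfigure} and $|A| = |B| = |D| = m = n - 1 - d$. Now let $Q = V \setminus V(H)$, so $|Q| = q \geq 1$. The first key step is to show $Q \subseteq B$: given any $p \in Q \setminus B$, the two neighbors of $u$ in $H$ are $v$ and some $w$, and since every edge from $u$ to $V \setminus D$ has color $2$ we must have $w \in D$; the tripartite pattern then forces the replacement edges $pv$ and $pw$ to lie in colors $\{1, 3\}$ regardless of whether $p$ lies in $A$, $D$, or $E$, producing an element of $\sH$ with no color-$2$ edge — a contradiction.

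The remaining step is a pigeonhole plus twist. Orient $H$ so that $\vv{uv}$ is an arc; since $Q \subseteq B$ we have $D \subseteq V(H)$ but $|B \cap V(H)| = m - q < m$, so if every out-arc from a vertex of $D$ landed in $B$, an in-degree count would force $|B \cap V(H)| \geq m$, a contradiction. Pick an arc $\vv{de}$ with $d \in D$ and $e \notin B$, and twist $uv$ and $de$ by inserting the edges $ud$ and $ve$. In the cycle case this is the unique twist that reconnects $H \setminus \{uv, de\}$ into a single $(n-q)$-cycle, and in the $2$-regular case the result is still $2$-regular on $V(H)$. The color pattern forces $\varphi(ud) = 3$ and $\varphi(ve) = 1$ (the latter because $v \in A$ and $e \in A \cup D \cup E$, whose edges to $A$ all have color $1$), while $\varphi(de) \in \{1, 3\}$, so the new element of $\sH$ carries no color $2$ — contradicting polychromaticity. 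The principal obstacle I expect is the first step: verifying that Claim~\ref{structure}'s tripartite structure really does transfer intact to $q \geq 1$ under the assumption $t = 3$; once that is in hand, the pigeonhole count and the choice of twist are essentially forced, and the rest is routine bookkeeping with the color classes.
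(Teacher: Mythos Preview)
Your proposal is correct and follows essentially the same route as the paper's proof: assume $u$ is a $(2,3)$-max vertex, invoke the tripartite structure of Claim~\ref{structure}, force $Q\subseteq B$ by replacing $u$ with any $p\in Q\setminus B$, and then use $|B\setminus Q|<|D|$ together with the in-degree constraint to find an arc $\vv{de}$ with $d\in D$, $e\notin B$, so that twisting $uv$ and $de$ eliminates color~$2$. You are in fact more explicit than the paper on two points it glosses over: that the proof of Claim~\ref{structure} relies only on Lemma~\ref{max-vertex} and the existence of $(1,2)$-, $(2,3)$-, $(3,1)$-max vertices (hence transfers to $q\ge 1$), and the in-degree pigeonhole justifying the existence of the arc $\vv{de}$.
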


\begin{claim}\label{q>1-XnotEmpty}
	If $q=1$ then, up to relabelling colors, the vertex set can be positioned into $S,T,W$ such that
	\begin{enumerate}
		\item\label{q>1-first} $S$ is the set of all $(1,2)$-max vertices
		\item $T$ is the set of all $(2,1)$-max vertices
		\item $W$ has no max vertices
		\item All internal edges in $S$ and all edges from $S$ to $W$ have color 1; all internal edges in $T$ and all edges from $T$ to $W$ have color 2
		\item\label{q>1-last} The edges of color 1 between $S$ and $T$ form two disjoint copies of $K_{m,m}$, as do the edges of color 2 (so $\abs{S}=\abs{T}=2m$, where $n-m-1$ is the maximum monochromatic degree)
	\end{enumerate}
	\begin{proof}\let\qed\relax
		By Claim \ref{max-vertex-q=1}, if $q\geq 1$, then every max vertex is a $(1,2)$ or $(2,1)$-max vertex.
		
		Let $S$ be the set of all $(1,2)$-max vertices and $T$ be the set of all $(2,1)$-max vertices, with $\abs{S}=s$ and $\abs{T}=t$, $s\leq t$, and let m be the maximum monochromatic degree.  Let $W=V(G)\setminus (S\cup T)$ and let $B$ be the complete bipartite graph with vertex bipartition $S,T$ and edges colored as they are in $G$.  So each vertex of $B$ in $S$ is incident with $m$ edges of color 2 and $t-m$ edges of color 1, and each vertex of $B$ in $T$ is incident with $m$ edges of color 1 and $s-m$ edges of color 2.  All edges of $G$ within $S$ and between $S$ and $W$ have color 1 (otherwise there would be a $(2,1)$-max vertex not in $T$) and all edges within $T$ and between $T$ and $W$ have color 2.
		
		We note that the edges of color 1 in $B$ satisfy the conditions of Lemma \ref{disjoint_union}, so $B$ has a $2s$-cycle of edges of color 1 unless $s=t=2m$ and the edges of color 1 (and those of color 2) form two disjoint copies of $K_{m,m}$.
		
		Again, let $v\in S$ and $u\in T$ be such that $c(uv)=2$, and let $H\in C_q^*(n)$ (or $H\in R_q(n)$), $q\geq 1$, be such that $uv$ is the only edge of color 2.  If $uw$ is also an edge of $H$ then $w\in S$, because otherwise $c(uw)=2$.  Hence if $z$ is a vertex of $G$ not in $H$ then $z\in T$, because otherwise we can replace $u$ with $z$ in $H$ to get $H''\in C_q^*(n)$ (or $H''\in R_q(n)$) with no edge of color 2.  That means that if $Q$ is the set of vertices of $G$ not in $H$, then $Q\subseteq T$.  Since $uv$ is the only edge in $H$ with color 2, each vertex in $T\setminus Q$ is adjacent in $H$ to two vertices in $S$, so there are $2(t-q)$ edges in $H$ between $S$ and $T$, where $q=\abs{Q}\geq t-s$.
		
		Let $M$ be the subgraph of $H$ remaining when the $2(t-q)$ edges in $H$ between $S$ and $T$ have been removed (along with any remaining isolated vertices).  If $q=t-s$ then, since every edge in $H$ incident to a vertex in $T$ goes to $S$, either $H$ is a $2s$-cycle and $W=\emptyset$ (if $H\in C_q^*(n)$) or the union of the components of $H$ which have a vertex in $T$ is a 2-regular graph spanning $S$ and $s=t-q$ vertices in $T$.  In either case, since $s<t$, we can replace the components of $H$ which intersect $T$ with the $2s$-cycle of edges of color 1 promised by Theorem \ref{Jackson}, to get an $H''\in C_q^*(n)$ (or $H''\in R_q(n)$) with no edge of color 2.  Hence $q>t-s$.
		
		Each component of $M$ is a path with at least one edge, both endpoints in $S$ with interior points in $S$ or $W$.  If a component has $j>2$ vertices in $S$, we split it into $j-1$ paths which each have their endpoints in $S$ with all interior points in $W$.  If a vertex of $S$ is an interior point in a component then it is an endpoint of two of these paths.  The number of such paths is $\frac{2(s-(t-q))}{2}=s-(t-q)>0$.
		
		We denote the paths by $P_1,P_2,\ldots,P_r$ where $r=s-(t-q)$.  For each $i$ in $[r]$ where $P_i$ has more than 2 vertices, we remove the edges containing the two endpoints (which are both in $S$), leaving a path $W_i$ whose vertices are all in $W$ (the union of the vertices in all the $W_i$'s is equal to $W$).
		
		We will now show that there cannot be a $2s$-cycle of edges of color 1 in $B$.  Suppose $J$ is such a $2s$-cycle.  Let $R=\{x_1,x_2,\ldots,x_r\}$ be the set of any $r$ vertices in $T\cap V(J)$ and let $K$ be the subgraph of $J$ obtained by removing the $r$ vertices in $R$.  For each $i\in [r]$ let $y_{ia}$ and $y_{ib}$ be the vertices adjacent to $x_i$ in $J$.  Both are in $S$ and possibly $y_{ib}=y_{ja}$ if $i\neq j$.  Now, for each $i\in[r]$, attach $W_i$ to $y_{ia}$ and $y_{ib}$ ($R_i$ can be oriented either way).  More precisely, if $W_i$ is the path $w_{i1},w_{i2},\ldots,w_{id}$ in $W$, we attach it to $K$ by adding the edges $y_{ia}w_{i1}$ and $y_{ib}w_{id}$, while if $W_i$ is empty (meaning the $i^{\rm{th}}$ component of $M$ has only two vertices, so none in $W$) we add the edge $y_{ia}y_{ib}$.  The resulting graph $H''$ has no edge of color 2, since we constructed it using only edges from $J$ and edges from $H$ within $S\cup W$.  Since $V(H'')=V(G)\setminus R$, $H''$ has $n-q$ vertices.  Clearly $H''$ is 2-regular and, if $H$ is a cycle, so is $H''$ (if $H$ is not a cycle, $H''$ will still be a cycle if $H$ does not have any components completely contained in $W$).  Thus $H''\in R_q(n)$ ($H''\in C_q^*(n)$) and has no edge of color 2, a contradiction. Hence there is no $2s$-cycle of edges of color 1 in $B$.
		
		By Lemma \ref{disjoint_union} it follows that $s=t=2m$ with the edges of color 1 forming two vertex-disjoint copies of $K_{m,m}$.  (If these two disjoint copies have vertex sets $S_1\cup T_1$ and $S_2\cup T_2$, where $S_1\cup S_2=S$ and $T_1\cup T_2=T$, then $S_1\cup T_2$ and $S_2\cup T_1$ are the vertex sets which induce two disjoint copies of $K_{m,m}$ with edges of color 2.)  We have now verified that properties \eqref{q>1-first}--\eqref{q>1-last} hold if $q\geq 1$.  We will now show we get a contradiction if $q\geq 2$.
		
		Assume $q\geq 2$.  Let $T_1$ and $T_2$ be the sets of vertices in $T$ in the two $s$-cycles of edges of color 1 ($\abs{T_1}=\abs{T_2}=\frac{s}{2}$, $T_1\cup T_2=T$).  Recall that $v\in S$, $u\in T$, and $uv$ is the only edge of $H$ of color 2.  The subgraph $M$ of $H$ defined earlier still consists of paths which can be split into paths $P_1,P_2,\ldots,P_q$ (since $r=s-t+q=q$) with endpoints in $S$ and interior points in $W$.  Let $J$ be the union of the two $s$-cycles of edges of color 1.  Choose the subset $Q$ of size $q$ so that it has at least one vertex in each of $T_1$ and $T_2$, say $Q=\{x_1,x_2,\ldots,x_q\}$ where $x_1\in T_1$ and $x_q\in T_2$.  Again, let $K$ be the subgraph obtained from $J$ by removing the vertices in $Q$.  Then, as before, the paths $W_1,W_2,\ldots,W_q$ (perhaps some of them empty) can be stitched into $K$.  We attach $W_i$ to $y_{ia}$ and $y_{ib}$ if $i\in[2,q-1]$ (just adding the edge $y_{ia}y_{ib}$ if $W_i$ is empty).  We attach $W_1$ to $y_{1a}$ and $y_{qb}$ and $W_q$ to $y_{1b}$ and $y_{qa}$, creating an $(n-q)$-cycle if no component of $H$ is contained in $W$, and a 2-regular graph spanning $n-q$ vertices if $H$ has a component contained in $W$.  There is no edge of color 2 in this graph contradicting the assumption that if $q\geq 2$ and $\mathcal{H}\in\{R_q(n),C_q^*(n)\}$ then the maximum monochromatic degree in all optimal $\mathcal{H}$-polychromatic colorings is less than $n-1$.
	\end{proof}
\end{claim}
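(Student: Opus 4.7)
The plan is to leverage Claim \ref{max-vertex-q=1} (which says every max-vertex is $(1,2)$- or $(2,1)$-max) together with Lemmas \ref{max-vertex} and \ref{disjoint_union}, and imitate the structural argument used for $q=0$ in Claim \ref{structure}. First I would define $S$, $T$, $W$ exactly as in the statement: $S$ is the set of $(1,2)$-max vertices, $T$ is the set of $(2,1)$-max vertices, and $W$ is everything else. A quick local argument (if $x \in S$ and $y \in S$ then $\varphi(xy)=1$, else $y$ would be a $2$-max vertex outside $T$; similarly for edges into $W$) establishes items (1)--(4), and forces every edge between $S$ and $T$ to have color $1$ or $2$. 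Writing $|S|=s$, $|T|=t$, $s\le t$, and letting $m$ be the maximum monochromatic degree, a degree count in each of $S$ and $T$ shows each vertex of $S$ meets $m$ color-$2$ edges and $t-m$ color-$1$ edges to $T$, and vice versa.

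Next I would analyze the bipartite graph $B$ between $S$ and $T$ restricted to color-$1$ edges. This is exactly the setup of Lemma \ref{disjoint_union}, so either $B$ contains a $2s$-cycle of color-$1$ edges, or $s=t=2m$ and the color-$1$ edges split as two disjoint copies of $K_{m,m}$ (with color-$2$ edges forming the other pair of copies, as stated in item (5)). The bulk of the work is to rule out the first alternative.

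To rule it out, pick $v \in S$, $u \in T$ with $\varphi(uv)=2$, and (using maximality of the monochromatic degree as in Lemma \ref{max-vertex}) produce $H \in \sH$ such that $uv$ is the only color-$2$ edge of $H$. As in Claim \ref{max-vertex-q=1}, any neighbor of $u$ in $H$ besides $v$ must lie in $S$, and any vertex of $G \setminus V(H)$ must lie in $T$; hence the missing set $Q$ sits inside $T$, with $|Q|=q \ge t-s$. Removing from $H$ the $2(t-q)$ edges between $S$ and $T$ leaves a collection $M$ of paths whose endpoints are in $S$ and whose interior vertices lie in $W$; after splitting components that repeatedly revisit $S$, one obtains exactly $r = s-(t-q)$ such paths $P_1,\ldots,P_r$ with interior subpaths $W_1,\ldots,W_r$ covering $W$. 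The hard part will be the reassembly: given a hypothetical $2s$-cycle $J$ of color-$1$ edges in $B$, delete any $r$ vertices $\{x_1,\ldots,x_r\} \subseteq T \cap V(J)$ to get a spanning structure $K$ whose gaps have the right endpoint-structure (each $x_i$ is replaced by two pendant vertices $y_{ia},y_{ib} \in S$), and then stitch each $W_i$ into the gap at $x_i$. The resulting graph $H''$ uses only edges from $J$ (color $1$) and from $H$ inside $S \cup W$ (colors $1$ or $3$), has no color-$2$ edge, spans $n-q$ vertices, and lies in $\sH$. This contradicts optimality and forces the two-disjoint-$K_{m,m}$ structure, completing (5).

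Finally, for the last sentence of the claim, the argument for $q \ge 2$ will be a refinement of the same stitching: now $r=q$, so take $Q$ with at least one vertex in each of $T_1, T_2$ (the two halves cut out by the disjoint $K_{m,m}$-decomposition) and attach the paths $W_1,\ldots,W_q$ so that $W_1$ bridges the endpoints $y_{1a}, y_{qb}$ across the two components and $W_q$ bridges $y_{1b}, y_{qa}$, while $W_2,\ldots,W_{q-1}$ are stitched locally. This yields an $H'' \in \sH$ with no color-$2$ edge whenever $q \ge 2$, contradicting the choice of $\varphi$ with maximum monochromatic degree $<n-1$. The main obstacle throughout is keeping the bookkeeping correct in this stitching step, in particular verifying that $H''$ is genuinely $2$-regular, connected when $H$ is a cycle, and always spans exactly $n-q$ vertices.
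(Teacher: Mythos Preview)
Your plan is essentially the paper's own proof: same definitions of $S,T,W$, same appeal to Lemma~\ref{disjoint_union} on the color-$1$ bipartite graph $B$, same ``stitching'' of the $W_i$ subpaths into the gaps of a hypothetical $2s$-cycle $J$ to manufacture an $H''\in\sH$ with no color-$2$ edge, and the same cross-bridging of $W_1,W_q$ for the $q\ge 2$ contradiction. One boundary case you glossed over but the paper treats separately is $q=t-s$ (i.e.\ $r=0$, which for $q=1$ means $t=s+1$): there your stitching degenerates since there are no paths $W_i$, and $J$ alone need not span $n-q$ vertices when $W\neq\emptyset$; the paper handles this in one line by replacing the $S\cup T$ portion of $H$ with $J$ while retaining the components of $H$ inside $W$.
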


\begin{claim}
	If $\sH\in\{C_1^*,R_1\}$ and there exists an $\sH$-polychromatic coloring satisfying \eqref{q>1-first}--\eqref{q>1-last} in Claim \ref{q>1-XnotEmpty} with $m>1$, then there exists one with $m=1$, i.e. one that is $Z$-quasi-ordered with $\abs{Z}=4$.

	\begin{proof}
		Assume there is an $R_1$-polychromatic coloring ($C_1^*$-polychromatic coloring) $c$ with $q=1$ satisfying \eqref{q>1-first} -- \eqref{q>1-last} of Claim \ref{q>1-XnotEmpty} where $s=t>2$.  Let $v$ and $x$ be vertices in $S$ and $u$ and $y$ be vertices in $T$ such that $c(vu)=c(xy)=2$ and $c(xu)=c(vy)=1$.  Let $c'$ be the coloring obtained from $c$ by recoloring the following edges (perhaps they are recolored the same color they had under $c$):
		\begin{center}
			\begin{tabular}{lll}
				$c'(vp)=1$ & for all & $p\in T\setminus\{u,y\}$\\
				$c'(xp)=1$ & for all & $p\in T\setminus\{u,y\}$\\
				$c'(zu)=2$ & for all & $z\in S\setminus\{v,x\}$\\
				$c'(zy)=2$ & for all & $z\in S\setminus\{v,x\}$\\
				$c'(zp)=3$ & for all & $p\in T\setminus\{u,y\}$ and $z\in S\setminus\{v,x\}$
			\end{tabular}
		\end{center}
		
		Since all but one edge incident to $v$ and $x$ have color 1 under $c'$, certainly every $(n-1)$-cycle contains an edge of color 1.  Similarly for $u$ and $y$ and edges of color 2.  Every edge which was recolored had color 1 or 2 under $c$, so $c'$ must be a polychromatic coloring with the same number of colors.  It has the desired form with $\abs{S}=\abs{T}=2$, so, in fact, is $Z$-quasi-ordered with $Z=\{v,x,u,y\}$.
	\end{proof}
	
\end{claim}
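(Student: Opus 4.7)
My plan is to modify $c$ locally so that only four specific vertices of $S \cup T$ remain max vertices, while preserving polychromaticness. The key structural input is part \ref{q>1-last} of Claim \ref{q>1-XnotEmpty}: the color-$1$ edges between $S$ and $T$ form two vertex-disjoint copies of $K_{m,m}$, and likewise for the color-$2$ edges. Write the two color-$2$ copies as $K_{m,m}$'s on $S_1 \cup T_1$ and $S_2 \cup T_2$, so the two color-$1$ copies sit on $S_1 \cup T_2$ and $S_2 \cup T_1$. I would pick $v \in S_1$, $u \in T_1$, $x \in S_2$, $y \in T_2$; this selection automatically yields $c(vu) = c(xy) = 2$ and $c(vy) = c(xu) = 1$.

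Next I would define $c'$ to agree with $c$ everywhere except on the bipartite graph between $S$ and $T$, by setting $c'(vp) = c'(xp) = 1$ for every $p \in T \setminus \{u,y\}$, $c'(zu) = c'(zy) = 2$ for every $z \in S \setminus \{v,x\}$, and $c'(zp) = t_0$ for every $z \in S \setminus \{v,x\}$, $p \in T \setminus \{u,y\}$, where $t_0$ is any color $\geq 3$ already used by $c$. A direct inspection gives: $v$ and $x$ are $(1,2)$-max with unique minority edges $vu$ and $xy$ respectively; $u$ and $y$ are $(2,1)$-max with unique minority edges $xu$ and $vy$ respectively; every old vertex in $S \setminus \{v,x\}$ or $T \setminus \{u,y\}$ now sees colors $1$, $2$, and $t_0$ and so is no longer a max vertex. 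The six edges inside $\{v,x,u,y\}$ realize exactly the bipartite-matching pattern required by the ``$|Z| = 4$'' case of a $Z$-quasi-ordered coloring, so $c'$ is $Z$-quasi-ordered with $Z = \{v,x,u,y\}$.

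To verify $c'$ is still $\sH$-polychromatic with the same palette, I would argue color by color. Since $q = 1$, every $H \in \sH$ spans at least $n-1$ vertices, so it meets $\{v,x\}$; at such a vertex the two $H$-edges cannot both coincide with the lone minority edge at that vertex, forcing a color-$1$ edge in $H$. Symmetrically $H$ contains a color-$2$ edge. For any color $t \geq 3$ used by $c$, Claim \ref{q>1-XnotEmpty} forces every color-$t$ edge of $c$ to lie strictly inside $W$; my recoloring touches no edge internal to $W$, so every color-$t$ edge of $c$ is still of color $t$ under $c'$, and $c$-polychromaticness for color $t$ carries over.

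The main obstacle is engineering the bipartite recoloring so as to simultaneously (a) strip every non-distinguished vertex of its max-vertex status and (b) not destroy a color class on some $H \in \sH$. Concentrating the color-$1$ and color-$2$ edges onto the four chosen vertices handles (a); the fact that no color $\geq 3$ ever lived on an $S$-$T$ or $S$/$T$-to-$W$ edge under $c$ handles (b) for colors $\geq 3$, provided some $t_0 \geq 3$ is available. The latter is automatic when $\sH = C_1^*$ by definition; the remaining two-color possibility arises only when $\sH = R_1$ with $k = 2$, where the conclusion is already witnessed directly by the $|X|=4$ quasi-simply-ordered coloring $\varphi_{R_1}$ described in Subsection \ref{sec:_r_1_polychromatic_q_1_}.
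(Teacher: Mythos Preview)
Your proposal is correct and follows essentially the same route as the paper: pick $v,x\in S$ and $u,y\in T$ with the required cross-pattern, recolor the $S$--$T$ edges so that $v,x,u,y$ each have monochromatic degree $n-2$, and observe that colors $1$ and $2$ survive via these four vertices while colors $\geq 3$ survive because no recolored edge carried such a color under $c$.

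Your write-up is in fact more careful than the paper's in two places. First, you use the $K_{m,m}$ decomposition of property~\ref{q>1-last} to exhibit $v,u,x,y$ explicitly, whereas the paper simply asserts their existence. Second, you allow a generic third color $t_0$ rather than the literal label ``$3$'', and you pause over the case $k=2$. That last worry is actually moot in context: the claim sits inside Lemma~\ref{structurelemma}, whose standing hypothesis is that no optimal $\sH$-polychromatic coloring has monochromatic degree $n-1$; for $\sH=R_1$ with only two colors the simply-ordered coloring with $|M_1|=2$ already has a vertex of monochromatic degree $n-1$, so the hypothesis forces $k\geq 3$ (and $C_1^*$ has $k\geq 3$ by definition). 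Thus the paper's bare ``color~$3$'' is legitimate, and your appeal to $\varphi_{R_1}$, while valid, is not needed.
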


		% \begin{claim}\label{structure-alt}
		% 	If $X=\emptyset$ and $q=1$, then the vertex set can be partitioned into $S,T,W$ such that
		% 	\begin{enumerate}
		% 		\item\label{structure-alt-1} $S$ is the set of all $(1,2)$-max-vertices
		% 		\item $T$ is the set of all $(2,1)$-max vertices
		% 		\item $W$ has no max vertices
		% 		\item If $\abs{S}=s$ and $\abs{T}=t$ then $s=t=2m$ where each vertex in $S$ is incident to $m$ edges to $T$ of color $1$ and $m$ edges to $T$ of color $2$.  The same for $T$ to $S$.  Internal edges in $S$ have color $1$ and internal edges in $T$ have color $2$.
		% 		\item\label{structure-alt-5} All edges from $S$ to $W$ have color $1$ and from $T$ to $W$ have color $2$.
		% 	\end{enumerate}
		% \end{claim}
		%
		% \begin{claim}\label{structure-alt-small}
		% 	The edges of color $2$ in the complete bipartite graph $(S,T)$ do not form a connected graph so the structure is $S=A_1\cup A_2$ and $T=B_1\cup B_2$ where $A_1\cup B_1$ and $A_2\cup B_2$ form complete bipartite graphs with edges of color $2$ and $A_1\cup B_2$ and $A_2\cup B_1$ form complete bipartite graphs with edges of color $1$.
		% 	\begin{proof}
		% 		I think this is also in the ``New Lemma''.  If they do then modifying $H$, a subgraph where $uv$ is the only edge of color 2 produces a subgraph with no edge of color $2$.
		% 	\end{proof}
		% \end{claim}

		We remark that a coloring $c$ satisfying properties \eqref{q>1-first}--\eqref{q>1-last} of Claim \ref{q>1-XnotEmpty} with $s=t>2$ is actually not $R_1$-polychromatic.  To see this, let $S_1 \cup T_1$ and $S_2\cup T_2$ be the vertex sets of the two copies of $K_{m,m}$ of edges of color 1 ($S_1\cup S_2 = S$, $T_1\cup T_2=T$) where $v\in S_1, u\in T_2$ and $uv$ is the only edge of color 2 in $H\in R_1$.  The subgraph $M$ of $H$ in the proof of Claim \ref{q>1-XnotEmpty} has only one component (since $s-(t-q)=1$), a path $d w_1 w_2\ldots w_e z$ where $d\in S_1$, $z\in T_1$, and $\{w_1,w_2,\ldots,w_e\}\subseteq W$.  To construct a 2-regular subgraph with no edges of color 2 spanning $n-1$ vertices, remove a vertex $x$ in $T_2$ from one of the two $s$-cycles of edges of color 1.  If $y_a$ and $y_b$ are the two vertices in $S_2$ adjacent to $x$ in the $s$-cycle, attach the path $w_1 w_2 \ldots w_e$ to $y_a$ and $y_b$ to get a 2-regular subgraph with no edge of color 2 spanning $n-1$ vertices.  However, this construction cannot be done when $m=1$, so in this case you do get an $R_1$-polychromatic coloring.

		\begin{lem}\label{XoToO}
			Let $\sH\in\{R_q(n),C^*_q(n)\}$.  
			\begin{enumerate}[label=\alph*)]
				\item Suppose for some $X\neq \emptyset$ there exists an optimal $X$-ordered $\sH$-polychromatic coloring of $K_n$.  Then there is one which is ordered.
				\item Suppose there exists an optimal $Z$-quasi-ordered $\sH$-polychromatic coloring of $K_n$.  Then there is one which is quasi-ordered
			\end{enumerate}
			\begin{proof}
				Among all such $\sH$-polychromatic colorings we assume $\varphi$ is one such that
				\begin{enumerate}[label=\alph*)]
					\item\label{XoToO-a} if $\varphi$ is $X$-ordered then $X$ has maximum possible size
					\item\label{XoToO-b} if $\varphi$ is $Z$-quasi-ordered then the restriction of $\varphi$ to $V(K_n)\setminus Z$ is $T$-ordered for the largest possible subset $T$ of $V(K_n)\setminus Z$.  In this case, we let $X=Z\cup T$ so $\varphi$ is nearly $X$-ordered (one or two edges could be recolored to make it $X$-ordered).
				\end{enumerate}
				For both \ref{XoToO-a} and \ref{XoToO-b} we assume that $\varphi$ is such that its restriction to $G_m=K_n[Y]$ has a vertex $v$ of maximum possible monochromatic degree in $G_m$, where $Y=V(K_n)\setminus X$, $\abs{Y}=m$, and the degree of $v$ in $G_m$ is $d<m-1$ (if $d=m-1$ then $\abs{X}$ is not maximal).
				
				Since $v$ has maximum monochromatic degree $d$ in $G_m$, by Lemma \ref{max-vertex} it is a $(1,2)$-max vertex in $G_m$, for some colors 1 and 2, and if $u\in Y$ is such that $\varphi(uv)=2$, then $u$ is a $(2,t)$-max vertex for some color $t$ (perhaps $t=1$).
				
				As before, let $y_1,y_2,\ldots,y_d$ be vertices in $Y$ such that $c(vy_i)=1$ for $i=1,2,\ldots,d$.  As before, let $H\in\sH$ be such that $uv$ is its only edge with color 2.  Let $H'$ be a cyclic orientation of the edges of $H$ such that $\vv{uv}$ is an arc, and let $w_i$ be the predecessor of $y_i$ in $H'$ for $i=1,2,\ldots,d$.  As shown before, $c(w_i v)=2$ for $i=1,2,\ldots,d$.
				
				Suppose there is an edge of $H$ which has one vertex in $X$ and one in $Y$.  Then there exist $w\in Y$ and $x\in X$ such that $\vv{wx}\in H'$.  Certainly $w$ is  not the predecessor in $H'$ of any $y_i$ in $Y$.  Since $\varphi$ is $X$-constant and $uv$ is the only edge of color 2 in $H$, $\varphi(xv)=\varphi(xw)\neq 2$.  Now twist $xw,uv$ in $H$.  Since $\varphi(xv)\neq 2$, we must have $\varphi(wu)=2$, so $u$ is incident in $G_m$ to at least $d+1$ vertices of color 2, a contradiction \Lightning. Hence $H$ cannot have an edge with one vertex in $X$ and one in $Y$.
		
				Now suppose $x\in X$ and $x\not\in H$.  If $\varphi(xv)=\varphi(xu)\neq 2$ then $H\setminus\{uv\}\cup\{ux,xv\}$ is an $(n-q+1)$-cycle with no edge of color 2, which is clearly impossible if $\sH=R_q(n)$, and is impossible if $\sH=C^*_q(n)$ by Theorem \ref{theorem:six}.  Hence $\varphi(xv)=\varphi(xu)=2$ for each $x\in X$.

				Since $u$ is a $(2,t)$-max vertex for some color $t\neq 2$, we can repeat the above argument with $u$ in place of $v$.  That shows that $\varphi(xv)=\varphi(xu)=t$ for each $x\in X$, which is clearly impossible.

				It remains to consider the possibility that $\sH=R_q(n)$ and $X$ is spanned by a union of cycles in $H$.  Suppose $xz$ is an edge of $H$ contained in $X$.  Then we can twist $xz$ and $uv$ to get another subgraph in $R_q$ and, unless either $x$ or $z$ has main color $2$, this subgraph has no edge of color $2$.  Hence at least half the vertices in $X$ have main color $2$ (and more than half would if $H$ had an odd component in $X$).

				The above argument can be repeated with $u$ in place of $v$.  If $u$ is a $(2,t)$-max-vertex then that would show that at least half the vertices in $X$ have main color $t\neq 2$.  So each vertex in $X$ has main color $2$ or $t$.  Since $\varphi$ is $X$-ordered or nearly $X$-ordered, some vertex $x\in X$ has monochromatic degree $n-2$ or $n-1$ and the main color of $x$ must be 2 or $t$. Assume it is $2$.  Then every cycle containing $x$ has an edge with color 2, contradicting the assumption that $H$ has only one edge with color 2.  Similarly, we get a contradiction if the main color of $x$ is $t$.  We have shown there is no vertex $v$ with monochromatic degree $d<m-1$, so $\varphi$ is ordered or quasi-ordered.
			\end{proof}
		\end{lem}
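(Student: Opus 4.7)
The plan is to mimic the extremal-choice-plus-twist pattern used throughout the section. Start with an optimal $\sH$-polychromatic coloring $\varphi$ that is $X$-ordered (for part (a)) or $Z$-quasi-ordered with the restriction to $V(K_n)\setminus Z$ being $T$-ordered for some $T$ (for (b), in which case set $X := Z\cup T$). Among all such colorings, choose $\varphi$ so that $|X|$ is as large as possible, and, subject to that, so that the graph $G_m := K_n[Y]$, where $Y := V(K_n)\setminus X$ and $|Y|=m$, has a vertex $v$ of largest possible monochromatic degree $d$. If $d = m-1$, then $v$ could be appended to the ordering on $X$, contradicting the maximality of $|X|$; so $d < m-1$ and some $u \in Y$ satisfies $\varphi(uv) = 2$ (calling $v$'s majority color in $G_m$ color $1$). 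By Lemma \ref{max-vertex}, which applies in both the ordered and the nearly $X$-ordered settings, $v$ is a $(1,2)$-max vertex in $G_m$ and $u$ is a $(2,t)$-max vertex in $G_m$, and there exists $H \in \sH$ for which $uv$ is the only edge of color $2$.

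Next I would extract structural obstructions from how $H$ interacts with $X$. Orient $H$ so that $\vv{uv}$ is an arc. If $\vv{wx}$ is an arc of $H$ with $w \in Y$ and $x \in X$, then $X$-constancy gives $\varphi(xv) = \varphi(xw) \neq 2$, so twisting $xw$ with $uv$ yields a member of $\sH$ avoiding color $2$ unless $\varphi(wu) = 2$, in which case $u$ has more than $d$ color-$2$ neighbors in $G_m$, contradicting the choice of $d$. Hence $H$ has no edge crossing from $X$ to $Y$. If some $x \in X$ is missing from $V(H)$, then $(H\setminus\{uv\})\cup\{ux,xv\}$ is an $(n-q+1)$-vertex $2$-regular subgraph (or $(n-q+1)$-cycle) with no color-$2$ edge, impossible for $R_q$ directly and for $C^*_q$ by Theorem \ref{theorem:six}. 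Hence $X \subseteq V(H)$.

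For $\sH = C^*_q$ this already gives a contradiction, since a single cycle containing $X$ and the edge $uv \subseteq Y$ cannot avoid all $X$-$Y$ edges when $X\neq\emptyset$. For $\sH = R_q$ the edges of $H$ lying inside $X$ span $X$ as a disjoint union of cycles; twisting any such edge $xz$ with $uv$ produces another member of $R_q$ that avoids color $2$ unless one of $x,z$ has main color $2$, so at least half of $X$ has main color $2$. Because $u$ is $(2,t)$-max, the same argument applied with $u$ in the role of $v$ and a witness $H_t\in R_q$ having a unique color-$t$ edge at $u$ shows at least half of $X$ has main color $t$; hence every $x \in X$ has main color in $\{2,t\}$. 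Finally, since $\varphi$ is $X$-ordered or nearly $X$-ordered, there is a vertex $x\in X$ of monochromatic degree $n-2$ (or $n-1$); any $2$-regular subgraph containing $x$ must contain an edge at $x$ of its main color, contradicting the uniqueness of $uv$ or of the color-$t$ edge at $u$ in $H_t$.

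The main obstacle I anticipate is the bookkeeping in the quasi-ordered case (b), where the three- or four-vertex set $Z$ sits inside $X$ and the ``main color'' terminology for vertices of $Z$ must be reconciled with the ordering on $T$. The twisting steps and Lemma \ref{max-vertex} carry the core contradictions cleanly; the delicate part is verifying that the degree-maximal vertex $x \in X$ used in the last step still exists and still has main color forced into $\{2,t\}$, and that one can symmetrically exhibit a witness $H_t$ for the role played by $H$, so the contradictions for the two main colors are on equal footing.
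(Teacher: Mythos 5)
Your proposal is correct and follows essentially the same route as the paper's proof: the same extremal choice of $\varphi$ (maximal $|X|$, then maximal monochromatic degree in $G_m$), the same appeal to Lemma \ref{max-vertex} and a witness $H$ with a unique color-$2$ edge, the same twist arguments ruling out $X$--$Y$ edges of $H$ and vertices of $X$ outside $H$, and the same final contradiction for $R_q$ via the main colors $2$ and $t$ on $X$ and a vertex of $X$ of monochromatic degree $n-2$ or $n-1$. The points you flag as delicate (existence of the symmetric witness $H_t$ and of the high-degree vertex in $X$) are exactly the ones the paper resolves by the same maximality and ($X$-)orderedness considerations you invoke.
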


Now there is not much left to do to prove Theorems \ref{theorem:two}, \ref{theorem:three}, and \ref{theorem:four}.

\subsection{Proof of Theorem \ref{theorem:four}} % (fold)
\label{sec:proof_of_theorem_ref_theorem_four}

Theorem \ref{theorem:five} takes care of the case of $C_q$-polychromatic colorings when $q\geq 2$ and $n\in[2q+2,3q+2]$.  The smallest value of $n$ for which there is a simply-ordered $C_q$-polychromatic 2-coloring is $n=3q+3$ (the coloring $\varphi_{C_q}$ in Section \ref{subsec:_k_hc_polychromatic_coloring}).  Hence if $q\geq 2$ and $\pcq\leq2$ then there exists an optimal simply-ordered $C_q$-polychromatic coloring except if $n-q$ is odd and $n\in[2q+2,3q+2]$, or if $q=2$ and $n=5$ (the coloring of $K_5$ with two monochromatic 5-cycles has no monochromatic 3-cycle).  So we need only consider $\sH\in\{R_q(n),C^*_q(n)\}$ (when $q\geq 2$).  Since \ref{structure-one} is not satisfied in Lemma \ref{structurelemma}, there exists an optimal $\sH$-polychromatic coloring with maximum monochromatic degree $n-1$.  That means it is $X$-ordered, for some nonempty set $X$, so by Lemma \ref{XoToO} there exists an optimal $\sH$-polychromatic coloring which is ordered, and then, by Lemma \ref{O2SO}, one which is simply-ordered.\hfill\qedsymbol

% subsection proof_of_theorem_ref_theorem_four (end)

\subsection{Proof of Theorem \ref{theorem:two}} % (fold)
\label{sec:proof_of_theorem_ref_theorem_two}

If $\sH\in\{R_0(n),C_0(n)\}$ then, by Lemma \ref{structurelemma}, there exists an optimal $\sH$-polychromatic coloring which is $Z$-quasi-ordered with $\abs{Z}=3$.  Then, by Lemma \ref{XoToO}, there exists one which is quasi-ordered and then, by Lemma \ref{O2SO}, one which is quasi-simply-ordered with $\abs{Z}=3$, so recoloring one edge would give a simply-ordered coloring.\qedsymbol

% subsection proof_of_theorem_ref_theorem_two (end)

\subsection{Proof of Theorem \ref{theorem:three}} % (fold)
\label{sec:proof_of_theorem_ref_theorem_three}
	
	Exactly the same as the proof of Theorem \ref{theorem:two}, except now $\abs{Z}=4$, so two edges need to be recolored to get a simply-ordered coloring.

\section{Polychromatic cyclic Ramsey numbers} % (fold)
\label{sec:polychromatic_cyclic_ramsey_numbers}

Let $s, t$, and $j$ be integers with $t\geq 2, s\geq 3, s\geq t$, and $1\leq j\leq t-1$.  We define $\cyram(s,t,j)$ to be the smallest integer $n$ such that in any $t$-coloring of the edges of $K_n$ there exists an $s$-cycle that uses at most $j$ colors. Erd{\H{o}}s and Gy\'{a}rf\'{a}s \cite{Erdos:1997} defined a related function for cliques instead of cycles.  So $\cyram(s,t,1)$ is the classical $t$-color Ramsey number for $s$-cycles and $\cyram(s,2,1)=c(s)$, the function in Theorem \ref{FS}.  While it may be difficult to say much about the function $\cyram(s,t,j)$ in general, if $j=t-1$ we get $\cyram(s,t,t-1)=\pr_t(s)$ the smallest integer $n\geq s$ such that in any $t$-coloring of $K_n$ there exists an $s$-cycle that does not contain all $t$ colors.  This is the function of Theorem \ref{extension} if $t\geq 3$, while $\pr_2(s)=c(s)$.

\subsection{Proof of Theorem \ref{extension}} % (fold)
\label{sec:proof_of_theorem_extension}

	Let $q\geq 0, s\geq 3$, and $n$ be integers with $n=q+s$. Assume $q\geq 2$. By Theorem \ref{theorem:four} and the properties of the coloring $\varphi_{C_q}$ (see Section \ref{subsec:_k_hc_polychromatic_coloring}), there exists a $C_q$-polychromatic $t$-coloring of $K_n$ if and only if
	\begin{align*}
		q+s&=n\geq (2^t -1)q+2^{t-1}+1,\\
		s&\geq(2^t-2)q+2^{t-1}+1,\\
		q&\leq\frac{s-2^{t-1}-1}{2^t -2} = \frac{s-2}{2^t -2}-\frac{1}{2}
	\end{align*}
	Since $q\geq 2$, we want to choose $s$ so that the right-hand side of the last inequality is at least 2, so
	\begin{align*}
		s-2&\geq \frac{5}{2}(2^t -2) = 5\cdot 2^{t-1}-5\\
		s&\geq 5\cdot 2^{t-1}-3
	\end{align*}
	
	So if $s\geq 5\cdot 2^{t-1}-3$, then the smallest $n$ for which there does not exist a $C_q$-polychromatic $k$-coloring is $n=q+s$ where $q>\frac{s-2}{2^t-2}-\frac{1}{2}$, so $n=s+\left\lfloor \frac{s-2}{2^t-2} +\frac{1}{2} \right\rfloor=s+\round\left(\frac{s-2}{2^t-2}\right)$.
	
	We note that if $s\geq 5\cdot 2^{t-1}-3$ then $\round\left( \frac{s-2}{2^t-2} \right)\geq \round\left( \frac{5}{2} \right)=3$, so $\pr_t(s)\geq s+3$ if $s\geq 5\cdot 2^{t-1}-3$.
	
	Now we assume that $\pr_t(s)=s+2$.  So $s+2$ is the smallest value of $n$ for which in any $t$-coloring of the edges of $K_n$ there is an $s$-cycle which does not have all colors, which means there is a polychromatic $t$-coloring when $n=s+1$.  Since $q=1$ in such a coloring, by Theorem \ref{theorem:three} and the properties of the coloring $\varphi_{C_1}$, $n\geq 5\cdot 2^{t-2}$.  Hence if $s\in[5\cdot 2^{t-2}-1,5\cdot 2^{t-1}-4]$, then $\pr_t(s)=s+2$.
	
	Now we assume that $\pr_t(s)=s+1$.  So $n-s$ is the largest value of $n$ such that in any $t$-coloring of $K_n$, every $s$-cycle gets all colors.  So $q=n-s=0$ and, by Theorem \ref{theorem:two} and properties of the coloring $\varphi_{C_0}$, $n\geq 3\cdot 2^{t-3}+1$.
	
	Finally, since the $t$-coloring $\varphi_{C_0}$ requires $n\geq 3\cdot 2^{t-3}+1$ where $t\geq 4$ if $n\leq 3\cdot 2^{t-3}$ and $t\geq 4$, then in any $t$-coloring of $K_n$, some Hamiltonian cycle will not get all colors, so $\pr_t(s)=s$ if $3<s\leq3\cdot2^{t-3}$.
% subsection proof_of_theorem_extension (end)

% section polychromatic_cyclic_ramsey_numbers (end)

\section{Conjectures} % (fold)
\label{sec:conjectures_and_closing_remarks}

We mentioned that we have been unable to prove a result for 2-regular graphs analogous to Theorem \ref{theorem:six} for cycles.  In fact we think it even holds for two colors, except for a few cases with $j$ and $n$ small.

\begin{conj}\label{2-regular_conjecture}
	Let $n\geq 6$ and $j$ be integers such that $3\leq j <n$, and if $j=5$ then $n\geq 9$, and let $\varphi$ be an edge-coloring of $K_n$ so that every 2-regular subgraph spanning $j$ vertices gets all colors.  Then every 2-regular subgraph spanning at least $j$ vertices gets all colors under $\varphi$.
\end{conj}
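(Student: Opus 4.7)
The plan is to argue by contradiction via a minimum counterexample. Suppose $\varphi$ satisfies the hypothesis of the conjecture but some 2-regular subgraph $H$ spanning $m \geq j$ vertices misses some color $t$; choose such an $H$ with $m$ minimum. The hypothesis forces $m > j$, and every 2-regular subgraph spanning any number of vertices from $j$ up to $m-1$ must contain color $t$.

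The argument splits on the number of colors of $\varphi$. When $\varphi$ uses at least three colors, Theorem \ref{theorem:six} gives immediate leverage: since every $j$-cycle is a $j$-vertex 2-regular subgraph, every $j$-cycle uses all colors, so every cycle of length at least $j$ uses all colors. Hence no single component of $H$ is a cycle of length $\geq j$, and $H = C_1 \cup \cdots \cup C_k$ with $k \geq 2$ and each $|C_i| = \ell_i < j$. I would then reduce $m$ by three local operations: (i) \emph{shortening} a cycle $C_i$ of length $\ell_i \geq 4$ by deleting a vertex and joining its two cycle-neighbors --- the result is a 2-regular subgraph on $m-1 \geq j$ vertices, so minimality forces the new chord edge to be of color $t$, and iterating yields many color-$t$ chords on $V(C_i)$; (ii) \emph{triangle removal}, which deletes three vertices at once and by minimality forces $m - 3 < j$, so $m \leq j+2$ whenever $H$ contains a triangle component; and (iii) \emph{splicing} two or more components into a single cycle on $\ell_{i_1} + \cdots + \ell_{i_r}$ vertices using non-color-$t$ bridge edges --- if this combined length is at least $j$ and the combined cycle avoids color $t$, it contradicts Theorem \ref{theorem:six}.

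The principal obstacle is the splicing lemma: any $k \geq 2$ vertex-disjoint cycles whose edges all avoid color $t$ admit a Hamilton cycle on the union of their vertex sets that also avoids color $t$. I would attack this by assuming otherwise --- so color-$t$ edges dominate the bipartite (or multipartite) graph between the components --- and combining that abundance with the color-$t$ chords produced by step (i) to directly build a $j$-vertex 2-regular subgraph all of whose edges are color $t$, contradicting the hypothesis applied to that $j$-vertex subgraph. The hardest sub-case is when $H$ consists entirely of triangles: step (i) gives no information, and (ii) combined with $m \geq j$ forces $m \in \{j, j+1, j+2\}$ with $m$ a multiple of $3$, leaving few configurations, which should succumb to direct case analysis using the rigid structure forced by the hypothesis on pairs and triples of triangles (for instance, any $6$- or $9$-vertex combination that admits a single rainbow-free cycle contradicts Theorem \ref{theorem:six}).

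For the two-color case, the hypothesis forbids monochromatic 2-regular subgraphs on exactly $j$ vertices, and we must show it forbids one on any $m \geq j$. When $H$ is itself a single cycle, this is the classical Ramsey problem for cycles, handled by the theorem of Faudree and Schelp (Theorem \ref{FS}); the hypotheses $n \geq 6$ and $n \geq 9$ (for $j = 5$) are precisely the thresholds at which $R(C_j, C_j)$ exceeds $j$, so existence of a monochromatic cycle of length $\geq j$ forces one of length exactly $j$. When $H$ has multiple monochromatic cycle components, the splicing step (iii) is much easier in the two-color setting since the non-color-$t$ edges form a single monochromatic class, and standard Hamiltonicity results (Dirac/Ore type, or direct use of Theorem \ref{Jackson}) yield a single monochromatic cycle of length $m$, reducing to the cycle case.
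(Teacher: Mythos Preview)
The statement you are attempting to prove is Conjecture~\ref{2-regular_conjecture}, and the paper offers no proof of it whatsoever. The authors explicitly write that they ``have been unable to prove a result for 2-regular graphs analogous to Theorem~\ref{theorem:six} for cycles,'' and they list it among their open problems. So there is no ``paper's own proof'' to compare against; any complete argument here would be a new result.

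Your proposal is a plan, not a proof, and you correctly identify where the real difficulty lies: the splicing lemma. You assert that if splicing fails then color-$t$ edges ``dominate'' between components and that, combined with the color-$t$ chords from shortening, one can assemble a monochromatic $j$-vertex $2$-regular subgraph---but you give no mechanism for actually doing this. The chord information from step~(i) tells you that certain specific chords of each $C_i$ are color~$t$; it does not give you a color-$t$ Hamilton path or even a long color-$t$ path inside $V(C_i)$, and stitching together scattered color-$t$ edges across several components into a $2$-regular subgraph on \emph{exactly} $j$ vertices is precisely the kind of delicate construction that makes this conjecture hard. The all-triangles sub-case you flag is not a minor leftover; it is one of the genuinely obstructive configurations.

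Your two-color reduction is also not correctly formulated. Faudree--Schelp (Theorem~\ref{FS}) gives $R(C_j,C_j)$; it does \emph{not} say that a monochromatic cycle of length $m\geq j$ forces a monochromatic $j$-cycle. What the Ramsey threshold actually buys you is that for small $j$ and $n$ above the stated bounds the hypothesis is vacuous (any $2$-coloring of $K_n$ already contains a monochromatic $C_j$), but for larger $j$ the hypothesis is not vacuous and you still need an argument. In short, the outline is reasonable as a strategy, but the splicing step is the whole problem, and nothing here closes it.
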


This does not hold for $j=3, n=4$, and 3 colors; $n=5, j=3$, and 2 colors.

We can extend the notions of $Z$-quasi-ordered, quasi-ordered, and quasi-simply-ordered to sets $Z$ of larger size, allowing a main color to have degree less than $n-2$.  Let $q\geq 0$ and $r\geq 1$ be integers such that $q\leq 2r-3$.  Hence $\frac{2r-2}{q+1}\geq 1$, and we let $k=\left\lfloor \frac{2r-2}{q+1} \right\rfloor+1\geq 2$ and $z=k(q+1)$.  Let $Z$ be a set of $z$ vertices.  We define a \emph{seed-coloring} $\varphi$ with $k$ colors on the edges of the complete graph $K_z$ with vertex set $Z$ as follows.  Partition the $z$ vertices into $k$ sets $S_1,S_2,\ldots,S_k$ of size $q+1$.  For $j=1,2,\ldots,k$, all edges within $S_j$ have color $j$, all edges between $S_i$ and $S_j$ ($i\neq j$) have color $i$ or $j$, and for each $j$ and each vertex $v$ in $S_j$, $v$ is incident to $\left\lceil \frac{(q+1)(k-1)}{2} \right\rceil$ or $\left\lfloor \frac{(q+1)(k-1)}{2} \right\rfloor$ edges with colors other than $j$ (so, within round off, half of the edges from each vertex in $S_j$ to vertices in other parts have color $j$).  We say each vertex in $S_j$ has main color $j$.

If $n\geq z$, we get a $Z$-quasi-ordered coloring $c$ of $K_n$ which is an extension of the coloring $\varphi$ on $Z$ if for each $j$ and each $v\in S_j$, $c(vy)=j$ for each $y\in V(K_n)\setminus Z$.  If $c$ is $Z$-quasi-ordered then it is quasi-ordered if $c$ restricted to $V(K_n)\setminus Z$ is ordered, and quasi-simply-ordered if $c$ restricted to $V(K_n)\setminus Z$ is simply-ordered.

% Let $k,q,r,$ and $z$ be integers such that $q\geq 0$, $k,q,r\geq 1$, not both $q$ and $k$ even, $r=\frac{(q+1)(k-1)}{2}+1$, and $z=k(q+1)$. We obtain a quasi-complete coloring of $K_z$ with $k$ colors as follows. Partition the $z$ vertices into $k$ sets $S_1,S_2,\ldots,S_k$ of size $q+1$.  For $j=1,2,\ldots,k$ all edges within the set $S_j$ have color $j$, all edges between $S_i$ and $S_j$ ($i\neq j$) have color $i$ or $j$, and for each $v\in S_j$, $v$ is incident to precisely $z-r$ edges of color $j$ ($q$ of them within $S_j$).  For each $v$ in $S_j$, we say the main color of $v$ is $j$.  If $n\geq z$ we get a $Z$-quasi-ordered coloring $\varphi$ of $K_n$ if $Z$ is colored as described above and for each $v\in S_j$ and $y\in V(K_n)\setminus Z$, $c(vy)=j$. We extend the definitions of quasi-ordered and quasi-simply-ordered as before.  A $Z$-quasi-ordered coloring $\varphi$ of $K_n$ is quasi-ordered (with seed $Z$) if $\varphi$ restricted to $V(K_n)\setminus Z$ is ordered, and is quasi-simply-ordered (with seed $Z$) if $\varphi$ restricted to $V(K_n)\setminus Z$ is simply ordered.

If $r>0$ and $q\geq 0$ are integers we let $\mathscr{R}(n,r,q)$ be the set of all $r$-regular subgraphs of $K_n$ spanning precisely $n-q$ vertices (assume $n-q$ is even if $r$ is odd, so the set is nonempty), and if $r\geq 2$ let $\mathscr{C}(n,r,q)$ be the set of all such subgraphs which are connected.  

Since $k-1=\left\lfloor \frac{2r-2}{q+1} \right\rfloor\leq \frac{2r-2}{q+1}$, we have $r\geq\frac{(q+1)(k-1)}{2}+1>\left\lceil \frac{(q+1)(k-1)}{2} \right\rceil$.  So if $H$ is in $\mathscr{R}(n,r,q)$ or $\mathscr{C}(n,r,q)$, then $H$ contains an edge with each of the $k$ colors on edges within $Z$, because it contains at least one vertex in $S_j$ for each $j$, and fewer than $r$ of the edges incident to this vertex have colors other than $j$.  We can get an $\mathscr{R}(n,r,q)$-polychromatic or $\mathscr{C}(n,r,q)$-polychromatic quasi-simply-ordered coloring of $K_n$ with $m>k$ colors by making the color classes $M_t$ on the vertices in $V(K_n)\setminus Z$ for $t=k+1,k+2,\ldots,m$ sufficiently large.  If $H\in\mathscr{R}(n,r,q)$, for each $t\in[k+1,m]$ we will need the size of $M_t$ to be at least $q+1$ more than the sum of the sizes of all previous color classes, while if $H\in\mathscr{C}(n,r,q)$ we will need the size of $M_t$ to be at least $q$ more than the sum of the sizes of all previous classes, with an extra vertex in $M_m$.  To try to get optimal polychromatic colorings we make the sizes of these color classes as small as possible, yet satisfying these conditions.

For example, if $r=2$ and $q=0$ then $k=\left\lfloor \frac{2r-2}{q+1} \right\rfloor +1=3$ and $z=k(q+1)=3$, and we get the quasi-simply-ordered colorings $\varphi_{R_0}$ and $\varphi_{C_0}$ with $\abs{Z}=3$ of Theorem \ref{theorem:two}.  If $r=2$ and $q=1$ then $k=2$ and $z=4$, and we get the colorings $\varphi_{R_1}$ and $\varphi_{C_1}$ with $\abs{Z}=4$ of Theorem \ref{theorem:three}.

\begin{example}[$r=3,q=0$, so $k=5,z=5$]\label{previous-example}
	Let $\varphi$ be the edge coloring obtained where $\{v_1,v_2,v_3,v_4,v_5\}=Z$ such that $v_i v_{i+1}$ and $v_i v_{i+2}$ ($\operatorname{mod} 5$) have color $i$.  The edges connecting $v_i$ to the remaining vertices in $V(K_n)\setminus Z$ are color $i$.  See Figure \ref{fig:kFrpoly}.
	% Let $n$ be such that $r\cdot 2^{k-2r+2}-1\leq n <r\cdot 2^{k-2r+3}-1$.  Let $c$ be the simply-ordered edge coloring with vertex order $v_1,v_2,\ldots,v_n$ whose inherited simply-ordered vertex coloring $c'$ has successive color class sizes $w_1,w_2,\ldots, w_k$ where
% 	\begin{align*}
% 		w_j &= 1 \textrm{ if } j\in[2r-1]\\
% 		w_j &= 1+\sum_{i=1}^{j-1} w_i \textrm{ if } j\in[2r,k-1]\\
% 		w_k &= n-\sum_{i=1}^{k-1}w_i
% 	\end{align*}
%
% 	Let $\CRF$ be the edge coloring obtained from $c$ by recoloring the edges in the complete subgraph induced by $v_1,v_2,\ldots,v_{2r-1}$ according to
% 	\[
% 		\CRF(v_i v_{i+t})\equiv i \mod 2r-1
% 	\]
% 	for all $i\in[2r-1]$ and $t\in[r-1]$.  The $r-1$ edges of color $i$ form a star with apex $v_i$ for each $i$ in $[2r-1]$.  It is not hard to check that $\CRF$ is an \XCSO $r$F-polychromatic coloring where $X=\{v_1,v_2,\ldots,v_{2r-1}\}$.  See Figure \ref{fig:kFrpoly}.

	\begin{figure}[htbp]
		\centering
		\begin{tikzpicture}[mystyle/.style={draw,shape=circle,fill=white}]
			\def\ngon{5}
			\node[regular polygon,regular polygon sides=\ngon,minimum size=3cm] (p) {};
			\foreach\x in {1,...,\ngon}{\node[mystyle] (p\x) at (p.corner \x){$v_{\x}$};}
			\foreach\i in {1,...,\ngon} {
				\foreach\t in {1,2}{
				\pgfmathsetmacro{\j}{int(mod(\i-1+\t,\ngon)+1)}
					\draw (p\i) --node {\i} (p\j);
				}
			}
		\foreach \i in {2,...,5}{
					\pgfmathsetmacro{\angle}{int(18+72*\i)}
					\draw (p\i) --node {\i} (\angle:2.5);
				}
			\draw (p1) --node[left] {1} (90:2.5);
			\path [draw=black,fill=gray,fill opacity=.25,even odd rule] (0,0) circle (3.5) (0,0) circle (2.5);
			\node (graphlabel) at (0,3) {$V(K_n)\setminus Z$};
		\end{tikzpicture}
		\caption{The coloring for Example \ref{previous-example}.}
		\label{fig:kFrpoly}
	\end{figure}
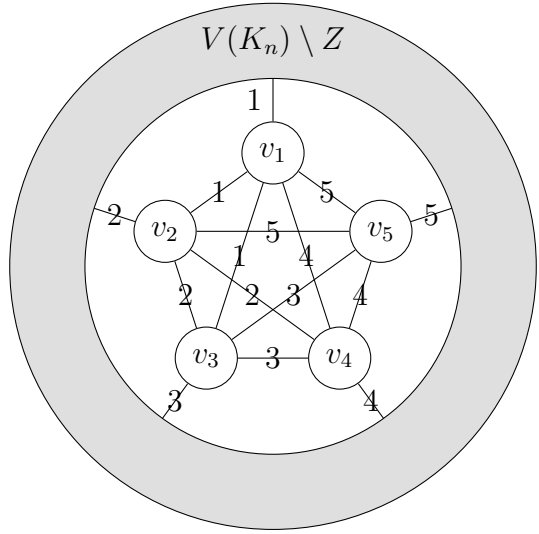
\end{example}

\begin{example}[$r=3,q=3,k=2,z=8$]
	$Z$ has two color classes, 4 vertices in each.  The complete bipartite graph between these two sets of vertices could have two vertex disjoint copies of $K_{2,2}$ of one color and also of the other color, or could have an 8-cycle of each color.
\end{example}

\begin{example}[$r=4,q=2,k=3,z=9$]
	So $S_1,S_2,S_3$ each have size $q+1=3$. One way to color the edges between parts is for $j=1,2,3$, each vertex in $S_j$ is incident with 2 edges of color $j$ to vertices in $S_{j+1}$ and 1 edge of color $j$ to a vertex in $S_{j-1}$ (so is incident with one edge of color $j+1$ and two edges of color $j-1$, cyclically).  The smallest value of $n$ for which this seed can generate a quasi-simply-ordered $\mathscr{R}(n,4,2)$-polychromatic coloring with 5-colors is $n=45$ (the 4$^{\rm{th}}$ and 5$^{\rm{th}}$ color classes would have sizes $9+2+1=12$ and $21+2+1=24$ respectively), while to get a simply-ordered $\mathscr{R}(n,4,2)$-polychromatic coloring with 5 colors you would need $n\geq 69$ (color class sizes $3,3,9,18,36$ works).
\end{example}

\begin{conj}
	Let $r\geq 1$ and $q\geq 0$ be integers such that $q\leq 2r-3$.  Let $k=\left\lfloor \frac{2r-2}{q+1} \right\rfloor+1\geq 2$ and $z=k(q+1)$.  If $n\geq z$ and $n-q$ is even if $r$ is odd, then there exist optimal quasi-simply-ordered $\mathscr{R}(n,r,q)$ and $\mathscr{C}(n,r,q)$-polychromatic colorings with seed $Z$ with parameters $r,q,k,z$.
\end{conj}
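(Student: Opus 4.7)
The plan is to mirror the proof strategy that handled the $r=2$ cases in Theorems \ref{theorem:two}--\ref{theorem:four}, lifting each ingredient to arbitrary $r$ with seed size $z=k(q+1)$. The overall argument has four layers, corresponding to the four main lemmas used in the existing proofs.

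First, I would prove a characterization result analogous to Lemma \ref{orderedPClemma}: for a quasi-simply-ordered coloring of $K_n$ with seed $Z$ of the prescribed form and inherited color classes $M_{k+1},\ldots,M_m$ on $V(K_n)\setminus Z$, I want to identify precisely when every subgraph in $\mathscr{R}(n,r,q)$ (respectively $\mathscr{C}(n,r,q)$) has an edge of each color. For colors $t\in[k]$ appearing in $Z$, the seed's structure already guarantees coverage, since any $H$ in the family meets each $S_j$ in a vertex that has fewer than $r$ edges of colors other than $j$. For colors $t\in[k+1,m]$, I expect threshold inequalities of the form ``$|M_t(j)|$ is large enough at some index $j$,'' derived by the counting argument used in Lemma \ref{orderedPClemma}: if no edge of color $t$ lies in $H$, then all edges of $H$ incident to $M_t$ go left, forcing a density upper bound in any initial segment that must fail somewhere. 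Corner cases reminiscent of those in part \ref{OPC-2F} of Lemma \ref{orderedPClemma} will arise for small $n-q$ or for the last color class, and must be listed carefully.

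Second, I would extend the block-shifting argument of Lemma \ref{O2SO} to show that any quasi-ordered polychromatic coloring can be made quasi-simply-ordered while preserving the seed and the number of colors; once the characterization is known, the same shift preserves every condition and reduces the block count. The heart of the proof, and my expected main obstacle, is the structural step analogous to Lemma \ref{structurelemma}. Starting with an optimal polychromatic coloring, pick one that is $X$-ordered (or nearly $X$-ordered) for $|X|$ maximal, and analyze the restriction to $Y=V(K_n)\setminus X$. A generalization of Lemma \ref{max-vertex}, using a subgraph $H$ that contains a unique edge of a given color, would show every max vertex in $Y$ has a highly restricted color profile. For $r\geq 3$ the simple two-edge twist used in Lemma \ref{max-vertex} must be replaced by more flexible local alterations that preserve $r$-regularity---for instance, swapping out a short alternating trail or applying an $r$-factor exchange---and the argument must produce enough such alterations to force the max vertices to partition into $k$ color classes $S_1,\ldots,S_k$ of size exactly $q+1$ with the prescribed inter-class edge pattern.

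The sharpness of the seed parameters $|S_j|=q+1$ and $k=\lfloor(2r-2)/(q+1)\rfloor+1$, and the need to rule out larger or differently-structured configurations, will likely require a generalization of Lemma \ref{disjoint_union}: a sharp characterization of when a bipartite or multipartite graph with prescribed degree constraints admits an $r$-regular (respectively connected $r$-regular) spanning subgraph avoiding a given color. This is where deeper $f$-factor theory---Tutte's $f$-factor theorem and Gallai--Edmonds-type structural results for regular factors---and, for the connected case, refinements of Theorem \ref{Rahman} on Hamiltonicity of regular bipartite graphs, are likely to enter. With this structural lemma in hand, analogs of Lemma \ref{XoToO} and Lemma \ref{O2SO} then combine, exactly as in the proofs of Theorems \ref{theorem:two}--\ref{theorem:four}, to produce an optimal quasi-simply-ordered polychromatic coloring with the seed $Z$ of the stated parameters.
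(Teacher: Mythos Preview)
The statement you are attempting to prove is a \emph{conjecture} in the paper, not a theorem; the paper contains no proof of it. The authors explicitly present it as open, noting only that the cases $r=1$ and $r=2$ follow from Theorems \ref{theorem:one}--\ref{theorem:four}, and that even the $r=2$ case for $\mathscr{R}(n,2,q)$ would require Conjecture \ref{2-regular_conjecture}, which is also open. So there is no ``paper's own proof'' to compare against.

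What you have written is a reasonable research outline that correctly identifies the four layers of the $r=2$ argument and proposes to lift each one. But it is not a proof, and you yourself flag the genuine obstruction: the structural step analogous to Lemma \ref{structurelemma}. In the $r=2$ case that lemma relies on the twist operation, which preserves $2$-regularity (and, with care, connectedness) after removing two disjoint edges and reinserting two others. For $r\geq 3$ there is no comparably cheap local move: deleting and reinserting a bounded number of edges while preserving $r$-regularity, connectedness, and the vertex count $n-q$ is a nontrivial $f$-factor problem, and your appeal to ``swapping out a short alternating trail or applying an $r$-factor exchange'' is exactly the missing idea, not a solution. Likewise, the analog of Lemma \ref{disjoint_union} you invoke---a sharp extremal characterization of when a multipartite graph with given degree constraints avoids a connected $r$-regular spanning subgraph---is not known and would itself be a substantial result. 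Until those two ingredients are supplied, the proposal remains a program rather than a proof, which is consistent with the paper leaving the statement as a conjecture.
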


It is not hard to check that each of these quasi-simply-ordered colorings does at least as well as a simply-ordered coloring for those values of $r$ and $q$.  The only quesiton is whether some other coloring does better and the conjecture says no.

What if $\frac{2r-2}{q+1}<1$? Then $k=\left\lfloor \frac{2r-2}{q+1} \right\rfloor+1=1$, which seems to be saying no seed $Z$ exists with at least 2 colors.

\begin{conj}
	Let $r\geq 1$ and $q\geq 0$ be integers with $q\geq 2r-2$, $n\geq q+r+1$, and not both $r$ and $n-q$ are odd.  Then there exists an optimal simply-ordered $\mathscr{R}(n,r,q)$-polychromatic coloring of $K_n$.  If $r\geq 2$ then there exists a $\mathscr{C}(n,r,q)$-polychromatic coloring of $K_n$ (unless $r=2$, $q\geq 2$, $n-q$ is odd, and $n\in[2q+2,3q+1]$).
\end{conj}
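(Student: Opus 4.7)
The plan is to adapt the three-step framework that established Theorems \ref{theorem:two}--\ref{theorem:four} in the $r=2$ case. The hypothesis $q\geq 2r-2$ is precisely the condition that forces $\lfloor(2r-2)/(q+1)\rfloor+1=1$, so in the seed-coloring formalism of Section \ref{sec:conjectures_and_closing_remarks} no nontrivial multicolored ``seed'' is available. We therefore expect every optimal polychromatic coloring to be genuinely ordered, not merely quasi-ordered, and the goal is to prove this directly and then convert it to simply-ordered by block-shifting.

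First, I would prove an analog of Lemma \ref{orderedPClemma} characterising $\mathscr{R}(n,r,q)$- and $\mathscr{C}(n,r,q)$-polychromatic ordered colorings via prefix-size conditions on the color classes $M_t$. The underlying observation is that if $\varphi$ is ordered and $H\in\mathscr{R}(n,r,q)$ avoids color $t$, then every color-$t$ vertex of $H$ has all $r$ of its $H$-neighbours strictly earlier in the order, and a count of left-going degree versus prefix size yields an inequality of the form $r|M_t(j)|\leq j+(r-1)q$, with boundary adjustments at $j=r+q$, at $j=n-r$, and at matched pairs $(j,j+2)$ analogous to part \ref{OPC-2F} of Lemma \ref{orderedPClemma}. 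The converse direction builds $H$ greedily by interleaving color-$t$ vertices with non-color-$t$ vertices, exactly as in the second halves of the proofs of parts \ref{OPC-1F} and \ref{OPC-2F}. For the connected family $\mathscr{C}(n,r,q)$ I would additionally invoke Jackson's theorem (Theorem \ref{Jackson}) and an $r$-regular analog of Theorem \ref{theorem:six} to guarantee connectedness of the constructed subgraph.

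Second -- and this is where the real work lies -- I would establish an analog of Lemmas \ref{max-vertex} and \ref{XoToO}: every optimal $\mathscr{R}(n,r,q)$- (resp.\ $\mathscr{C}(n,r,q)$-) polychromatic coloring has maximum monochromatic degree $n-1$. Following the proofs of Claims \ref{dsize}--\ref{q>1-XnotEmpty}, I would begin with a max-vertex $v$ of degree $d<n-1$ in the restriction to $V\setminus X$, an edge $vu$ of minority color $2$, and a witness $H\in\mathscr{R}(n,r,q)$ in which $vu$ is the unique color-$2$ edge. For each color-$1$ neighbour $y_i$ of $v$ in $H$, I would twist $vu$ with one of the $r$ edges of $H$ incident to $y_i$ to force $\varphi(uw_i)=2$; iterating and exploiting the max-vertex machinery yields that every max-vertex uses at most two colors and that a ``two disjoint $K_{m,m}$'s'' structure with $|S|=|T|=2m$ emerges (as in Claim \ref{q>1-XnotEmpty}). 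The main obstacle is to rule this configuration out: for $r\geq 2$ the twist-and-reroute bookkeeping -- using Lemma \ref{disjoint_union}, its higher-degree analogs, and Hall-type matching arguments in the bipartite color-$1$ graph between $S$ and $T$ -- constructs an $H''\in\mathscr{R}(n,r,q)$ with no color-$2$ edge provided $q$ is large enough to absorb the re-routing, which is precisely where $q\geq 2r-2$ is needed. In the connected case each twist must also preserve connectedness; the exceptional range $r=2$, $q\geq 2$, $n-q$ odd, $n\in[2q+2,3q+1]$ is exactly where Theorem \ref{theorem:five} forbids this, motivating the stated exception.

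Third, once an optimal coloring is known to be ordered, the block-shifting argument of Lemma \ref{O2SO} extends verbatim using the characterisation from the first step to produce a simply-ordered optimal coloring with the same number of colors, completing the proof. The side condition ``not both $r$ and $n-q$ are odd'' is only a parity requirement for $\mathscr{R}(n,r,q)$ to be nonempty and plays no other role.
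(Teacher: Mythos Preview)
The statement you are attempting to prove is a \emph{conjecture} in the paper, not a theorem; the paper offers no proof. Immediately after stating it, the authors remark only that Theorem~\ref{theorem:one} handles $r=1$ and that Theorem~\ref{theorem:four} handles $\mathscr{C}(n,2,q)$, while the $\mathscr{R}(n,2,q)$ case would follow \emph{if} Theorem~\ref{theorem:six} held for $2$-regular graphs---which is itself the content of the unproved Conjecture~\ref{2-regular_conjecture}. So there is nothing in the paper to compare your argument against beyond the $r\le 2$ special cases already covered by the main theorems.

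Your plan is a faithful outline of how one would \emph{try} to extend the $r=2$ machinery, but several of the ingredients you invoke are themselves open. Most seriously, your step one for $\mathscr{C}(n,r,q)$ appeals to ``an $r$-regular analog of Theorem~\ref{theorem:six}''; the paper explicitly flags the $2$-regular version of this as unresolved (Conjecture~\ref{2-regular_conjecture}), so assuming its $r$-regular extension is not legitimate. In step two, the twist argument in Lemma~\ref{max-vertex} and Claims~\ref{dsize}--\ref{q>1-XnotEmpty} relies critically on the fact that removing a single edge from a $2$-regular graph and re-routing through one other edge produces another $2$-regular graph or cycle; for $r\ge 3$ a single twist does not preserve $r$-regularity, and the ``iterating and exploiting the max-vertex machinery'' you describe would require controlling $r-1$ simultaneous re-routings at each step, which is where the argument genuinely breaks down rather than merely becoming more technical. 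The assertion that $q\ge 2r-2$ is ``precisely where'' the reroute bookkeeping succeeds is plausible heuristically but is not something the paper establishes, and you have not supplied the argument. In short, what you have written is a credible research programme, not a proof, and the paper does not claim otherwise.
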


Theorem \ref{theorem:one} says this conjecture is true for $r=1$.  Theorem \ref{theorem:four} says it is true for $\mathscr{C}(n,r,q)$ for $r=2$ and that it would be true for $\mathscr{R}(n,r,q)$ for $r=2$ if Theorem \ref{theorem:six} held for 2-regular graphs.

\bibliographystyle{plain}
\bibliography{Poly1HC2}
\end{document}